\theoremstyle{plain}
\newtheorem{theorem}{Theorem}
\newtheorem{corollary}[theorem]{Corollary}
\newtheorem{lemma}[theorem]{Lemma}
\newtheorem{proposition}[theorem]{Proposition}
\newtheorem*{theorem*}{Theorem}
\newtheorem*{corollary*}{Corollary}
\newtheorem*{proposition*}{Proposition}
\theoremstyle{definition}
\theoremstyle{remark}
\newtheorem{remark}[theorem]{Remark}
\newcommand\CC{{\mathbf C}}
\newcommand\RR{{\mathbf R}}
\newcommand\ZZ{{\mathbf Z}}
\newcommand\NN{{\mathbf N}}
\newcommand\BB{{\mathbf B}}
\newcommand\spr[1]{\langle#1\rangle}
\newcommand\hol{{\text{hol}}}
\renewcommand\Re{\operatorname{Re}}
\newcommand\Bn{{\BB^n}}
\newcommand\bn{{B^n}}
\newcommand\pbn{{\partial\bn}}
\renewcommand\dh{\Delta_h}
\newcommand\FF[5]{{}_#1\!F_#2\Big(\begin{matrix}#3\\#4\end{matrix}\Big|#5\Big)}
\newcommand\FE[4]{F_#1\Big(\begin{matrix}#2\\#3\end{matrix}\Big|#4\Big)}
\newcommand\Hh{$H$-harmonic }
\newcommand\GG[2]{\Gamma\Big(\begin{matrix}#1\\#2\end{matrix}\Big)}
\newcommand\chm{\mathcal H^m}
\newcommand\bhm{\mathbf H^m}
\newcommand\On{{O(n)}}
\newcommand\HH{{\mathcal H}}
\newcommand\dsph{\Delta_{\text{\rm sph}}}
\newcommand\cR{\mathcal R}
\newcommand{\vast}{\bBigg@{4}}
\newcommand{\Vast}{\bBigg@{5}}
\renewcommand\[{\begin{equation}}
\renewcommand\]{\end{equation}}
\renewcommand\AA{\mathcal A}
\newcommand\jedna{\mathbf 1}
\newcommand\XX{\mathcal X}
\newcommand\YY{\mathcal Y}
\newcommand\nbt{\nabla\!_t\,}
\newcommand\WW{\mathcal W}
\newcommand\bFF[5]{{}_#1\!{\mathbf F}_#2\Big(\begin{matrix}#3\\#4\end{matrix}\Big|#5\Big)}
\begin{document}

\title[Moebius invariant space]{A~Moebius invariant space of $H$-harmonic functions on the ball}
\author[P.~Blaschke]{Petr Blaschke}
\address{Mathematics Institute, Silesian University in Opava,
 Na~Rybn\'\i\v cku~1, 74601~Opava, Czech Republic}
 \email{Petr.Blaschke{@}math.slu.cz}
\author[M.~Engli\v s]{Miroslav Engli\v s}
\address{Mathematics Institute, Silesian University in Opava,
 Na~Rybn\'\i\v cku~1, 74601~Opava, Czech Republic {\rm and }
 Mathematics Institute, \v Zitn\' a 25, 11567~Prague~1,
 Czech Republic}
\email{englis{@}math.cas.cz}
\author{El-Hassan Youssfi}
\address{Aix-Marseille Universit\'e,
Institut de Math\'ematiques de Marseille (I2M) --- UMR 7373,
Site de Saint Charles, 3~place Victor Hugo, Case~19,
13331~Marseille C\'edex~3, France}
\email{el-hassan.youssfi{@}univ-amu.fr}
\thanks{Research supported by GA\v CR grant no.~21-27941S
 and RVO funding for I\v CO~67985840.}
\subjclass{Primary 31C05; Secondary 33C55, 32A36}
\keywords{\Hh function, hyperbolic Laplacian, Dirichlet space, reproducing kernel}
\begin{abstract} We~describe a Dirichlet-type space of $H$-harmonic functions,
i.e. functions annihilated by the hyperbolic Laplacian on~the unit ball of the real $n$-space,
as~the analytic continuation (in~the spirit of Rossi and Vergne) of the corresponding
weighted Bergman spaces. Characterizations in terms of derivatives are given,
and the associated semi-inner product is shown to be Moebius invariant.
We~also give a formula for the corresponding reproducing kernel.
Our~results solve an open problem addressed by M.~Stoll in his book 
``Harmonic and subharmonic function theory on the hyperbolic ball''
(Cambridge University Press, 2016).
\end{abstract}

\maketitle

\section{Introduction}
Let $\bn$ be the unit ball in the real $n$-space~$\RR^n$, $n>2$, equipped with the usual hyperbolic
metric $2(1-|x|^2)^{-1}\,|dx|$. The~associated hyperbolic Laplace operator
$$ \dh f(x) := (1-|x|^2) [(1-|x|^2)\Delta f(x) + 2(n-2) \spr{x,\nabla f(x)}]  $$
is invariant under the Moebius transformations of~$\bn$.
Functions on $\bn$ annihilated
by $\dh$ are called hyperbolic-harmonic, or \Hh for short.
The~\emph{weighted \Hh Bergman space}
$$ \HH_s(\bn) := \{ f\in L^2(\bn,d\rho_s): \;f\text{ is \Hh on }\bn\}  $$
consists of all \Hh functions on $\bn$ square-integrable with respect to the measure
$$ d\rho_s(x) := \frac{\Gamma(\frac n2+s+1)}{\pi^{n/2}\Gamma(s+1)} (1-|x|^2)^s\,dx,  \qquad s>-1, $$
where $dx$ denotes the Lebesgue volume on~$\RR^n$. The~restriction on $s$ ensures that these spaces are nontrivial,
and the factor $\frac{\Gamma(\frac n2+s+1)}{\pi^{n/2}\Gamma(s+1)}$ makes $d\rho_s$ a probability measure,
so~that $\|\jedna\|=1$. By~Green's formula, \Hh functions possess the \emph{invariant mean-value property}:
namely, if $\dh f=0$ and $z\in\bn$, then $f(z)$ equals the mean value, with respect to the Moebius-invariant
measure $d\tau(x)=(1-|x|^2)^{-n}\,dx$, over any Moebius ball in $\bn$ centered at~$x$.
It~follows by a standard argument that the point evaluations $f\mapsto f(x)$ at any $x\in\bn$
are continuous linear functionals on each~$\HH_s$, $s>-1$, and therefore there exists a reproducing
kernel for $\HH_s$ (the~\emph{weighted \Hh Bergman kernel}), namely a function $K_s(x,y)$ on $\bn\times\bn$,
\Hh in both variables and such that
$$ f(x) = \int_\bn f(y) K_s(x,y) \, d\rho_s(y) \qquad \forall x\in\bn, \forall f\in\HH_s.  $$

For the analogous weighted Bergman spaces of \emph{holomorphic}, rather than $H$-harmonic, functions on the
unit ball $\Bn\cong B^{2n}$ of $\CC^n\cong\RR^{2n}$,
$$ \AA_s := \{f\in L^2(\Bn,d\rho_s): \; f\text{ is holomorphic on }\Bn \} , $$
the reproducing kernels are given by the simple formula
$$ K_s^\hol(x,y) = (1-\spr{x,y})^{-n-1} .  $$
It~is a remarkable fact --- which prevails in the much more general context of bounded
symmetric domains, constituting the ``analytic continuation'' of the principal series
representations of certain semisimple Lie groups, cf.~Rossi and Vergne~\cite{RV} ---
that these weighted Bergman kernels $K^\hol_s(x,y)$, $s>-1$, continue to be positive definite
kernels in the sense of Aronszajn~\cite{Aro} for all $s\ge-n-1$, yielding thus an
``analytic continuation'' of the spaces~$\AA_s$. (One~calls the interval $-n-1\le s<+\infty$
the \emph{Wallach set} of~$\Bn$.) For $s>-n-1$, these spaces $\AA_s$ turn out to be Besov-type
spaces of analytic functions; for $s=-n-1$, the kernel $K_{-n-1}^\hol(x,y)$ becomes constant~one,
and the corresponding reproducing kernel Hilbert space thus reduces just to the constants.
However, a~much more interesting space arises as the ``residue'' of $\AA_s$ at $s=-n-1$:
namely, the~limit
$$ \lim_{s\searrow-n-1} \frac{K^\hol_s(x,y)-1}{s+n+1} = \log\frac1{1-\spr{x,y}} $$
is~a~positive definite kernel on $\Bn\times\Bn$, and the associated reproducing kernel
Hilbert space is nothing else but the familiar \emph{Dirichlet space} on~$\Bn$.
Furthermore, the semi-inner product in this space turns out to be \emph{Moebius invariant},
in~the sense that
$$ \spr{f,g} = \spr{f\circ\phi,g\circ\phi}  $$
for any biholomorphic self-map $\phi$ of~$\Bn$. See e.g.~Chapter~6.4 in~Zhu~\cite{Zhu}.

It~has recently been shown by two of the current authors that a similar situation as described
in the previous paragraph prevails also for spaces of $M$-harmonic functions on~$\Bn$
(i.e.~functions annihilated by the Poincar\'e Laplacian on~$\Bn$ invariant under the action
of the biholomorphic self-maps of~$\Bn$), as~well as for spaces of harmonic and pluriharmonic functions.
The~aim of the current paper is to treat the case of \Hh functions on~$\bn$.

Quite surprisingly (at~least for the current authors), it~turns out that the situation in the
\Hh case is strikingly different from all the cases above, in~that the ``Wallach set'' of those
real $s$ for which $K_s(x,y)$ continue to be positive definite kernels, does not have the form
of an interval, with an appropriate ``Dirichlet space'' arising as the limit (or,~rather, residue)
at~the lower endpoint. Instead, $K_s$~stop to be positive definite some way below $s=-1$,
only to resurface into positive definite kernels at the point $s=-n$.
In~particular, the ``\Hh Wallach set'' of~$\bn$ is thus disconnected.
Furthermore, although we do not have
any closed formula for the corresponding reproducing kernel, we~are able to show that the
associated semi-inner product~is, miraculously, indeed invariant under composition with
Moebius maps of~$\bn$. In~particular, this gives an answer to Question~2 on page~180 of the
book of M.~Stoll~\cite{St}, in~exhibiting a Moebius-invriant Hilbert space of \Hh functions.

Finally, just as the ordinary (i.e.~holomorphic) Dirichlet space on the disc consists of all holomorphic
functions whose first derivative is square-integrable over the disc, we~give characterizations of our
\Hh Dirichlet space in terms of square-integrability of derivatives of appropriate order.
This is very similar in spirit to the results of Grellier and Jaming \cite{Ja}~\cite{GJ}.

The~details of the construction of our ``\Hh Dirichlet space'' are presented in Section~\ref{sec3},
after recalling the necessary background material in Section~\ref{sec2}. The~characterization in terms
of derivatives is given in Section~\ref{sec4}, and the invariance of the semi-inner product is proved
in Section~\ref{sec5}. In~Section~\ref{sec6}, we~identify the reproducing kernel of our Dirichlet space.
The~last section, Section~\ref{sec7}, contains some final remarks and comments.

Throughout the paper, the notation
$$ A\asymp B $$
means that
$$ cA \le B \le \frac1c A  $$
for some $0<c\le1$ independent of the variables in question. If~only the first of these inequalities
is fulfilled, we~write $A\lesssim B$; and $A\sim B$ means that $A/B$ tends to~1.
The~partial derivatives $\partial/{\partial x_j}$ are commonly abbreviated just to~$\partial_j$,
and similarly $\partial_r$ stands for $\partial/\partial r$ and so forth;
and for $\alpha=(\alpha_1,\dots,\alpha_n)\in\NN^n$ a multi-index, $\partial^\alpha$ stands for
$\partial_1^{\alpha_1}\dots\partial_n^{\alpha_n}$.
To~make typesetting a little neater, the shorthand
$$ \GG{a_1,a_2,\dots,a_k}{b_1,b_2,\dots,b_m} := \frac{\Gamma(a_1)\Gamma(a_2)\dots\Gamma(a_k)}
   {\Gamma(b_1)\Gamma(b_2)\dots\Gamma(b_m)}  $$
is often employed throughout the paper.
Finally, $\ZZ$, $\NN$, $\RR$ and $\CC$ denote the sets of all integers, all nonnegative integers,
all real and all complex numbers, respectively.

\section{Notation and preliminaries} \label{sec2}
Let $\bn$ be the unit ball in~$\RR^n$, $n>2$. The~orthogonal transformations
$$ x \longmapsto Ux, \qquad x\in\RR^n, \; U\in\On,  $$
map both $\bn$ and its boundary $\pbn$ (the unit sphere) onto themselves,
and so do the Moebius transformations
$$ \phi_a(x) := \frac{a|x-a|^2+(1-|a|^2)(a-x)}{[x,a]^2}  $$
interchanging the origin $0\in\RR^n$ with some point $a\in\bn$; here $[x,a]$ is defined~as
$$ [x,a]:=\sqrt{1-2\spr{x,a}+|x|^2|a|^2}.  $$
The~group generated by the $\phi_a$, $a\in\bn$, and $\On$ via composition is called the Moebius group of~$\bn$;
it~is actually true that any element of this group can be written just as $U\phi_a$ with some
$U\in\On$ and $a\in\bn$. The~hyperbolic Laplacian
$$ \dh f(x) := (1-|x|^2) [(1-|x|^2)\Delta f(x) + 2(n-2) \spr{x,\nabla f(x)}]  $$
is Moebius invariant, i.e.~commutes with the action of this group. Functions on $\bn$ annihilated
by $\dh$ are called hyperbolic-harmonic, or \Hh for short.
The~\emph{weighted \Hh Bergman space}
$$ \HH_s(\bn) := \{ f\in L^2(\bn,d\rho_s): \;f\text{ is \Hh on }\bn\}  $$
consists of all \Hh functions on $\bn$ square-integrable with respect to the measure
\[ d\rho_s(x) := \frac{\Gamma(\frac n2+s+1)}{\pi^{n/2}\Gamma(s+1)} (1-|x|^2)^s\,dx,  \qquad s>-1, \label{WD} \]
where $dx$ denotes the Lebesgue volume on~$\RR^n$. The~restriction on $s$ ensures that these spaces are nontrivial,
and the factor $\frac{\Gamma(\frac n2+s+1)}{\pi^{n/2}\Gamma(s+1)}$ makes $d\rho_s$ a probability measure,
so~that $\|\jedna\|=1$.

For~an integer $m\ge0$, let $\chm$ be the space of restrictions to the unit sphere $\pbn$
of harmonic polynomials on~$\RR^n$ homogeneous of degree~$m$. We~refer to \cite{ABR},
especially Chapter~5, for the Peter-Weyl decomposition
\[ L^2(\pbn,d\sigma) = \bigoplus_{m=0}^\infty \chm   \label{WA}  \]
under the action of the orthogonal group $O(n)$ of rotations of~$\RR^n$.
Here and throughout $d\sigma$ stands for the normalized surface measure on the unit sphere~$\pbn$.
Performing such a decomposition on each sphere $|z|\equiv\text{const.}$ leads to the analogous
Peter-Weyl decomposition
$$ \HH_s = \bigoplus_m \bhm,  $$
of the weighted \Hh Bergman spaces, where $\bhm$ is the space of ``solid harmonics''
$$ \bhm = \{f\in C(\overline{\bn}): \;f\text{ is \Hh on }\bn \text{ and }f|_\pbn\in\chm\} ,  $$
and the norm of $f=\sum_m f_m$, $f_m\in\bhm$, is~given~by
\[ \|f\|^2_s = \sum_{m=0}^\infty I_m(s) \|f_m\|^2_\pbn,  \label{TJ}  \]
with the coefficients $I_m(s)$ given by the explicit formula
\[ I_m(s) := \frac{\Gamma(\frac n2+s+1)}{\Gamma(\frac n2)\Gamma(s+1)} \int_0^1 t^{m+\frac n2-1} (1-t)^s S_m(t)^2 \,dt , \label{YE} \]
where
\[ S_m(t) := \frac{(n-1)_m}{(\frac n2)_m} \FF21{m,1-\tfrac n2}{m+\frac n2}t  \]
with the Gauss hypergeometric function $_2F_1$ and the Pochhammer symbol $(x)_m:=x(x+1)\dots(x+m-1)$.
Also, any $f_m\in\bhm$ is necessarily of the form
\[ f_m(r\zeta) = S_m(r^2) r^m f_m(\zeta), \quad f_m|\pbn\in\chm, \qquad 0\le r\le1, \zeta\in\pbn.  \label{YC} \]
It~follows that the space $\HH_s$ has reproducing kernel given~by
\[ K_s(x,y) = \sum_{m=0}^\infty \frac{S_m(|x|^2) S_m(|y|^2)}{I_m(s)} Z_m(x,y),  \]
where $Z_m(x,y)$, the zonal harmonic of degree~$m$, is~the reproducing kernel of~$\chm$
extended by homogeneity to all $x,y\in\bn$; see Chapter~8 in~\cite{ABR}.

The~reader is referred to \cite{St} for a detailed exposition of the facts above,
with convenient overview and further developments in \cite{St2} and~\cite{Ur}.
Note however that, for ease of notation, our $S_m(t)$ is $S_m(\sqrt t)$ in the notation
of these references.

\section{Analytic continuation} \label{sec3}
Throughout this paper, we~assume that $n>2$. (For $n=2$, \Hh~functions coincide with ordinary harmonic ones,
and most things work out differently and, in~fact, become much simpler; cf.~Section~6 in~\cite{EY2}.)

\begin{proposition} \label{PA}
The coefficient functions $I_m(s)$, $m>0$, extend to meromorphic functions of $s$ on the entire complex plane,
with a simple pole at $s=-n$, with residue $(n-1)_m/\Gamma(m)$ for $n>2$ even, and $2(n-1)_m/\Gamma(m)$ for $n>2$~odd.
\end{proposition}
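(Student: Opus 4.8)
The plan is to isolate the Gamma‑factor in~\eqref{YE} and reduce everything to the meromorphic continuation of $J_m(s):=\int_0^1 t^{m+\frac n2-1}(1-t)^s S_m(t)^2\,dt$, so that $I_m(s)=\dfrac{\Gamma(\frac n2+s+1)}{\Gamma(\frac n2)\Gamma(s+1)}\,J_m(s)$; the prefactor is obviously meromorphic on~$\CC$, with poles only among those of $\Gamma(\frac n2+s+1)$, and $1/\Gamma(s+1)$ has simple zeros at the nonpositive integers. The integrand of $J_m$ is smooth up to $t=1$ except for the factor $(1-t)^s$ — indeed $S_m$ extends continuously to $[0,1]$ with $S_m(1)=1$ (see~\eqref{YC}), while the exponent $m+\frac n2-1$ is positive so $t=0$ causes no trouble — so I split $\int_0^1=\int_0^{1/2}+\int_{1/2}^1$: the first integral is entire in~$s$, and for the second I substitute $t=1-v$ and study the local expansion at $v=0$ of $\Phi(v):=(1-v)^{m+\frac n2-1}S_m(1-v)^2$.

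The engine is the classical connection formula for the Gauss hypergeometric function at $z=1$ in the logarithmic case. Since $S_m(1-v)=\frac{(n-1)_m}{(n/2)_m}\,{}_2F_1\!\big(m,1-\tfrac n2;m+\tfrac n2;1-v\big)$ and here $c-a-b=n-1\in\{1,2,3,\dots\}$, that formula yields
$$S_m(1-v)=\mathcal A(v)+v^{\,n-1}\ln(v)\,\mathcal B(v),$$
with $\mathcal A,\mathcal B$ holomorphic near~$0$, $\mathcal A(0)=S_m(1)=1$, and $\mathcal B(0)=-\dfrac{(n-1)_m\,\Gamma(\frac n2)}{\Gamma(m)\,\Gamma(1-\frac n2)\,(n-1)!}$; for $n$ even every coefficient of the logarithmic branch carries the factor $1/\Gamma(1-\tfrac n2)=0$, so $\mathcal B\equiv0$ and $S_m$ is in fact a polynomial of degree $\tfrac n2-1$. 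Squaring and multiplying by the (holomorphic‑at‑$0$) factor $(1-v)^{m+\frac n2-1}$ gives
$$\Phi(v)=\Phi_0(v)+v^{\,n-1}\ln(v)\,\Phi_1(v)+v^{\,2n-2}\ln^2(v)\,\Phi_2(v),$$
with $\Phi_j$ holomorphic near~$0$, $\Phi_0(0)=1$, $\Phi_1(0)=2\mathcal B(0)$, and $\Phi=\Phi_0$ a polynomial when $n$ is even. Since $\int_0^{1/2}v^{w}\,dv$, $\int_0^{1/2}v^{w}\ln v\,dv$ and $\int_0^{1/2}v^{w}\ln^2 v\,dv$ are meromorphic in~$w$ with, respectively, a simple, a double and a triple pole at each nonpositive integer, integrating this expansion term by term shows that $J_m$, and hence $I_m$, continues meromorphically to all of~$\CC$.

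Now inspect $s=-n$. There $1/\Gamma(s+1)$ has a simple zero ($1-n$ being a nonpositive integer), whereas $\Gamma(\frac n2+s+1)$ is holomorphic and nonzero for $n$ odd but has a simple pole for $n$ even; and $J_m$ has at $s=-n$ a double pole for $n$ odd — the $v^{n-1}\ln v$ term forces the leading term $-\Phi_1(0)/(s+n)^2=-2\mathcal B(0)/(s+n)^2$, with $\mathcal B(0)\neq0$ — but only a simple pole for $n$ even, with residue $[v^{n-1}]\Phi_0$. Either way the orders combine to a single simple pole of $I_m$. For $n$ odd, assembling the leading coefficients of the three factors gives
$$\operatorname{Res}_{s=-n}I_m(s)=\frac{\Gamma(1-\frac n2)}{\Gamma(\frac n2)}\,(n-1)!\,\bigl(-2\mathcal B(0)\bigr)=\frac{2(n-1)_m}{\Gamma(m)},$$
the $\Gamma(1-\tfrac n2)$'s cancelling once the value of $\mathcal B(0)$ is substituted.

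The even case is where the work lies. There $\operatorname{Res}_{s=-n}I_m(s)=\dfrac{(-1)^{n/2}(n-1)!}{((n/2-1)!)^2}\,[v^{n-1}]\Phi_0$, and one must evaluate in closed form the single Taylor coefficient $[v^{n-1}]\bigl((1-v)^{m+\frac n2-1}S_m(1-v)^2\bigr)$ — equivalently, $\tfrac{(-1)^{n-1}}{(n-1)!}$ times the $(n-1)$‑st $t$‑derivative of $t^{m+\frac n2-1}S_m(t)^2$ at $t=1$. Expanding the square, multiplying by the (finite) binomial series of $(1-v)^{m+\frac n2-1}$ and collapsing the resulting terminating hypergeometric sum should produce $\dfrac{(-1)^{n/2}(n-1)_m\,((n/2-1)!)^2}{(n-1)!\,(m-1)!}$, whence $\operatorname{Res}_{s=-n}I_m(s)=(n-1)_m/\Gamma(m)$. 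I expect this bookkeeping — together with pinning down the coefficients in the degenerate connection formula (for $n$ even both $c-a-b$ and $b$ are integers, so two degeneracies overlap) — to be the main obstacle; as a sanity check, for $n=4$ one has $S_m(t)=\tfrac12(m+2-mt)$ and a direct computation confirms the residue $\tfrac12 m(m+1)(m+2)=(3)_m/\Gamma(m)$.
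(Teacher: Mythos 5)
Your overall strategy is sound and genuinely different from the paper's. You expand the \emph{square} $S_m(1-v)^2$ at the endpoint via the logarithmic connection formula and then continue $J_m(s)$ meromorphically by the standard endpoint-expansion lemma; the paper instead substitutes the connection formula into only \emph{one} of the two factors $S_m$ in \eqref{YE} and evaluates the remaining integral against the other factor in closed form (Lemma~\ref{Smlemma}, a Gauss summation). Your bookkeeping of pole orders is right in both parities (double pole of $J_m$ against the simple zero of $1/\Gamma(s+1)$ for $n$ odd; simple pole of $J_m$ against the pole of $\Gamma(\frac n2+s+1)$ and the zero of $1/\Gamma(s+1)$ for $n$ even), your value of $\mathcal B(0)$ agrees with the coefficient in \eqref{Erdelyiformula}, and your odd-case residue $2(n-1)_m/\Gamma(m)$ comes out correctly and arguably more transparently than in the paper, where it is split between two contributions $A$ and $B$. (One small imprecision: $\int_0^{1/2}v^w\,dv$ and its $\ln$-weighted variants have a pole only at $w=-1$; the poles at every relevant integer arise only after convolving with the Taylor coefficients of $\Phi_j$. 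This does not affect the argument.)

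The genuine gap is the even case, and you flag it yourself: the residue there reduces to the single Taylor coefficient $[v^{n-1}]\bigl((1-v)^{m+\frac n2-1}S_m(1-v)^2\bigr)$, and the claimed closed form $(-1)^{n/2}(n-1)_m\,((\tfrac n2-1)!)^2/\bigl((n-1)!\,(m-1)!\bigr)$ is asserted, not derived — you verify it only for $n=4$. Since $S_m(1-v)$ is a polynomial of degree $\frac n2-1$ in $v$ for even $n$, this coefficient is a genuine double sum (a Cauchy product of $S_m(1-v)^2$ against the binomial expansion of $(1-v)^{m+\frac n2-1}$) whose collapse is exactly the quantitative content of the proposition for even $n$; without it the even-case residue is unproved. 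The paper's asymmetric substitution is precisely what avoids this: after applying Lemma~\ref{Smlemma} term by term, the residue of the finite part $A(s)$ at $s=-n$ acquires a factor $(n+s)_k|_{s=-n}=(0)_k$ that annihilates every term except $k=0$, yielding $(n-1)_m/\Gamma(m)$ with no summation to perform. To close your argument you would either need to prove your coefficient identity (it should follow from a Vandermonde/Saalsch\"utz-type summation, but that must actually be carried out), or replace the symmetric squaring by the paper's one-sided substitution.
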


For~the proof, we will need the following simple lemma.

\begin{lemma}\label{Smlemma}
Let $m\in\NN$, $n>1$, $s>-1$. Then for all $\alpha\geq 0$:
\[ \label{Smlemmaform}
\int_0^1 t^{m+\frac n2-1}(1-t)^{s+\alpha}S_m(t)\,dt=
\frac{(n-1)_m\Gamma(s+1+\alpha)\Gamma(n+s+\alpha)\Gamma(\frac n2)}{\Gamma(m+n+s+\alpha)\Gamma(\frac n2+s+1+\alpha)}.  \]
\end{lemma}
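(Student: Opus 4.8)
The integral to be evaluated is
\[ \int_0^1 t^{m+\frac n2-1}(1-t)^{s+\alpha}S_m(t)\,dt, \]
where $S_m(t) = \frac{(n-1)_m}{(\frac n2)_m}\,{}_2F_1\!\left(\begin{matrix}m,1-\frac n2\\ m+\frac n2\end{matrix}\Big|t\right)$. The natural approach is to expand the hypergeometric function in its power series in $t$ and integrate term by term against $t^{m+\frac n2-1}(1-t)^{s+\alpha}$ using the Euler beta integral $\int_0^1 t^{a-1}(1-t)^{b-1}\,dt = \Gamma(a)\Gamma(b)/\Gamma(a+b)$. After writing $\beta := s+\alpha$, this yields
\[ \frac{(n-1)_m}{(\frac n2)_m}\sum_{k=0}^\infty \frac{(m)_k(1-\frac n2)_k}{(m+\frac n2)_k\,k!}\cdot
   \frac{\Gamma(m+\frac n2+k)\,\Gamma(\beta+1)}{\Gamma(m+\frac n2+\beta+1+k)}, \]
which, upon pulling out the $k=0$ factor $\Gamma(m+\frac n2)\Gamma(\beta+1)/\Gamma(m+\frac n2+\beta+1)$ and using $(x)_k = \Gamma(x+k)/\Gamma(x)$, collapses into a single ${}_3F_2$ evaluated at $1$:
\[ \frac{(n-1)_m}{(\frac n2)_m}\cdot\GG{m+\frac n2,\beta+1}{m+\frac n2+\beta+1}\cdot
   {}_3F_2\!\left(\begin{matrix}m,\,1-\frac n2,\,m+\frac n2\\ m+\frac n2,\,m+\frac n2+\beta+1\end{matrix}\Big|1\right). \]
The repeated parameter $m+\frac n2$ cancels, leaving a ${}_2F_1$ at $1$ with numerator parameters $m$, $1-\frac n2$ and denominator parameter $m+\frac n2+\beta+1$.

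The key step is then to apply the Gauss summation theorem: ${}_2F_1\!\left(\begin{matrix}a,b\\ c\end{matrix}\big|1\right) = \Gamma(c)\Gamma(c-a-b)/[\Gamma(c-a)\Gamma(c-b)]$, valid when $\Re(c-a-b)>0$. Here $a=m$, $b=1-\frac n2$, $c = m+\frac n2+\beta+1$, so $c-a-b = n+\beta = n+s+\alpha$, which is positive for $s>-1$, $\alpha\ge0$, $n>1$; the convergence hypothesis is met. This gives $c-a = \frac n2+\beta+1$ and $c-b = m+n-1+\beta+1 = m+n+\beta$, so the ${}_2F_1$ equals $\Gamma(m+\frac n2+\beta+1)\Gamma(n+\beta)/[\Gamma(\frac n2+\beta+1)\Gamma(m+n+\beta)]$. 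Substituting back and substituting $\beta = s+\alpha$, the factor $\Gamma(m+\frac n2+\beta+1)$ cancels against the denominator of the beta-integral prefactor, $(\frac n2)_m = \Gamma(\frac n2+m)/\Gamma(\frac n2)$ turns the leading coefficient into $(n-1)_m\Gamma(\frac n2)/\Gamma(\frac n2+m)$, and a short cleanup produces exactly the claimed right-hand side
\[ \frac{(n-1)_m\,\Gamma(s+1+\alpha)\,\Gamma(n+s+\alpha)\,\Gamma(\tfrac n2)}
        {\Gamma(m+n+s+\alpha)\,\Gamma(\tfrac n2+s+1+\alpha)}. \]

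The main obstacle, such as it is, is purely bookkeeping: one must justify the term-by-term integration (absolute convergence of the series for $0\le t\le 1$, which is immediate since ${}_2F_1(m,1-\frac n2;m+\frac n2;t)$ terminates when $\frac n2$ is a positive integer and otherwise converges absolutely on $[0,1]$ because $c-a-b = \frac n2 - 1 > 0$ for $n>2$ — and for $n=2$ it terminates as well), and verify the Gauss-summation convergence condition $\Re(n+s+\alpha)>0$. Beyond that, the computation is a deterministic sequence of Pochhammer-to-Gamma conversions and cancellations; no genuine difficulty arises. One should also note in passing that both sides are meromorphic in $s$ and $\alpha$, so once equality is established for $s>-1$, $\alpha\ge0$ it persists by analytic continuation wherever both sides are defined — a remark that will be convenient when the lemma is invoked in the proof of Proposition \ref{PA}.
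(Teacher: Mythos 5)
Your proof is correct and follows essentially the same route as the paper's: expand the ${}_2F_1$ in $S_m$, integrate term by term via the Euler Beta integral, recognize the resulting series as a ${}_2F_1$ at $1$ with parameters $m,\,1-\frac n2;\,m+\frac n2+s+\alpha+1$, and apply Gauss's summation theorem. The only quibble is a small arithmetic slip in the convergence aside: for the original series one has $c-a-b=(m+\frac n2)-m-(1-\frac n2)=n-1$, not $\frac n2-1$, which if anything strengthens your claim since $n-1>0$ for all $n>1$ as the lemma requires.
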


\begin{proof}
From definition of $S_m$ we have
\begin{align*}
\int_0^1 t^{m+\frac n2-1}(1-t)^{s+\alpha}S_m(t)\,dt
&= \frac{(n-1)_m}{(\frac n2)_m} \int_0^1 t^{m+\frac n2-1}(1-t)^{s+\alpha} \FF21{m,1-\frac n2}{m+\frac n2}t \,dt\\
&= \frac{(n-1)_m}{(\frac n2)_m} \int_0^1 t^{m+\frac n2-1}(1-t)^{s+\alpha}
\sum_{k=0}^\infty \frac{(m)_k(1-\frac n2)_k}{(m+\frac n2)_k k!} t^k \,dt.
\end{align*}
The series converges uniformly for all $t\in [0,1]$ when $n>1$ by a simple ratio test.
Therefore we can integrate term by term to get
\begin{align*}
&= \frac{(n-1)_m}{(\frac n2)_m} \sum_{k=0}^\infty \frac{(m)_k(1-\frac n2)_k}{(m+\frac n2)_k k!}
 \frac{\Gamma(m+\frac n2+k)\Gamma(s+\alpha+1)}{\Gamma(m+\frac n2+s+\alpha+1+k)} \\
&= \frac{(n-1)_m\Gamma(m+\frac n2)\Gamma(s+\alpha+1)}{(\frac n2)_m\Gamma(m+\frac n2+s+\alpha+1)} \FF21{m,1-\frac n2}{m+\frac n2+s+\alpha+1}1 \\
&= \frac{(n-1)_m\Gamma(s+1+\alpha)\Gamma(n+s+\alpha)\Gamma(\frac n2)}{\Gamma(m+n+s+\alpha)\Gamma(\frac n2+s+1+\alpha)},
\end{align*}
by Gauss's summation formula \cite[\S2.1~(14)]{BE}.  \end{proof}

\begin{proof}[Proof of Proposition~\ref{PA}]
Using the formula \cite[\S2.10~(12)]{BE} on $S_m(t)$ we obtain
\begin{align*}
S_m(t) &= \sum_{k=0}^{n-2}\frac{(m)_k(1-\frac n2)_k}{k!(2-n)_k}(1-t)^k \\
& \qquad + \frac{(-1)^n}{\Gamma(m)\Gamma(1-\frac n2)\Gamma(n-1)} \sum_{k=0}^\infty \frac{\Gamma(\frac n2+k)\Gamma(m+n-1+k)}{\Gamma(k+1)\Gamma(k+n)} (1-t)^{k+n-1} \\
& \qquad \times\Big(\log(1-t)-\psi(k+1)-\psi(k+n)+\psi(\frac n2+k)+\psi(m+n+k-1)\Big).
\end{align*}
This can be rewritten as
\[ \begin{aligned}
S_m(t) &= \sum_{k=0}^{n-2}\frac{(m)_k(1-\frac n2)_k}{k!(2-n)_k}(1-t)^k \\
& \qquad +\frac{(-1)^n}{\Gamma(m)\Gamma(1-\frac n2)\Gamma(n-1)}
 \sum_{k=0}^\infty [\epsilon] \frac{\Gamma(\frac n2+k+\epsilon)\Gamma(m+n-1+k+\epsilon)}{\Gamma(k+1+\epsilon)\Gamma(k+n+\epsilon)}(1-t)^{k+n-1+\epsilon},
\end{aligned} \label{Erdelyiformula} \]
where $[\epsilon] f(\epsilon):=f'(0)$.

Replacing one of the $S_m$ in \eqref{YE} by \eqref{Erdelyiformula} and applying Lemma~\ref{Smlemma} term by term we obtain
\begin{align*}
I_m(s)&=A(s)+B(s),\\
\intertext{where}
A(s)&=\frac{\Gamma(n+s)(n-1)_m}{\Gamma(m+n+s)}\sum_{k=0}^{n-2}\frac{(m)_k (1-\frac n2)_k (s+1)_k (n+s)_k}{k!(2-n)_k (\frac n2+s+1)_k (m+n+s)_k}, \\
B(s)&=\frac{(-1)^n(n-1)_m\Gamma(\frac n2+s+1)}{\Gamma(m)\Gamma(1-\frac n2)\Gamma(n-1)\Gamma(s+1)} \\
& \qquad \times \sum_{k=0}^\infty [\epsilon] \frac{\Gamma(\frac n2+k+\epsilon) \Gamma(m+n-1+k+\epsilon)\Gamma(s+n+k+\epsilon)\Gamma(2n+s-1+k+\epsilon)}
  {\Gamma(k+1+\epsilon)\Gamma(k+n+\epsilon)\Gamma(\frac{3n}2+s+k+\epsilon)\Gamma(m+2n+s-1+k+\epsilon)}.
\end{align*}
It is easy to check that the $k$-th term of the series in $B(s)$ behaves like $k^{-2}$ for large~$k$,
hence the series converges absolutely for all values of $s$ and $m$
(as~long as we avoid the singularities of the $\Gamma$ functions, of course).

Using the fact that
$$ [\epsilon] \Gamma(a+\epsilon)=\Gamma(a)^2[\epsilon]\frac{1}{\Gamma(a-\epsilon)}, $$
we can write $A(s)$, $B(s)$ in the more illuminating form
\begin{align*}
A(s) &= \Gamma(n+s)(n-1)_m\Gamma(\frac n2+s+1)\sum_{k=0}^{n-2}\frac{(m)_k (1-\frac n2)_k (s+1)_k (n+s)_k}{k!(2-n)_k \Gamma(\frac n2+s+1+k)\Gamma(m+n+s+k)}, \\
B(s) &= \frac{(-1)^n(n-1)_m\Gamma(\frac n2+s+1)\Gamma(s+n)\Gamma(2n+s-1)^2 (s+1)_{n-1}}{\Gamma(m)\Gamma(1-\frac n2)\Gamma(n-1)} \\
& \qquad \times \sum_{k=0}^\infty [\epsilon] \frac{\Gamma(\frac n2+k+\epsilon)\Gamma(m+n-1+k+\epsilon)}
 {\Gamma(k+1+\epsilon)\Gamma(k+n+\epsilon)\Gamma(\frac{3n}2+s+k+\epsilon)\Gamma(m+2n+s-1+k+\epsilon)} \\
& \qquad \times \frac{(s+n)_k^2 (2n+s-1)_k^2}{\Gamma(s+n+k-\epsilon)\Gamma(2n+s+k-1-\epsilon)},
\end{align*}
which reveals that both $A$ and $B$ are meromorphic. In fact,
\begin{align*}
A(s) &= \Gamma(n+s)\Gamma(\frac n2+s+1) \times (\text{an entire function of $s$}) ,  \\
B(s) & =\Gamma(n+s)\Gamma(\frac n2+s+1)\Gamma(2n+s-1)^2 \times (\text{an entire function of $s$}) .
\end{align*}
Thus the function $I_m(s)$ can have a pole only at $s=-n-j$ or $s=-1-n/2-j$ or $s=1-2n-j$, $j\in\NN$.

Using the fact that
$$ \lim_{s\to -n}(n+s)\Gamma(n+s)= 1, $$
we can directly compute
$$ \lim_{s\to -n}(s+n) A(s) =(n-1)_m\sum_{k=0}^{n-2}\frac{(m)_k (1-\frac n2)_k (1-n)_k (0)_k}{k!(2-n)_k (1-\frac n2)_k \Gamma(m+k)} = \frac{(n-1)_m}{\Gamma(m)}.  $$
Since for $n$ even $B(s)$ vanishes identically (due~to the presence of $\Gamma(1-n/2)$ in the denominator) this proves the proposition in the case of $n>2$ even.

For $n>2$ odd, a~similar computation yields
\begin{align*}
\lim_{s\to -n}(s+n)B(s) & =\frac{-(n-1)_m\Gamma(1-\frac n2)\Gamma(n-1)^2 \Gamma(n)} {\Gamma(m)\Gamma(1-\frac n2)\Gamma(n-1)} \\
& \qquad \times \sum_{k=0}^\infty [\epsilon] \frac{\Gamma(\frac n2+k+\epsilon)\Gamma(m+n-1+k+\epsilon)}
 {\Gamma(k+1+\epsilon)\Gamma(k+n+\epsilon)\Gamma(\frac n2+k+\epsilon)\Gamma(m+n-1+k+\epsilon)} \\
& \qquad \times \frac{(0)_k^2 (n-1)_k^2}{\Gamma(k-\epsilon)\Gamma(n+k-1-\epsilon)}.
\end{align*}
The resulting series has only a single non-zero term for $k=0$. And even for this term we must spend the derivative with respect to $\epsilon $ on the factor
$$ \frac{1}{\Gamma(k-\epsilon)}, $$
otherwise we will get zero. Since
$$ \lim_{\epsilon\to 0} \frac{\Gamma'(-\epsilon)}{\Gamma(-\epsilon)^2}\to -1 ,  $$
we~thus obtain
$$ \lim_{s\to -n}(s+n)B(s) = \frac{(n-1)_m\Gamma(n-1)\Gamma(n)}{\Gamma(m)}
 \frac{\Gamma(\frac n2)\Gamma(m+n-1)}{\Gamma(1)\Gamma(n)\Gamma(\frac n2)\Gamma(m+n-1)}
 \frac1{\Gamma(n-1)} = \frac{(n-1)_m}{\Gamma(m)}.  $$
Combining both results we get
$$ \lim_{s\to -n}(n+s)I_m(s) = \lim_{s\to -n}(n+s) (A(s)+B(s)) = 2\frac{(n-1)_m}{\Gamma(m)}, $$
completing the proof for $n>2$ odd.  \end{proof}

It~was shown in Theorem~3.1 in~\cite{Ur} that
\[ I_m(s)\asymp(m+1)^{-s-1} \label{YF}  \]
for any fixed $s>-1$. We~will need a similar result for the somewhat more general integral
\[ I_{m,k}(s) := \frac{\Gamma(\frac n2+s+1)}{\Gamma(\frac n2)\Gamma(s+1)} \int_0^1
 [(2t\partial_t)^k (t^{m/2} S_m(t))]^2 \; t^{\frac n2-1} (1-t)^s \,dt ,  \label{YMK} \]
which reduces to \eqref{YE} for $k=0$.

\begin{proposition} \label{PZ}
For a fixed $s>-1$ and $k\in\{0,1,\dots,n-2\}$,
\[ I_{m,k}(s) \asymp (m+1)^{-s-1+2k} .  \label{YMI} \]
\end{proposition}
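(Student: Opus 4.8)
The plan is to mimic the structure of the proof of Proposition~\ref{PA}: expand $t^{m/2}S_m(t)$ using the connection formula \eqref{Erdelyiformula}, apply the differential operator $(2t\partial_t)^k$ term by term, and then estimate the resulting integral against $t^{\frac n2-1}(1-t)^s$. The point is that each summand in \eqref{Erdelyiformula} is (up to the $\log$/$\psi$ corrections packaged in the $[\epsilon]$ notation) of the form $c_{j}(1-t)^{j}$ for $j=0,1,\dots,n-2$, or $(1-t)^{j+n-1}$ for $j\geq0$, times a constant depending on $m$. Since the operator $2t\partial_t$ sends $(1-t)^j$ to $-2jt(1-t)^{j-1}$, which near the boundary $t=1$ has the same order of vanishing as $(1-t)^{j-1}$, and sends $t^{a}$ type factors to comparable ones, applying $(2t\partial_t)^k$ lowers the boundary vanishing order of a term $(1-t)^j$ by at most $k$. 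Feeding this into the integral $\int_0^1 (\cdot)^2 t^{\frac n2-1}(1-t)^s\,dt$ and using the standard Beta-integral asymptotics $\int_0^1 t^{m+a}(1-t)^b\,dt \sim \Gamma(b+1)m^{-b-1}$ as $m\to\infty$, one expects the dominant contribution to come from the term with the \emph{lowest} surviving power of $(1-t)$, which for $k\leq n-2$ is the constant term of $t^{m/2}S_m(t)$ (after normalization the leading coefficient), differentiated $k$ times; this produces the exponent $-s-1+2k$ claimed in~\eqref{YMI}.

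More concretely, first I would record that $S_m(t) = \dfrac{(n-1)_m}{(\frac n2)_m}\bigl(1 + O(1-t)\bigr)$ near $t=1$ with the $O$ uniform in $m$ after dividing by $(n-1)_m/(\frac n2)_m$, and that $t^{m/2}S_m(t)$ is a nice function on $[0,1]$; the factor $(n-1)_m/(\frac n2)_m \asymp m^{n/2-1}$ by Stirling, but this constant cancels between the two copies of $I_{m,k}$ and $I_{m,0}=I_m$ only up to controlled factors, so I will instead estimate $I_{m,k}$ directly. Using \eqref{Erdelyiformula}, write $t^{m/2}S_m(t)= P_m(t) + R_m(t)$ where $P_m$ is the polynomial part $\sum_{j=0}^{n-2}\frac{(m)_j(1-\frac n2)_j}{j!(2-n)_j}(1-t)^j$ (times the prefactor) multiplied by $t^{m/2}$, and $R_m$ is the remaining series, which vanishes to order $n-1$ at $t=1$ with coefficients $O(m^{n-2}/\Gamma(m))\cdot(\log\text{-type})$. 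Then $(2t\partial_t)^k$ applied to $P_m$ yields, at $t=1$, a nonzero value coming from the $j=0$ term only through the action on $t^{m/2}$: $(2t\partial_t)^k t^{m/2} = m^k t^{m/2}(1+o(1))$; more generally each monomial $t^{m/2}(1-t)^j$ produces, after $k$ derivatives, a linear combination of $t^{m/2}(1-t)^{j'}$ with $j-k\leq j'\leq j$ and coefficients that are polynomials in $m$ of degree $k-(j-j')$. Squaring and integrating against $t^{\frac n2-1}(1-t)^s$, the largest term is $m^{2k}\int_0^1 t^{m+\frac n2-1}(1-t)^s\,dt \asymp m^{2k}\cdot m^{-s-1}$, and every other contribution (from $R_m$, from cross terms, from $j\geq1$ with fewer than $j$ derivatives used) is of strictly smaller order because either the power of $m$ drops or the power of $(1-t)$ rises. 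Care is needed that no cancellation kills this leading term — this is where having a clean formula for the $j=0$, full-derivative-on-$t^{m/2}$ contribution matters, and it does not vanish. The lower bound then follows from the same computation once we check the leading term is genuinely of size $m^{2k-s-1}$ and is not cancelled, and the upper bound from dominating everything by $\lesssim m^{2k}\int_0^1 t^{m+\frac n2-1}(1-t)^{s}\,dt + (\text{lower order})$.

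The restriction $k\leq n-2$ is what guarantees that applying $(2t\partial_t)^k$ never produces a negative power of $(1-t)$ from the polynomial part $P_m$ (whose top term is $(1-t)^{n-2}$) — so the integrals all remain convergent and the Beta-asymptotics apply uniformly; for $k>n-2$ one would start differentiating the $(1-t)^0$ term of $P_m$ down past integrability, or rather the logarithmic part $R_m$ would begin to dominate in a different way, and \eqref{YMI} would fail, which is why the statement stops at $n-2$. The main obstacle I anticipate is bookkeeping the $\psi$-function (logarithmic) terms in $R_m$ after applying $(2t\partial_t)^k$ and showing their contribution to the integral is genuinely $O(m^{2k-s-1-\delta})$ for some $\delta>0$ rather than matching the main term; this should follow because $R_m$ vanishes to order $n-1\geq k+1$ at $t=1$, so even after $k$ derivatives it still vanishes to positive order, forcing at least one extra power of $m^{-1}$ in the Beta-integral compared with the $j=0$ polynomial term — but the $\log$ factors mean one must be slightly careful, using e.g. $\int_0^1 t^{m}(1-t)^{b}|\log(1-t)|^2\,dt \lesssim m^{-b-1}\log^2 m$, which is still $o(m^{-b-1+\epsilon})$ and hence negligible against the main $m^{-s-1+2k}$ provided $b = s + (\text{something} \geq 1) > s$. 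Assembling these estimates gives \eqref{YMI}.
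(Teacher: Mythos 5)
Your upper bound is fine, but your lower bound rests on a claim that is false, and the falsity of that claim is precisely what makes this proposition nontrivial. You assert that after applying $(2t\partial_t)^k$ and squaring, the dominant contribution is $m^{2k}\int_0^1 t^{m+\frac n2-1}(1-t)^s\,dt\asymp m^{2k-s-1}$ and that every other contribution is of strictly smaller order because ``either the power of $m$ drops or the power of $(1-t)$ rises.'' Test this on the $j$-th term of the polynomial part: its coefficient $\tfrac{(m)_j(1-\frac n2)_j}{j!(2-n)_j}$ grows like $m^j$, and spending $i\le j$ of the $k$ derivatives on the factor $(1-t)^j$ yields a term of size $m^{j}\cdot m^{k-i}\cdot(1-t)^{j-i}$; squaring and integrating against $t^{m+\frac n2-1}(1-t)^s$ gives, by the very Beta asymptotics you invoke, $m^{2j+2k-2i}\cdot m^{-(2j-2i+s)-1}=m^{2k-s-1}$. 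So \emph{every} $j$ and \emph{every} distribution of the derivatives contributes at exactly the same order as your putative main term, as do the cross terms. (Already at $k=0$ this is visible: the $j=1$ term $\tfrac m2(1-t)$ of $S_m$ contributes $m^2\cdot m^{-s-3}=m^{-s-1}$.) The leading coefficient is therefore a sum of infinitely many same-order contributions, and the entire difficulty is to show that this sum is nonzero. That this is a genuine danger and not a formality is shown by the paper's own remark that the analogous leading constant $L_0(s)$ vanishes at $s=-2$: cancellation really does occur for other values of $s$. This is why the paper is forced to compute the constant in closed form --- it expands $(2t\partial_t)^k(t^{m/2}S_m)$ into contiguous hypergeometric functions with coefficients $c_{j,k}(m)$, evaluates the resulting integrals $\tilde I_{j,l}$ exactly, and then pushes the limit constant through a chain of ${}_3F_2$ and Pfaff transformations until it appears as a series of squares with coefficients $(1-\frac n2)_j^2(s+1)_j/j!\ge0$, which is where positivity (for $s>-1$, and only there) finally comes from.

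Two smaller points. First, since $I_{m,k}(s)$ is the integral of a square against a positive weight, a potential repair within your framework would be to restrict the integral to $1-c_2/m<t<1-c_1/m$ and show that, after the substitution $t=1-u/m$ and division by $(2m)^k$, the function $(2t\partial_t)^k(t^{m/2}S_m(t))$ converges to $(-1)^k\partial_u^k[e^{-u/2}Q(u)]$ for an explicit limit function $Q$ which is not a polynomial of degree $<k$; but this nondegeneracy and the uniformity of the convergence would have to be proved, and neither appears in your write-up. Second, your explanation of the restriction $k\le n-2$ is off: differentiating the polynomial part never produces negative powers of $(1-t)$ (the constant term simply dies). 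The constraint comes from the logarithmic tail $(1-t)^{n-1}\log\frac1{1-t}$ of $S_m$ and, in the paper's argument, from the convergence of the closed-form leading constant, whose ${}_3F_2(1)$ series requires $2n+s>2k+1$.
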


Actually, we~even have the following complete asymptotic expansion, of~which \eqref{YMI} is just the leading order term.

\begin{proposition} \label{prop2}
Assume that $s>-1$ and $k<n-1$. Then there exist constants $A_j$ depending only on $s$, $k$ and $n$ such that
$$ I_{m,k}(s) \approx \sum_{j=0}^\infty \frac{A_j}{m^{s+1-2k+j}} \qquad \text{as }m\to \infty,  $$
with $A_0>0$.
\end{proposition}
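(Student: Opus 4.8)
The plan is to prove both statements by the Laplace (Watson's lemma) method, exploiting that for large $m$ the integrand in \eqref{YMK} concentrates near $t=1$. The first step is a reduction. Using the identity $(2t\partial_t)^k\big(t^{m/2}S_m(t)\big)=t^{m/2}(m+2t\partial_t)^kS_m(t)$, passing to $u=-\log t$ (so that $2t\partial_t=-2\partial_u$), and writing $g_m(u):=t^{m/2}S_m(t)\big|_{t=e^{-u}}=e^{-mu/2}S_m(e^{-u})$, one has
\[ I_{m,k}(s)=\frac{\Gamma(\tfrac n2+s+1)}{\Gamma(\tfrac n2)\Gamma(s+1)}\,4^k\int_0^\infty\big[\partial_u^k g_m(u)\big]^2 e^{-nu/2}(1-e^{-u})^s\,du. \]
The maximum principle for $\dh$ (which has no zeroth-order term), applied to the solid harmonic $S_m(r^2)r^mY_m$, gives $\|g_m\|_{[0,\infty)}\le1$; and since $S_m(e^{-u})=O(m^{\,n/2-1})$ for $u$ bounded away from $0$, both $g_m$ and its derivatives are exponentially small there, so only $u=O(1/m)$ contributes. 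Substituting $u=v/m$, which turns $m+2t\partial_t$ into $m(1-2\partial_v)$, gives
\[ I_{m,k}(s)=\frac{\Gamma(\tfrac n2+s+1)}{\Gamma(\tfrac n2)\Gamma(s+1)}\,4^k m^{2k-1}\int_0^\infty\big[\partial_v^k\!\big(e^{-v/2}\phi_m(v)\big)\big]^2 e^{-nv/(2m)}(1-e^{-v/m})^s\,dv,\qquad \phi_m(v):=S_m(e^{-v/m}). \]

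The second step is the expansion of $\phi_m$. Letting $m\to\infty$ in the hypergeometric equation satisfied by $S_m$ shows that $\phi_m$, together with its derivatives, converges uniformly on compacta to the solution $\Phi_0$ of the confluent limiting equation
\[ v\Phi_0''-(v+n-2)\Phi_0'+\big(\tfrac n2-1\big)\Phi_0=0 \]
that is regular at the origin with $\Phi_0(0)=1$ and of at most polynomial growth at infinity; for $n$ even this is $\Phi_0={}_1F_1\!\big(1-\tfrac n2;2-n;\,\cdot\,\big)$, a polynomial of degree $\tfrac n2-1$, whereas for $n$ odd it is the corresponding recessive Kummer-type solution and carries a term $\propto v^{n-1}\log v$ (for $n=3$, $\Phi_0(v)=\tfrac12 v\,e^{v/2}K_1(v/2)$, $K_1$ the modified Bessel function). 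Inserting the ansatz $\phi_m\approx\sum_{j\ge0}m^{-j}\Phi_j$ into the $m$-dependent equation for $\phi_m$ and expanding its coefficients in powers of $1/m$ identifies each $\Phi_j$, $j\ge1$, as the polynomial-growth solution of an inhomogeneous version of the displayed equation whose right-hand side is built from $\Phi_0,\dots,\Phi_{j-1}$; hence every $\Phi_j$ is a finite combination of terms $v^a$ and $v^a\log v$ with $a\ge0$ of polynomial growth, so that $e^{-v/2}\Phi_j$ and all its derivatives decay like $e^{-v/2}$ times a polynomial. (The local structure of the $\Phi_j$ can alternatively be read off from \eqref{Erdelyiformula}, using the cancellation $\psi(m+n-1+k)+\log(1-e^{-v/m})=\log v+O(1/m)$ that disposes of the spurious $\log m$'s; but one must beware that the $k$-series there does \emph{not} commute with $m\to\infty$.)

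The third step is Watson's lemma. Feeding $\phi_m\approx\sum_j m^{-j}\Phi_j$ together with $(1-e^{-v/m})^s=(v/m)^s(1+O(v/m))$ and $e^{-nv/(2m)}=1+O(v/m)$ into the integral and integrating term by term — justified by an $m$-independent integrable majorant, coming from $\|g_m\|_\infty\le1$ and interior estimates for the ODE, together with the exponentially small tail for $u\gtrsim\delta$ — yields
\[ I_{m,k}(s)\approx\sum_{j\ge0}\frac{A_j}{m^{s+1-2k+j}},\qquad A_j=\frac{\Gamma(\tfrac n2+s+1)}{\Gamma(\tfrac n2)\Gamma(s+1)}\int_0^\infty(\,\cdots\,)_j\,v^s\,dv, \]
where $(\cdots)_0=4^k\big[\partial_v^k\!\big(e^{-v/2}\Phi_0(v)\big)\big]^2$ and, for $j\ge1$, $(\cdots)_j$ is the corresponding explicit polynomial-times-log expression built from $\Phi_0,\dots,\Phi_j$. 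Here the hypothesis $k<n-1$ enters: the lowest logarithmic term of each $\Phi_j$ is $\propto v^{n-1}\log v$, and $\partial_v^k$ lowers it only to $v^{\,n-1-k}\log v$ with $n-1-k\ge1$, so $\partial_v^k(e^{-v/2}\Phi_j)$ extends continuously (indeed by zero) to $v=0$ and every integral defining $A_j$ converges for each fixed $s>-1$. Positivity of the leading coefficient, $A_0>0$, is then immediate: $\big[\partial_v^k(e^{-v/2}\Phi_0)\big]^2\ge0$ is not identically zero, because $\Phi_0(0)=1\ne0$ makes $e^{-v/2}\Phi_0$ a nonzero function which is not a polynomial of degree $<k$, whence $\partial_v^k(e^{-v/2}\Phi_0)\not\equiv0$.

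The main obstacle is the rigorous justification of the expansion $\phi_m\approx\sum_j m^{-j}\Phi_j$ uniformly on the whole half-line. One must identify $\Phi_0$ — and likewise each $\Phi_j$ — as the \emph{polynomial-growth} solution of the relevant equation rather than the generic logarithmic one; this relies on the a priori bound $|\phi_m(v)|\le e^{v/2}$ furnished by $\|g_m\|_\infty\le1$, which rules out the solution growing like $e^{v}$. And one must bound $\partial_v^k\!\big(e^{-v/2}\phi_m(v)\big)$ uniformly in $m$ (over all of $[0,\infty)$) so that the term-by-term integration is legitimate. Everything else is routine special-function and ODE bookkeeping, and \eqref{YMI} falls out as the $j=0$ term.
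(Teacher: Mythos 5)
Your route is genuinely different from the paper's, and at leading order it is correct: the limiting equation $v\Phi_0''-(v+n-2)\Phi_0'+(\tfrac n2-1)\Phi_0=0$ is indeed what the hypergeometric equation for $S_m$ degenerates to under $t=e^{-v/m}$, and for $n=4$, $k=0$ your recipe $A_0=\frac{\Gamma(\frac n2+s+1)}{\Gamma(\frac n2)\Gamma(s+1)}\int_0^\infty e^{-v}\Phi_0(v)^2v^s\,dv$ with $\Phi_0(v)=1+\tfrac v2$ gives $\tfrac14\Gamma(s+3)(s+2)(s+5)$, which agrees with the value obtained from the paper's Corollary~\ref{cor3}. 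Your positivity argument for $A_0$ (a square of a not-identically-vanishing function) is also arguably cleaner than the paper's. The paper, however, proceeds in an entirely algebraic way: it expands $(2t\partial_t)^k$ through the coefficients $c_{j,k}(m)$ so that $I_{m,k}(s)$ becomes a finite combination of integrals $\tilde I_{j,l}$ of products of two $_2F_1$'s, evaluates each $\tilde I_{j,l}$ \emph{exactly} as a convergent series of $_3F_2(1)$'s (formula \eqref{Cor1eq}) in which $m$ enters only through $\Gamma(m+n-1)/\Gamma(m+n+s)$ and $(m+n+s)_k$, and then reads off the complete expansion in powers of $1/m$ termwise from these elementary factors. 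No uniform asymptotics of $S_m$ near $t=1$ is ever needed; that is what the exact evaluation buys.

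That is exactly where your proposal has its (self-acknowledged) gap, and it is a genuine one rather than routine bookkeeping. To run Watson's lemma to all orders you need (i) the expansion $\phi_m\sim\sum_j m^{-j}\Phi_j$ with remainder bounds uniform on all of $[0,\infty)$ (or at least with controlled integrable growth in $v$), (ii) the same for the first $k$ derivatives, and (iii) an $m$-independent integrable majorant for $[\partial_v^k(e^{-v/2}\phi_m)]^2v^s$. The a~priori bound $\|g_m\|_\infty\le1$ gives none of the derivative bounds; interior ODE estimates degenerate both at $v=0$ (where the equation is singular and, for $n$ odd, the subdominant solution carries $v^{n-1}\log v$) and as $v\to\infty$ (where the finite-$v$ analysis must be matched against the exponentially small tail $u\ge\delta$); and compactness arguments identify $\Phi_0$ but produce no rates. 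What is required is a uniform two-large-parameter asymptotic analysis of $_2F_1(m,1-\frac n2;m+\frac n2;t)$ near $t=1$ — doable, and it would yield more information, but it is the bulk of the work and is not carried out. As written, your argument yields at most the leading term (essentially Proposition~\ref{PZ} for $s>-1$), and even that still needs the majorant in (iii). If you wish to complete the proof along your lines, the most economical way is probably to extract the uniform expansion of $\phi_m$ and its derivatives directly from the explicit logarithmic representation \eqref{Erdelyiformula} (where, as you observe, the spurious $\log m$'s cancel against $\psi(m+\cdot)$), rather than from the ODE.
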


For the proof we are going to need several lemmas.
\begin{lemma} Let
$$ c_{j,k}(m) := (m)_j\frac{\Delta_-^j}{j!}(2y-m)^k|_{y=0},  $$
where $\Delta_-$ is the backward difference operator, i.e.
$$ \Delta_{-}f(y) = f(y)-f(y-1).  $$
Then
$$ I_{m,k}(s) = \frac{\Gamma(1+\frac n2+s)(n-1)_m^2}{\Gamma(\frac n2)\Gamma(s+1)(\frac n2)_m^2} \sum_{j,l=0}^k c_{j,k}(m) c_{l,k}(m) \tilde I_{j,l}, $$
where
$$ \tilde I_{j,l} := \int_0^1 t^{m+\frac n2-1}(1-t)^s \FF21{m+j,1-\frac n2}{m+\frac n2}t \FF21{m+l,1-\frac n2}{m+\frac n2}t \,dt.  $$
\end{lemma}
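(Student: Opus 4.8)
The plan is to reduce everything to a power-series computation near $t=0$. Write out
$$ t^{m/2}S_m(t)=\frac{(n-1)_m}{(\tfrac n2)_m}\sum_{i=0}^\infty\frac{(m)_i(1-\tfrac n2)_i}{(m+\tfrac n2)_i\,i!}\,t^{m/2+i}, $$
which has radius of convergence $1$. Since $2t\partial_t$ acts diagonally on monomials, $(2t\partial_t)^k t^{m/2+i}=(m+2i)^k\,t^{m/2+i}$, so term-by-term differentiation (legitimate on $[0,1)$, inside the radius of convergence) gives
$$ (2t\partial_t)^k\bigl(t^{m/2}S_m(t)\bigr)=\frac{(n-1)_m}{(\tfrac n2)_m}\sum_{i=0}^\infty\frac{(m)_i(1-\tfrac n2)_i}{(m+\tfrac n2)_i\,i!}\,(m+2i)^k\,t^{m/2+i}. $$
The first key step is a purely combinatorial identity: since $i\mapsto(m+2i)^k$ is a polynomial of degree $k$ and $\{i\mapsto(m+i)_j\}_{j=0}^k$ is a basis of the polynomials in $i$ of degree $\le k$, there are unique $\gamma_j(m)$ with $(m+2i)^k=\sum_{j=0}^k\gamma_j(m)\,(m+i)_j$. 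Using the one-line fact that the backward difference obeys $\Delta_-(y)_j=j\,(y)_{j-1}$, one gets $\Delta_-^l(y)_j|_{y=0}=l!$ when $j=l$ and $0$ otherwise; so, after the substitution $y=m+i$ (under which $(m+i)_j=(y)_j$ and $(m+2i)^k=(2y-m)^k$), one reads off $\gamma_j(m)=\tfrac1{j!}\,\Delta_-^j(2y-m)^k|_{y=0}$, whence $c_{j,k}(m)=(m)_j\gamma_j(m)$ is exactly the quantity in the statement.

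The second step is to re-sum. From $(m+i)_j=(m)_j(m+j)_i/(m)_i$ one obtains, for each fixed $j$,
$$ \sum_{i=0}^\infty\frac{(m)_i(1-\tfrac n2)_i}{(m+\tfrac n2)_i\,i!}\,(m+i)_j\,t^{m/2+i}=(m)_j\,t^{m/2}\,\FF21{m+j,1-\tfrac n2}{m+\tfrac n2}t, $$
so, substituting the combinatorial identity,
$$ (2t\partial_t)^k\bigl(t^{m/2}S_m(t)\bigr)=\frac{(n-1)_m}{(\tfrac n2)_m}\sum_{j=0}^k c_{j,k}(m)\,t^{m/2}\,\FF21{m+j,1-\tfrac n2}{m+\tfrac n2}t $$
as an identity of functions on $[0,1)$. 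Here one uses that $k<n-1$ (the range relevant in Propositions \ref{PZ} and \ref{prop2}), so that every $_2F_1$ occurring in this finite sum has $c-a-b=n-1-j>0$ and hence, by Gauss's theorem \cite[\S2.1~(14)]{BE}, extends continuously to $t=1$; consequently the right-hand side is bounded on all of $[0,1]$.

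Finally I square this identity, multiply by $t^{\frac n2-1}(1-t)^s$, integrate over $[0,1]$, and multiply by $\Gamma(\tfrac n2+s+1)/(\Gamma(\tfrac n2)\Gamma(s+1))$. Integrability at $t=1$ is guaranteed by $s>-1$ together with the boundedness just noted, and as the sum over $j$ is finite one may freely expand the square and integrate term by term; the integral attached to the $(j,l)$-term is precisely $\tilde I_{j,l}$, and collecting the constants $\Gamma(\tfrac n2+s+1)/(\Gamma(\tfrac n2)\Gamma(s+1))$ and $(n-1)_m^2/(\tfrac n2)_m^2$ produces the asserted formula. The only genuinely delicate point is the combinatorial identity — in particular pinning down the backward-difference coefficients and the change of variables that turns $(m+2i)^k$ into $(2y-m)^k$; everything else is bookkeeping.
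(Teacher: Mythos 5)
Your proposal is correct and is essentially the paper's own argument: your combinatorial identity $(m+2i)^k=\sum_j \gamma_j(m)(m+i)_j$ is exactly the paper's expansion $(2x)^k=\sum_j \frac{c_{j,k}(m)}{(m)_j}(x+\frac m2)_j$ evaluated at $x=i+\frac m2$, and your coefficient-level resummation via $(m+i)_j=(m)_j(m+j)_i/(m)_i$ is the series form of the paper's operator identity $(t\partial_t+\frac m2)_j\,t^{m/2}\,{}_2F_1(m,1-\frac n2;m+\frac n2;t)=(m)_j\,t^{m/2}\,{}_2F_1(m+j,1-\frac n2;m+\frac n2;t)$. The backward-difference verification of $\gamma_j$ and the convergence remarks you supply are correct and merely fill in details the paper leaves implicit.
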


\begin{proof} The proof is based on the fact that for all~$x$:
\[ (2x)^k = \sum_{j=0}^k \frac{c_{j,k}(m)}{(m)_j} \Big(x+\frac m2\Big)_j ,  \label{PB30} \]
and the fact that
$$ \Big(t\partial_t+\frac m2\Big)_j t^{\frac m2} \FF21{m,1-\frac n2}{m+\frac n2}t
 = (m)_j t^{\frac m2} \FF21{m+j,1-\frac n2}{m+\frac n2}t. $$
\end{proof}

\begin{remark} \label{asyc}
Replacing $x$ by $mx+\frac m2$ in \eqref{PB30}, dividing by $m^k$ and letting $m\to \infty$ shows that
\[ c_{j,k}(m) \sim m^k \binom kj (-1)^{k-j} 2^j \qquad \text{as } m\to\infty. \label{PBc}  \]
With a little more effort, it~is possible to obtain in the same way the complete asymptotic expansion of $c_{j,k}(m)$
in decreasing powers of~$m$. \qed \end{remark}

\begin{lemma} Let $\sigma_1:=c+d-a-b$, $\sigma_2:=\gamma-\alpha-\beta$, $\sigma:=\sigma_1+\sigma_2$. Then
\begin{align} \label{Lemma3eq}
&\int_0^1 t^{c-1}(1-t)^{d-1} \FF21{a,b}ct \FF21{\alpha,\beta}\gamma t \,dt
 = \GG{c,\gamma,d,\sigma_1,\sigma}{\gamma-\beta,\sigma_1+a,\sigma_1+b,\sigma+\beta} \\
& \nonumber\qquad \times \sum_{k=0}^\infty \frac{(\beta)_k (\sigma_1)_k (\sigma_1+a-\alpha)_k (d)_k}{(\sigma+\beta)_k (\sigma_1+a)_k (\sigma_1+b)_k k!}
 \FF32{\beta+k,\sigma_1+k,c-a}{\sigma+\beta+k,\sigma_1+b+k}1,
\end{align}
whenever both sides makes sense.
\end{lemma}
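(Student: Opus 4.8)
The plan is to reduce the double integral to a classical integral of a product of two Gauss hypergeometric functions, and then apply a known evaluation of that integral as a terminating hypergeometric series. The natural tool is the formula of Bailey (see also Erd\'elyi's tables) for $\int_0^1 t^{c-1}(1-t)^{d-1}\,{}_2F_1(a,b;c;t)\,{}_2F_1(\alpha,\beta;\gamma;t)\,dt$ in terms of a single-sum of ${}_3F_2$'s. Concretely, I~would start from the expansion of one of the two factors, say $\FF21{\alpha,\beta}{\gamma}{t}=\sum_{k\ge0}\frac{(\alpha)_k(\beta)_k}{(\gamma)_k k!}t^k$, substitute it into the integral, and interchange sum and integral; the resulting integrals are $\int_0^1 t^{c+k-1}(1-t)^{d-1}\,\FF21{a,b}{c}{t}\,dt$, each of which is an Euler-type integral evaluable by the Euler integral representation of ${}_2F_1$ (or by expanding the remaining ${}_2F_1$ and using the beta integral, followed by Gauss summation). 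This yields a double sum, and the art is to re-sum one of the two indices into a ${}_3F_2$ so that the outer sum has the stated Pochhammer structure.

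The key steps, in order, are: first, establish convergence/absolute convergence so that the term-by-term integration is legitimate — this is where the condition ``whenever both sides make sense'' enters, i.e. one assumes $\Re c>0$, $\Re d>0$, and enough positivity of $\sigma_1$, $\sigma$, $\sigma+\beta$ for the series to converge (or one works first in a region where everything is a polynomial/terminating, then continues analytically in the parameters). Second, carry out the Euler-integral evaluation of the inner one-variable integral, producing a factor $\GG{c,d}{c+d}$ times a ${}_2F_1$ at argument $1$ or a shifted ${}_3F_2$; here one uses that $\int_0^1 t^{c+k-1}(1-t)^{d-1}\,{}_2F_1(a,b;c;t)\,dt=\GG{c+k,d}{c+d+k}\,\FF32{a,b,c+k}{c,c+d+k}{1}$ by the standard manipulation. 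Third, substitute back, collect the $(\alpha)_k(\beta)_k/(\gamma)_k k!$ weight, and rewrite the product $\frac{(\alpha)_k(\beta)_k}{(\gamma)_k k!}\,\GG{c+k,d}{c+d+k}$ using $\Gamma(c+k)=\Gamma(c)(c)_k$ etc., so that the outer sum acquires Pochhammer symbols in $k$. Fourth — the genuinely delicate part — identify the Gamma-prefactor $\GG{c,\gamma,d,\sigma_1,\sigma}{\gamma-\beta,\sigma_1+a,\sigma_1+b,\sigma+\beta}$ and the shifted arguments $\beta+k$, $\sigma_1+k$, $c-a$, $\sigma+\beta+k$, $\sigma_1+b+k$ inside the ${}_3F_2$: this requires applying a contiguous/transformation identity (an Euler or Pfaff transformation, or Thomae's relations for ${}_3F_2(1)$) to the ${}_3F_2$ that comes out of step two, so that it matches the symmetric-looking right-hand side. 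In other words, the raw evaluation gives \emph{some} single-sum formula, and one must massage it by a well-chosen ${}_3F_2$ transformation to land on exactly the displayed form.

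I~expect the main obstacle to be precisely that last bookkeeping: matching the prefactor and the internal parameters of the ${}_3F_2$ after the hypergeometric transformation. There is considerable freedom in which variable to expand first and which ${}_3F_2$ transformation to invoke, and the symmetric roles of $(a,b,c)$ versus $(\alpha,\beta,\gamma)$ in the statement are broken by the proof, so one has to verify that the asymmetric-looking result is in fact equivalent to the stated symmetric-looking one. A~clean way to control this is to check the identity first in the terminating case — take $\alpha$ or $a$ a negative integer so both sides are finite rational functions of the remaining parameters — verify equality there by a finite induction or by matching residues, and then invoke analytic continuation in the parameters to remove the terminating restriction, the series on the right converging in the stated region by the $\sigma_1$, $\sigma$ conditions. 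An~alternative, if the direct route proves too messy, is to cite the formula directly from Bailey's \emph{Generalized Hypergeometric Series} or from \cite{BE}, since \eqref{Lemma3eq} is essentially a known classical evaluation; but giving the short self-contained derivation above is preferable for the paper's self-containedness.
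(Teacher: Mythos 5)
There is a genuine gap. Your forward strategy (expand $\FF21{\alpha,\beta}\gamma t$, integrate term by term, evaluate each $\int_0^1 t^{c+k-1}(1-t)^{d-1}\FF21{a,b}ct\,dt$ as $\GG{c+k,d}{c+d+k}\FF32{a,b,c+k}{c,c+d+k}1$) is correct as far as it goes, but it terminates exactly where the lemma begins. What you obtain is a double sum with outer weight $\frac{(\alpha)_k(\beta)_k}{(\gamma)_k k!}\frac{(c)_k}{(c+d)_k}$ and an inner ${}_3F_2(1)$ in which only one numerator and one denominator parameter carry the shift $+k$; the target has an outer weight of ${}_4F_3$ type, $\frac{(\beta)_k(\sigma_1)_k(\sigma_1+a-\alpha)_k(d)_k}{(\sigma+\beta)_k(\sigma_1+a)_k(\sigma_1+b)_k\,k!}$, and an inner ${}_3F_2$ with \emph{four} parameters shifted by $k$ and the fixed entry $c-a$. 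No single Thomae/Euler transformation of the inner ${}_3F_2$ can bridge this: a genuine reorganization of the double sum is required, new parameter combinations ($\sigma_1+a-\alpha$, $\sigma+\beta$, $c-a$) must emerge, and you explicitly defer this step as ``the art'' and ``the main obstacle'' without indicating how to carry it out. That step \emph{is} the lemma, so the proposal is a plan rather than a proof. The fallback of citing Bailey or \cite{BE} is not a derivation, and the identity in this exact form is not simply quotable from those tables.

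For contrast, the paper avoids any resummation by running the argument \emph{backwards} from the right-hand side through a chain of integral representations: the inner ${}_3F_2(1)$ is written as a double Euler integral, the $k$-sum is absorbed into a ${}_2F_1$ which is in turn given a single-integral representation, the $y$-integration produces an Appell $F_1$ that collapses to a ${}_2F_1$ via the reduction \cite[16.16.1]{NIST}, the $t$-integration is handled by Lemma~2.10 of \cite{Ur} (the evaluation of $\int_0^1 t^{c-1}(1-t)^{d-1}\FF21{a,b}ct(1-tx)^{-\alpha}dt$), and a final $x$-integration yields the left-hand side; validity is then extended by analytic continuation in the parameters. Every step there is a one-line substitution or a quoted formula, precisely so that the delicate rearrangement your proposal leaves open never has to be confronted. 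If you want to salvage the forward approach, you would need to actually exhibit the interchange and resummation of the double sum (for instance by first proving the identity with $\alpha$ a negative integer, as you suggest, and then continuing analytically), but as written the decisive step is absent.
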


\begin{remark}
A sufficient condition for the integral in \eqref{Lemma3eq} to converge is $c>0$, $\sigma_1>d>0$, $\sigma_2>0$.

A sufficient condition for the right-hand side to converge is $\gamma>\beta$, $\sigma_2+d>0$ (except for pathological values of parameters).

The condition $\sigma_2+d>0$ is needed for the value ${}_3 F_2(1)$ to exist, and $\gamma>\beta$ ensures that the whole series converges.
This can be seen employing the well known formula \cite[16.4.11]{NIST}
\[ \label{3F2trans} \FF32{a_1,a_2,a_3}{c_1,c_2}1 = \GG{c_2,c_1+c_2-a_1-a_2-a_3}{c_2+c_1-a_1-a_2,c_2-a_3} \FF32{a_3,c_1-a_1,c_1-a_2}{c_1,c_1+c_2-a_1-a_2}1. \]
Hence in our case
$$ \FF32{\beta+k,\sigma_1+k,c-a}{\sigma+\beta+k,\sigma_1+b+k}1 = \GG{\sigma_1+b+k,\sigma_2+d}{\sigma+b,d+k} \FF32{\sigma,\sigma_2+\beta,c-a}{\sigma+\beta+k,\sigma+b}1. $$
Therefore
$$ \FF32{\beta+k,\sigma_1+k,c-a}{\sigma+\beta+k,\sigma_1+b+k}1 \sim \frac{\Gamma(\sigma_2+d)}{\Gamma(\sigma+b)} k^{c-a} \qquad \text{as }k\to \infty, $$
by the known large parameter asymptotics of $\,_3 F_2$. See \cite[16.11.10]{NIST}.
Taking into account the behavior of all the Pochhammer symbols in the series,
we obtain that the $k$-th term behaves like $k^{\beta-\gamma-1}$.
Thus, indeed, $\gamma>\beta$ is sufficient for convergence.  \qed \end{remark}

\begin{proof} Denote the right hand side of \eqref{Lemma3eq} by~$I$.
First we represent the $\!_3 F_2$ function in $I$ by a double integral:
\begin{align*}
\FF32{\beta+k,\sigma_1+k,c-a}{\sigma+\beta+k,\sigma_1+b+k}1
&= \frac{(\sigma+\beta)_k (\sigma_1+b)_k}{(\beta)_k (\sigma_1)_k} \GG{\sigma+\beta,\sigma_1+b}{\beta,\sigma_1,\sigma,b} \\
& \qquad \times \int_0^1 \int_0^1 x^{\beta+k-1} (1-x)^{\sigma-1} y^{\sigma_1+k-1} (1-y)^{b-1} (1-xy)^{a-c} \,dx \,dy,
\end{align*}
and then swap the order of integration and summation so that $I$ becomes
\begin{align*}
I &= \GG{c,\gamma,d}{\gamma-\beta,\sigma_1+a,\beta,b} \\
& \qquad\times \int_0^1 \int_0^1 x^{\beta-1} (1-x)^{\sigma-1} y^{\sigma_1-1} (1-y)^{b-1} (1-xy)^{a-c}
 \FF21{\sigma_1+a-\alpha,d}{\sigma_1+a}{xy} \,dx\,dy.
\end{align*}
Next we represent the $\!_2 F_1$ function by a single integral:
$$ \FF21{\sigma_1+a-\alpha,d}{\sigma_1+a}{xy} = \GG{\sigma_1+a}{\alpha,\sigma_1+a-\alpha}
 \int_0^1 t^{\sigma_1+a-\alpha-1} (1-t)^{\alpha-1} (1-txy)^{-d} \,dt, $$
yielding
\begin{align*}
I &= \GG{c,\gamma,d}{\gamma-\beta,\beta,b,\alpha,\sigma_1+a-\alpha} \\
& \qquad \times \int_0^1 \int_0^1 \int_0^1 x^{\beta-1} (1-x)^{\sigma-1} y^{\sigma_1-1} (1-y)^{b-1}
 t^{\sigma_1+a-\alpha-1} (1-t)^{\alpha-1} (1-xy)^{a-c}(1-txy)^{-d} \,dt\,dx\,dy.
\end{align*}
Integrating with respect to the $y$ variable produces
\begin{align*}
I &= \GG{c,\gamma,d,\sigma_1}{\gamma-\beta,\beta,\sigma_1+b,\alpha,\sigma_1+a-\alpha} \\
& \qquad \times \int_0^1 \int_0^1 x^{\beta-1} (1-x)^{\sigma-1} t^{\sigma_1+a-\alpha-1} (1-t)^{\alpha-1}
 \FE1{\sigma_1;c-a,d}{\sigma_1+b}{x,tx} \,dt\,dx,
\end{align*}
where $F_1$ is the first Appell hypergeometric function.
The~well known transformation rule \cite[16.16.1]{NIST}
$$ \FE1{a;b_1,b_2}{b_1+b_2}{x,y} =(1-x)^{-a} \FF21{a,b_2}{b_1+b_2}{\frac{x-y}{x-1}} $$
implies
$$ \FE1{\sigma_1;c-a,d}{\sigma_1+b}{x,tx} = (1-x)^{-\sigma_1} \FF21{\sigma_1,d}{\sigma_1+b}{\frac x{x-1}(1-t)}. $$
Substituting this into the last formula for $I$ we get
\begin{align*}
I &= \GG{c,\gamma,d,\sigma_1}{\gamma-\beta,\beta,\sigma_1+b,\alpha,\sigma_1+a-\alpha} \\
& \qquad \times \int_0^1 \int_0^1 x^{\beta-1} (1-x)^{\sigma_2-1} t^{\sigma_1+a-\alpha-1} (1-t)^{\alpha-1}
 \FF21{\sigma_1,d}{\sigma_1+b}{\frac x{x-1}(1-t)} \,dt\,dx.
\end{align*}
This can be integrated with respect to~$t$:
\begin{align*}
I &= \GG{c,\gamma,d,\sigma_1}{\gamma-\beta,\beta,\sigma_1+a,\sigma_1+b} \int_0^1 x^{\beta-1} (1-x)^{\sigma_2-1}
 \FF32{\sigma_1,d,\alpha}{\sigma_1+b,\sigma_1+a}{\frac x{x-1}} \,dx.
\end{align*}
Now we employ Lemma 2.10 in \cite{Ur} which asserts that
\begin{align*}
\int_0^1 t^{c-1}(1-t)^{d-1} \FF21{a,b}ct (1-tx)^{-\alpha} \,dt
&= \GG{c,d,\sigma_1}{\sigma_1+a,\sigma_1+b} (1-x)^{-\alpha} \\
& \qquad \times  \FF32{\sigma_1,d,\alpha}{\sigma_1+b,\sigma_1+a}{\frac x{x-1}}.
\end{align*}
Using this we get
\begin{align*}
I &= \GG{\gamma}{\gamma-\beta,\beta} \int_0^1 \int_0^1 x^{\beta-1} (1-x)^{\gamma-\beta-1}
 t^{c-1} (1-t)^{d-1} \FF21{a,b}{c}t (1-tx)^{-\alpha} \,dt\,dx.
\end{align*}
A~final integration with respect to $x$ gives us what we want:
\begin{align*}
I &= \int_0^1 t^{c-1} (1-t)^{d-1} \FF21{a,b}ct \FF21{\alpha,\beta}\gamma t \,dt.
\end{align*}

This proves \eqref{Lemma3eq} for values of $a$, $b$, $c$, $d$, $\alpha$, $\beta$, $\gamma$
for which all the operations above make sense and all integrals and series converge uniformly.
But since both sides of \eqref{Lemma3eq} are meromorphic functions (of~all parameters) in their
respective domains, we~can extend the validity of this argument by analytic continuation.
\end{proof}

\begin{corollary} \label{cor1}
For $n+s>l$ and $m+n>1$,
\begin{align} \label{Cor1eq}
\frac{(n-1)_m^2}{(\frac n2)_m^2} \tilde I_{j,l}
&=\GG{m+n-1}{m+n+s} \GG{\frac n2,\frac n2,s+1,n+s-j,2n+s-1-j-l}{n-1,n-1,\frac n2+s+1-j,\frac{3n}{2}+s-j-l} \\
& \nonumber \qquad \times \sum_{k=0}^\infty \frac{(1-\frac n2)_k (n+s-j)_k (n+s-l)_k (s+1)_k}{(\frac{3n}2+s-j-l)_k (m+n+s)_k (\frac n2+s+1-j)_k k!} \\
& \hskip6em \nonumber \times \FF32{1-\frac n2+k,n+s-j+k,\frac n2-j}{\frac{3n}2+s-j-l+k,\frac n2+s+1-j+k}1.
\end{align}
\end{corollary}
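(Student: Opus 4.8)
The plan is to obtain \eqref{Cor1eq} as a straightforward specialization of the master identity \eqref{Lemma3eq}. First I would apply \eqref{Lemma3eq} with the parameters
$$ c=m+\tfrac n2,\quad d=s+1,\quad a=m+j,\quad b=1-\tfrac n2,\quad \alpha=m+l,\quad \beta=1-\tfrac n2,\quad \gamma=m+\tfrac n2, $$
for which the left-hand side of \eqref{Lemma3eq} is precisely $\tilde I_{j,l}$. One point must be watched: the left-hand side of \eqref{Lemma3eq} is symmetric under $a\leftrightarrow b$ and under $\alpha\leftrightarrow\beta$, but the right-hand side is not, so the role assignment matters. The choice above --- with the ``large'' parameters $m+j$ and $m+l$ in the slots $a$ and $\alpha$ rather than $b$ and $\beta$ --- is the one that makes $c-a=\frac n2-j$ and $\sigma_1+a-\alpha=n+s-l$ independent of $m$, which is what yields the compact shape of \eqref{Cor1eq}; the other choice would produce $\!_3F_2$ parameters growing with $m$.

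Next I would simply read off the resulting quantities: with the above substitution $\sigma_1=c+d-a-b=n+s-j$, $\sigma_2=\gamma-\alpha-\beta=n-1-l$, $\sigma=2n+s-1-j-l$, and also $\gamma-\beta=m+n-1$, $\sigma_1+a=m+n+s$, $\sigma_1+b=\frac n2+s+1-j$, $\sigma+\beta=\frac{3n}2+s-j-l$, $\sigma_1+a-\alpha=n+s-l$, $c-a=\frac n2-j$. The Gamma prefactor $\GG{c,\gamma,d,\sigma_1,\sigma}{\gamma-\beta,\sigma_1+a,\sigma_1+b,\sigma+\beta}$ then equals $\GG{m+\frac n2,m+\frac n2,s+1,n+s-j,2n+s-1-j-l}{m+n-1,m+n+s,\frac n2+s+1-j,\frac{3n}2+s-j-l}$, and multiplying by $\frac{(n-1)_m^2}{(\frac n2)_m^2}=\GG{m+n-1,m+n-1,\frac n2,\frac n2}{m+\frac n2,m+\frac n2,n-1,n-1}$ cancels both factors $\Gamma(m+\frac n2)$ and drops one factor $\Gamma(m+n-1)$ from the denominator, leaving exactly the prefactor $\GG{m+n-1}{m+n+s}\GG{\frac n2,\frac n2,s+1,n+s-j,2n+s-1-j-l}{n-1,n-1,\frac n2+s+1-j,\frac{3n}2+s-j-l}$ appearing in \eqref{Cor1eq}. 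Likewise the summand $\frac{(\beta)_k(\sigma_1)_k(\sigma_1+a-\alpha)_k(d)_k}{(\sigma+\beta)_k(\sigma_1+a)_k(\sigma_1+b)_k\,k!}$ of the outer series becomes $\frac{(1-\frac n2)_k(n+s-j)_k(n+s-l)_k(s+1)_k}{(\frac{3n}2+s-j-l)_k(m+n+s)_k(\frac n2+s+1-j)_k\,k!}$, and the inner factor $\FF32{\beta+k,\sigma_1+k,c-a}{\sigma+\beta+k,\sigma_1+b+k}1$ becomes $\FF32{1-\frac n2+k,\,n+s-j+k,\,\frac n2-j}{\frac{3n}2+s-j-l+k,\,\frac n2+s+1-j+k}1$; assembling the pieces reproduces \eqref{Cor1eq}.

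The only thing requiring genuine care is the legitimacy of invoking \eqref{Lemma3eq}, since that identity is asserted only ``whenever both sides make sense.'' Here I would note that the standing assumption $s>-1$ already makes the integral $\tilde I_{j,l}$ convergent (indeed holomorphic in $s$, with meromorphic continuation), while by the Remark following \eqref{Lemma3eq} the right-hand side converges provided $\gamma>\beta$ and $\sigma_2+d>0$; under the present substitution these read $m+n-1>0$ and $(n-1-l)+(s+1)>0$, i.e. $m+n>1$ and $n+s>l$, which are exactly the stated hypotheses of the corollary. Since both sides of \eqref{Lemma3eq} are meromorphic in all parameters, equality on a sub-domain (where the stronger conditions of the Remark hold) propagates by analytic continuation to the whole region $\{m+n>1,\ n+s>l\}$ minus the poles of the Gamma factors. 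I therefore do not expect any real obstacle beyond bookkeeping; the subtle point is the non-symmetry of \eqref{Lemma3eq} noted above, which pins down the correct parameter dictionary.
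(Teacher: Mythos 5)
Your proposal is correct and is exactly the paper's argument: the paper's proof consists of the single line ``Apply the last lemma to the integral defining $\tilde I_{j,l}$,'' and your parameter dictionary $c=\gamma=m+\tfrac n2$, $d=s+1$, $a=m+j$, $\alpha=m+l$, $b=\beta=1-\tfrac n2$ (giving $\sigma_1=n+s-j$, $\sigma_2=n-1-l$, $\sigma=2n+s-1-j-l$) together with the cancellation against $(n-1)_m^2/(\tfrac n2)_m^2$ reproduces \eqref{Cor1eq} precisely, with the hypotheses $m+n>1$ and $n+s>l$ matching the convergence conditions $\gamma>\beta$ and $\sigma_2+d>0$ from the Remark. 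Your observation that the asymmetry of \eqref{Lemma3eq} in $a\leftrightarrow b$ forces the particular slot assignment (so that $c-a$ and $\sigma_1+a-\alpha$ stay $m$-independent) is a worthwhile detail the paper leaves implicit.
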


\begin{proof} Apply the last lemma to the integral defining $\tilde I_{j,l}$. \end{proof}

\begin{corollary} For $n+s>l$, $n+s>j$ and $s>-1$, as $m\to \infty$:
\begin{align*}
\frac{(n-1)_m^2}{(\frac n2)_m^2} \tilde I_{j,l}
&\sim m^{-s-1} \GG{\frac n2,\frac n2,s+1,n+s-j,n+s-l}{n-1,n-1,\frac n2+s+1-j,\frac n2+s+1-l} \\
& \qquad \times \FF32{1-\frac n2,s+1,1-\frac n2}{\frac n2+s+1-j,\frac n2+s+1-l}1.
\end{align*}
\end{corollary}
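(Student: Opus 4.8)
The plan is to start from the exact evaluation \eqref{Cor1eq} of Corollary~\ref{cor1} and extract its leading term as $m\to\infty$. In the prefactor of \eqref{Cor1eq} the only $m$-dependent factor is $\GG{m+n-1}{m+n+s}$, which by the standard asymptotics $\Gamma(m+a)/\Gamma(m+b)\sim m^{a-b}$ equals $m^{-s-1}(1+o(1))$. In the series, the $k$-th summand depends on $m$ only through the factor $1/(m+n+s)_k$; for $m\ge m_0$ this summand is dominated term by term by the corresponding summand at $m=m_0$, and the latter series converges absolutely (its general term is $O(k^{-2})$, by an estimate entirely analogous to the one for the series in the proof of Proposition~\ref{PA}, using the large-$k$ asymptotics of the inner ${}_3F_2(1)$ noted in the remarks above). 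Hence one may pass to the limit termwise: for $k\ge1$ the factor $1/(m+n+s)_k$ tends to $0$, whereas the $k=0$ summand — which equals $\FF32{1-\frac n2,n+s-j,\frac n2-j}{\frac{3n}2+s-j-l,\frac n2+s+1-j}1$ — does not depend on $m$ at all. Since $n+s>l$, the parameter excess $n+s-l$ of this ${}_3F_2(1)$ is positive, so it converges. Combining the two observations,
$$ \frac{(n-1)_m^2}{(\frac n2)_m^2}\tilde I_{j,l}\;\sim\;m^{-s-1}\,\GG{\frac n2,\frac n2,s+1,n+s-j,2n+s-1-j-l}{n-1,n-1,\frac n2+s+1-j,\frac{3n}{2}+s-j-l}\,\FF32{1-\frac n2,n+s-j,\frac n2-j}{\frac{3n}2+s-j-l,\frac n2+s+1-j}1. $$

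It remains to recognise the right-hand side as the claimed symmetric expression, and for this I would apply the transformation \eqref{3F2trans} to the last hypergeometric factor, taking the distinguished upper parameter to be $a_3=1-\frac n2$ and setting $c_1=\frac n2+s+1-j$, $c_2=\frac{3n}2+s-j-l$. The arithmetic identity that makes this choice work is $(n+s-j)+(\tfrac n2-j)=\tfrac{3n}2+s-2j$: it forces the transformed lower parameters $c_1$ and $c_1+c_2-a_1-a_2$ to equal $\frac n2+s+1-j$ and $\frac n2+s+1-l$, and the transformed upper parameters $a_3,\;c_1-a_1,\;c_1-a_2$ to equal $1-\frac n2,\;1-\frac n2,\;s+1$. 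One obtains
$$ \FF32{1-\frac n2,n+s-j,\frac n2-j}{\frac{3n}2+s-j-l,\frac n2+s+1-j}1=\GG{\frac{3n}2+s-j-l,\,n+s-l}{\frac n2+s+1-l,\,2n+s-1-j-l}\,\FF32{1-\frac n2,1-\frac n2,s+1}{\frac n2+s+1-j,\frac n2+s+1-l}1. $$
Substituting this into the asymptotic relation above and cancelling the factors $\Gamma(\frac{3n}2+s-j-l)$ and $\Gamma(2n+s-1-j-l)$, which now appear in both numerator and denominator of the combined $\Gamma$-quotient, leaves precisely $m^{-s-1}\GG{\frac n2,\frac n2,s+1,n+s-j,n+s-l}{n-1,n-1,\frac n2+s+1-j,\frac n2+s+1-l}\FF32{1-\frac n2,1-\frac n2,s+1}{\frac n2+s+1-j,\frac n2+s+1-l}1$, which is the assertion once we recall that the upper parameters of a ${}_3F_2$ may be permuted.

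The routine parts are the $\Gamma$-ratio asymptotics and the termwise passage to the limit in the series; the real work is the last step — identifying the right parameter assignment in \eqref{3F2trans} so that the transformed series acquires the repeated upper parameter $1-\frac n2$ together with the symmetric pair of lower parameters, and then checking that the transformation's $\Gamma$-prefactor fuses with that of \eqref{Cor1eq} into the clean symmetric quotient. Throughout one uses the standing hypotheses $s>-1$, $n+s>j$, $n+s>l$ to keep all $\Gamma$-values finite and nonzero and both ${}_3F_2(1)$'s convergent (or, where necessary, interpreted via analytic continuation, exactly as in the derivation of \eqref{Lemma3eq}).
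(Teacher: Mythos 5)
Your proposal is correct and follows essentially the same route as the paper, whose proof is the one-line instruction to apply \eqref{3F2trans} to the first ($k=0$) term of \eqref{Cor1eq}; you have simply made explicit the $\Gamma$-ratio asymptotics, the dominated termwise passage to the limit (only $k=0$ survives since $1/(m+n+s)_k\to0$ for $k\ge1$), and the parameter bookkeeping in the transformation, all of which check out. No gaps.
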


\begin{proof}
Apply the formula \eqref{3F2trans} to the first term in~\eqref{Cor1eq}.
\end{proof}

\begin{corollary} \label{cor3}
For $s>-1$ and $k<n-1$, as $m\to\infty$:
\begin{align} \label{leadingorder}
I_{m,k}(s) &\sim m^{2k-s-1} \GG{\frac n2,1+\frac n2+s,n+s-k,n+s-k}{n-1,n-1,\frac n2+1+s-k,\frac n2+1+s-k}  \\
& \nonumber \qquad \times \sum_{j=0}^\infty \frac{(1-\frac n2)_j^2 (s+1)_j}{(1+\frac n2+s-k)_j^2 j!} \FF21{-k,1-\frac n2+j}{1+\frac n2+s-k+j}{-1} ^2.
\end{align}
\end{corollary}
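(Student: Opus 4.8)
The plan is to combine the two asymptotic ingredients already established. Recall from the lemmas above that
$$ I_{m,k}(s) = \frac{\Gamma(1+\frac n2+s)(n-1)_m^2}{\Gamma(\frac n2)\Gamma(s+1)(\frac n2)_m^2}\sum_{j,l=0}^k c_{j,k}(m)c_{l,k}(m)\tilde I_{j,l}. $$
Into this I would substitute, term by term (the sum over $j,l$ is finite, so there is nothing to justify), the expansion \eqref{PBc}, which gives $c_{j,k}(m)c_{l,k}(m)\sim m^{2k}(-1)^{j+l}\binom kj\binom kl 2^{j+l}$, together with the asymptotics of $\frac{(n-1)_m^2}{(\frac n2)_m^2}\tilde I_{j,l}$ furnished by the corollary immediately preceding. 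Absorbing the factor $(n-1)_m^2/(\frac n2)_m^2$ into the latter, collecting the powers of $m$, and cancelling $\Gamma(\frac n2)\Gamma(s+1)$ against the corresponding factors in the $\Gamma$-quotient, one is left with
$$ I_{m,k}(s)\sim m^{2k-s-1}\,\GG{\frac n2,1+\frac n2+s}{n-1,n-1}\sum_{j,l=0}^k(-1)^{j+l}\binom kj\binom kl 2^{j+l}\GG{n+s-j,n+s-l}{\frac n2+s+1-j,\frac n2+s+1-l}\FF32{1-\frac n2,s+1,1-\frac n2}{\frac n2+s+1-j,\frac n2+s+1-l}{1}. $$

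The next step is to linearize this finite double sum. Expanding the $_3F_2$ into its defining series with a new index $i$ and using $\GG{n+s-j}{\frac n2+s+1-j}/(\frac n2+s+1-j)_i=\Gamma(n+s-j)/\Gamma(\frac n2+s+1-j+i)$, the $(j,l)$-summand factors as a function of $j$ times the same function of $l$; interchanging the (finite) $j,l$-sums with the $i$-series then turns the double sum into
$$ \sum_{i=0}^\infty\frac{(1-\frac n2)_i^2(s+1)_i}{i!}\bigg(\sum_{j=0}^k(-1)^j\binom kj 2^j\frac{\Gamma(n+s-j)}{\Gamma(\frac n2+s+1-j+i)}\bigg)^{2}. $$
So everything reduces to evaluating the inner finite sum over $j$. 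Writing $(-1)^j\binom kj 2^j=\frac{(-k)_j}{j!}2^j$ and converting the Gamma quotient to Pochhammer symbols via $\Gamma(n+s-j)=(-1)^j\Gamma(n+s)/(1-n-s)_j$ and $1/\Gamma(\frac n2+s+1-j+i)=(-1)^j(-\frac n2-s-i)_j/\Gamma(\frac n2+s+1+i)$ (the two sign factors cancelling each other), the inner sum collapses to
$$ \frac{\Gamma(n+s)}{\Gamma(\frac n2+s+1+i)}\,\FF21{-k,-\frac n2-s-i}{1-n-s}{2}. $$

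The heart of the matter — and the only step I expect to require genuine care — is to move this terminating $_2F_1$ from argument $2$ to argument $-1$ with the parameters appearing in \eqref{leadingorder}. I would do this in two moves: first the Pfaff transformation $\FF21{-k,b}{c}{z}=(1-z)^{k}\FF21{-k,c-b}{c}{\frac z{z-1}}$ at $z=2$, which keeps the argument equal to $2$ but replaces $b$ by $c-b$; then the connection formula $\FF21{-k,b}{c}{z}=\frac{(c-b)_k}{(c)_k}\FF21{-k,b}{b-c-k+1}{1-z}$ (valid for a terminating $_2F_1$, the companion term dropping out because $1/\Gamma(-k)=0$), again at $z=2$, which now sends $2\mapsto-1$. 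Chaining the two yields
$$ \FF21{-k,-\frac n2-s-i}{1-n-s}{2}=(-1)^k\frac{(-\frac n2-s-i)_k}{(1-n-s)_k}\FF21{-k,1-\frac n2+i}{1+\frac n2+s-k+i}{-1}, $$
performed for $s$ outside the discrete set where $1-n-s$ is a nonpositive integer and then extended by continuity in $s$; convergence of the final $i$-series, whose terms decay like $i^{2k-2n-s-1}$, is guaranteed by $s>-1>2k-2n+1$ since $k<n-1$.

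It then remains to reassemble. Squaring the inner sum removes the sign $(-1)^k$ as well as all the signs inside $(-\frac n2-s-i)_k$ and $(1-n-s)_k$, and the Gamma/Pochhammer bookkeeping collapses using
$$ (1+\tfrac n2+s-k+i)_k=\frac{\Gamma(1+\frac n2+s+i)}{\Gamma(1+\frac n2+s-k+i)},\qquad (1-n-s)_k=(-1)^k\frac{\Gamma(n+s)}{\Gamma(n+s-k)}, $$
together with $\Gamma(1+\frac n2+s-k+i)=\Gamma(1+\frac n2+s-k)(1+\frac n2+s-k)_i$: the factors $\Gamma(n+s)$ and $\Gamma(1+\frac n2+s+i)$ cancel out, and what survives is exactly $\GG{\frac n2,1+\frac n2+s,n+s-k,n+s-k}{n-1,n-1,\frac n2+1+s-k,\frac n2+1+s-k}$ times $\sum_{i=0}^\infty\frac{(1-\frac n2)_i^2(s+1)_i}{(1+\frac n2+s-k)_i^2\,i!}\FF21{-k,1-\frac n2+i}{1+\frac n2+s-k+i}{-1}^2$, which (after renaming $i$ as $j$) is \eqref{leadingorder}. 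Everything outside the two-step hypergeometric transformation in the third paragraph is routine Gamma-function algebra.
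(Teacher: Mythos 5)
Your proposal is correct and follows essentially the same route as the paper: combine the lemma for $I_{m,k}(s)$ with the asymptotics of $c_{j,k}(m)$ and of $\tfrac{(n-1)_m^2}{(n/2)_m^2}\tilde I_{j,l}$, factor the finite double sum by expanding the ${}_3F_2$ so that the $(j,l)$-summand separates, and then convert the resulting terminating ${}_2F_1$ at argument $2$ into one at argument $-1$. The only (cosmetic) difference is the choice of transformation chain in the last step --- the paper goes $2\to\tfrac12\to-1$ via \cite[15.8.6]{NIST} followed by Pfaff, while you go via Pfaff at $z=2$ and a terminating connection formula --- and both produce the identical coefficient $(-1)^k(-\tfrac n2-s-i)_k/(1-n-s)_k$.
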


\begin{proof}
Combining the previous corollary, the definition of $I_{m,k}(s)$ and the asymptotics \eqref{PBc}
of the coefficients $c_{j,k}(m)$, $c_{l,k}(m)$ we get as $m\to \infty$:
\begin{align*}
I_{m,k}(s) &\sim m^{2k-s-1} \GG{\frac n2,n+s,n+s}{n-1,n-1,\frac n2+1+s} \\
& \qquad \times \sum_{j,l=0}^k \frac{(-k)_j (-k)_l (-\frac n2-s)_j (-\frac n2-s)_l}{j!k! (1-n-s)_j (1-n-s)_l}
 2^{j+l} \FF32{1-\frac n2,s+1,1-\frac n2}{\frac n2+s+1-j,\frac n2+s+1-l}1.
\end{align*}
Note that the series for $\!_3 F_2(1)$ converges for $s+2n-2k-1>0$ which is satisfied due to our assumptions $s>-1$, $k<n-1$.

Expanding the $\!_3 F_2$ into series and using the fact that
$$ \frac{(-\frac n2-s)_j}{(1+\frac n2+s-j)_r} = \frac{(-\frac n2-s-r)_j}{(1+\frac n2+s)_r} $$
for all~$r$, we~obtain
\begin{align}  \label{pom1}
I_{m,k}(s) &\sim m^{2k-s-1} \GG{\frac n2,n+s,n+s}{n-1,n-1,\frac n2+1+s} \\
& \nonumber \qquad \times \sum_{r=0}^\infty \frac{(1-\frac n2)_r^2 (s+1)_r}{(1+\frac n2+s)_r^2 r!} \FF21{-k,-\frac n2-s-r}{1-n-s}2 ^2.
\end{align}
We~now use the transform \cite[15.8.6]{NIST} to~get
\begin{align*}
\FF21{-k,-\frac n2-s-r}{1-n-s}2 &= (-2)^k \GG{n+s-k,1+\frac n2+s}{n+s,1+\frac n2+s-k}  \\
& \qquad \times \frac{(1+\frac n2+s)_r}{(1+\frac n2+s-k)_r} \FF21{-k,n+s-k}{1+\frac n2+s-k+r}{\frac12},
\end{align*}
and then use the Pfaff transform \cite[\S2.1~(22)]{BE} in the form
$$ \FF21{-k,n+s-k}{1+\frac n2+s-k+r}{\frac12} = 2^{-k} \FF21{-k,1-\frac n2+r}{1+\frac n2+s-k+r}{-1}.  $$
Inserting this into \eqref{pom1} we get the right-hand side of \eqref{leadingorder}.
\end{proof}

\begin{remark}
It must be pointed out that the series in (\ref{leadingorder}) converges for $2n+s>2k+1$, but it is guaranteed
to be positive only for $s>-1$. For $s\leq-1$ we cannot (in~general) be sure that the right hand side of
\eqref{leadingorder} is nonzero, which is essential for the asymptotic equality to hold.

Here are some more details. Denote the right hand side of (\ref{leadingorder}) by $L_k(s)$,
so that for $s>-1$ we have by Corollary~\ref{cor3}
$$ I_{m,k}\sim L_k(s) \qquad\text{as }m\to \infty. $$
The function $L_k(s)$ can also be expressed as follows:
$$ L_k(s) = \frac{\Gamma(\frac n2) \Gamma(1+\frac n2+s)}{m^{s+1-2k}} \frac{\Gamma(n+s-k)^2}{\Gamma(n-1)^2}
 \sum_{j=0}^\infty \frac{(1-\frac n2)_j^2 (s+1)_j}{j!} \bFF21{-k,1-\frac n2+j}{1+\frac n2+s-k+j}{-1} ^2, $$
where
$$ \bFF21{a,b}cx := \frac1{\Gamma(c)} \FF21{a,b}cx = \sum_{k=0}^\infty \frac{(a)_k (b)_k}{\Gamma(c+k)k!}x^k $$
is the regularized hypergeometric function, which is an entire function in all three parameters~$a,b,c$.
Also, as discussed before, the series converges for $2n+s>2k+1$.
Therefore
$$ L_k(s) = m^{2k-s-1} \Gamma(1+\frac n2+s)\Gamma(n+s-k)^2\times(\text{a holomorphic function in $\Re s>2k+1-2n$}). $$
The function $L_k(s)$ is therefore holomorphic whenever $\Re s>2k+1-2n$ and $s$ avoids the singularities
of $\Gamma(1+\frac n2+s)\Gamma(n+s-k)$. We have that $L_k(s)>0$ for $s>-1$ but, indeed, for other values of $s$
the function $L_k(s)$ need not be positive in general. In~fact, it is easy to see that
$$ L_0(-2)=0, $$
and hence for $s=-2$ it is not true that $I_{m,0}\sim L_0(-2)$ as $m\to \infty$. \qed \end{remark}

\begin{remark}
For $k=0$ we get the simpler formula
$$ L_0(s) =  \frac{\Gamma(\frac n2)\Gamma(1+\frac n2+s)}{m^{s+1}} \frac{\Gamma(n+s)^2}{\Gamma(n-1)^2}
 \bFF32{1-\frac n2,1-\frac n2,s+1}{1+\frac n2+s,1+\frac n2+s}1, $$
valid for $\Re s>1-2n$. This agrees with the leading factor of $I_m$ computed in \cite{Ur} for~$s>-1$.
\qed \end{remark}

\begin{proof}[Proof of Proposition~\ref{prop2}]
The proof of Proposition~\ref{prop2} now follows upon combining Corollary~\ref{cor1} with
the asymptotic expansions of $c_{j,k}(m)$, $c_{l,k}(m)$ (which are left to the reader, cf.~Remark~\ref{asyc}).
Corollary~\ref{cor3} shows that, indeed, $A_0>0$.
\end{proof}

\section{The \Hh Dirichlet space}  \label{sec4}
As~in~\cite{EY2}, introduce the notation
\[ I_m^\circ := \lim_{s\to-n} (s+n) I_m(s) = \begin{cases} {(n-1)_m}/{\Gamma(m)} \qquad&n\text{ even}, \\
 {2(n-1)_m}/{\Gamma(m)} &n\text{ odd}, \end{cases}   \label{YG}  \]
and consider the space
$$ \HH_\circ := \{f=\sum_m f_m: \; f_m\in\bhm\forall m, \; \|f\|_\circ^2:=\sum_m I_m^\circ \|f_m\|^2_\pbn<+\infty\}. $$
The~quantity $\|f\|_\circ$ vanishes on constants, is a norm on $\HH_{\circ,0}:=\{f\in\HH_\circ: f(0)=0\}$,
and a seminorm on~$\HH_\circ$; in~other words, $\|f\|_\circ+|f(0)|$ is a norm on~$\HH_\circ$.

Let $X_{jk}$, $j,k=1,\dots,n$, $j\neq k$, denote the tangential vector fields
$$ X_{jk} := x_j\partial_k - x_k\partial_j   $$
on~$\RR^n$, and denote by $\XX_j$, $j=1,\dots,n(n-1)$, the collection of all these operators
(in~some fixed order), i.e.
$$ \{X_{jk}: \; j,k=1,\dots,n,\;j\neq k\} = \{\XX_j: \; j=1,\dots,n(n-1) \}. $$
By~a~routine computation, one~checks that
$$ \sum_{j,k=1}^n X_{jk}^2 = 2\dsph,  $$
where $\dsph$ is the spherical Laplacian on~$\RR^n$: for $x=r\zeta$ with $r>0$ and $\zeta\in\pbn$,
the~ordinary Euclidean Laplacian is expressed~as
$$ \Delta = \frac{\partial^2}{\partial r^2} + \frac{n-1}r \frac\partial{\partial r} + \frac1{r^2}\dsph.  $$
The~operator $\dsph$ commutes with the action of the orthogonal group $O(n)$ of~$\RR^n$,
hence it is automatically diagonalized by the Peter-Weyl decomposition~\eqref{WA}:
a~simple  computation reveals that
\[ \dsph|\bhm = -m(m+n-2)I|\bhm  \label{WK}  \]
where $I$ stands for the identity operator.

In~addition to the tangential operators $X_{jk}$, we~denote by
$$ \cR := \sum_j x_j \partial_j  $$
the radial derivative operator on~$\RR^n$; note that in the polar coordinates $x=r\zeta$, $\cR=r\partial_r$.
Finally, we denote by $\YY_j$, $0\le j\le n(n-1)$, the collection of operators $\YY_j:=\XX_j$ for $j\ge1$ and $\YY_0:=\cR$:
$$ \{\cR\} \cup \{X_{jk}: \; j,k=1,\dots,n,\;j\neq k\} = \{\YY_j: \; j=0,\dots,n(n-1) \}, \qquad \YY_0=\cR. $$

\begin{theorem} If~$f=\sum_m f_m$, $f_m\in\bhm$, is \Hh on $\bn$, $n>2$, then the following assertions are equivalent:
\begin{itemize}
\item[(a)] $f\in\HH_\circ$;
\item[(b)] $\sum_m m^{n-1} \|f_m\|^2_\pbn <+\infty$;
\item[(c)] for some (equivalently, any) real $p>\frac{n-1}2$, $(-\dsph)^{p/2}f\in L^2(\bn,d\rho_{2p-n})$, i.e.
\[ \|(-\dsph)^{p/2}f\|^2_{2p-n} < +\infty ;  \label{YHa} \]
\item[(d)] for some (equivalently, any) integer $p>\frac{n-1}2$,
\[ \sum_{j_1,\dots,j_p=1}^{n(n-1)} \|\XX_{j_1}\dots\XX_{j_p}f\|^2_{2p-n} < +\infty .  \label{VM}  \]
\end{itemize}
If $n>3$, then all the above are additionally equivalent~to
\begin{itemize}
\item[(e)] for some (equivalently, any) integer $p$ satisfying $n-2\ge p>\frac{n-1}2$,
\[ \sum_{q=0}^p \sum_{j_1,\dots,j_q=0}^{n(n-1)} \|\YY_{j_1}\dots\YY_{j_q}f\|^2_{2p-n} < +\infty ;  \label{VP}  \]
\item[(f)] for some (equivalently, any) integer $p$ satisfying $n-2\ge p>\frac{n-1}2$,
\[ \sum_{|\alpha|\le p} \|\partial^\alpha f\|^2_{2p-n} < +\infty. \label{VO} \]
\end{itemize}
If $n$ is odd, then {\rm(a)--(d)} are additionally equivalent~to
\begin{itemize}
\item[(g)] for $p=\frac{n-1}2$,
\[ \sup_{0<r<1} \sum_{j_1,\dots,j_p=1}^{n(n-1)} \|\XX_{j_1}\dots\XX_{j_p}f(r\cdot)\|^2_\pbn < +\infty .  \label{VN} \]
\end{itemize}
Furthermore, the~square roots of the quantities in {\rm(b), (c), (d)} and {\rm(g)} are equivalent to~$\|f\|_\circ$,
and of those in {\rm(e), (f)} are equivalent to~$\|f\|_\circ+|f(0)|$.
\end{theorem}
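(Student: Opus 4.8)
The plan is to reduce each of the conditions (a)--(g) to a single benchmark quantity, namely $\sum_m m^{n-1}\|f_m\|^2_\pbn$ (augmented by $|f(0)|^2$ for (e) and (f)), exploiting that every operator occurring in the statement is block-diagonalized by the Peter--Weyl decomposition $\HH_s=\bigoplus_m\bhm$. The equivalence (a)$\Leftrightarrow$(b) is immediate from Stirling's formula: by~\eqref{YG} one has $I_m^\circ=c_n(n-1)_m/\Gamma(m)=c_n\Gamma(m+n-1)/(\Gamma(n-1)\Gamma(m))\asymp m^{n-1}$ for $m\ge1$, while $I_0^\circ=0$ matches the vanishing of $\|\cdot\|_\circ$ on constants; hence $\|f\|_\circ^2\asymp\sum_m m^{n-1}\|f_m\|^2_\pbn$.

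I would treat (c), (d) and (g) together, as these involve only operators that preserve each~$\bhm$. By~\eqref{WK}, $(-\dsph)^{p/2}$ acts on $\bhm$ as multiplication by $[m(m+n-2)]^{p/2}\asymp m^p$ and maps $\bhm$ into itself, so $\|(-\dsph)^{p/2}f\|^2_{2p-n}=\sum_m I_m(2p-n)[m(m+n-2)]^p\|f_m\|^2_\pbn$; since $p>\frac{n-1}2$ gives $2p-n>-1$, \eqref{YF} yields $I_m(2p-n)\asymp m^{n-2p-1}$, the product is $\asymp m^{n-1}$, and (c)$\Leftrightarrow$(b) for every admissible~$p$. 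For (d) the key facts are $\sum_{j=1}^{n(n-1)}\XX_j^2=2\dsph$, the skew-adjointness $\XX_j^*=-\XX_j$ on $L^2(\pbn)$, and the fact that each $\XX_j$ maps $\bhm$ into itself; peeling the $\XX_j$'s off one at a time gives $\sum_{j_1,\dots,j_p}\|\XX_{j_1}\cdots\XX_{j_p}f_m\|^2_\pbn=[2m(m+n-2)]^p\|f_m\|^2_\pbn$, and since the $\XX_j$ are tangential the full sum again factors over $m$ with weight $I_m(2p-n)$, giving (d)$\Leftrightarrow$(b). When $n$ is odd and $p=\frac{n-1}2$, the same computation carried out sphere by sphere gives, via~\eqref{YC}, $\sum_{j_1,\dots,j_p}\|\XX_{j_1}\cdots\XX_{j_p}f(r\cdot)\|^2_\pbn=\sum_m S_m(r^2)^2r^{2m}[2m(m+n-2)]^p\|f_m\|^2_\pbn$; since $2p=n-1$, (g)$\Leftrightarrow$(b) will follow from a uniform bound $\sup_m\sup_{0\le t\le1}S_m(t)^2t^m<\infty$ (for the direction ``$\lesssim$'') and from $S_m(t)^2t^m\to1$ as $t\to1^-$ together with Fatou's lemma (for ``$\gtrsim$'').

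The substantive work is (e) and (f), where $\cR$ and the plain partials no longer preserve~$\bhm$. The idea is that $\cR$ commutes with $O(n)$ and with every~$\XX_j$, so any word $\YY_{j_1}\cdots\YY_{j_q}f_m$ can be rearranged as $\cR^aWf_m$, where $a$ counts the factors equal to $\cR$ and $W$ is a word of length $q-a$ in the $\XX_j$'s; here $Wf_m\in\bhm$, and the substitution $t=|x|^2$ with the identity $(2t\partial_t)^a(t^{m/2}S_m)=t^{m/2}(m+2t\partial_t)^aS_m$ shows that $\|\cR^ag\|^2_{2p-n}=I_{m,a}(2p-n)\|g\|^2_\pbn$ for every $g\in\bhm$, cf.~\eqref{YMK}. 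Summing over all words of length $\le p$, using the (d)-identity for the $\XX$-part and Proposition~\ref{PZ} for the $\cR$-part --- applicable because $a\le q\le p\le n-2$ --- the coefficient of $\|f_m\|^2_\pbn$ becomes $\sum_{q=0}^p\sum_{a=0}^q\binom qa I_{m,a}(2p-n)[2m(m+n-2)]^{q-a}$, a sum of \emph{positive} terms each $\asymp m^{n-2p-1+2q}$, hence $\asymp m^{n-1}$ for $m\ge1$; the constant component is killed by every $\YY_j$, contributing only $\|f_0\|^2_{2p-n}\asymp|f(0)|^2$. As the $\YY_j$ commute with $O(n)$, distinct $O(n)$-isotypic subspaces of $L^2(\bn,d\rho_{2p-n})$ are orthogonal, so there is no interaction between different~$m$, and the quantity in (e) comes out $\asymp\|f\|^2_\circ+|f(0)|^2$. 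For (f), one direction is immediate because $\XX_j$ is a combination of the $\partial_k$ with bounded polynomial coefficients, whence (f)$\Rightarrow$(d); for the converse, the identity $|x|^2\partial_j=x_j\cR+\sum_kx_kX_{kj}$ lets one express, on $\{\delta\le|x|<1\}$, each $\partial^\alpha$ with $|\alpha|\le p$ through $\YY$-words of length $\le p$ with coefficients smooth there, while on $\{|x|<\delta\}$ one uses interior elliptic estimates for~$\dh$ together with the pointwise bound $|f(x)|\lesssim_\delta\|f\|_\circ+|f(0)|$ for $|x|\le\delta$, itself a consequence of $|f_m(x)|\le S_m(|x|^2)|x|^m\sqrt{\dim\chm}\,\|f_m\|_\pbn$, elementary estimates on $S_m$, and Cauchy--Schwarz. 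This gives (f)$\Leftrightarrow$(e), and all the asserted norm equivalences read off from the displays above.

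I expect the main obstacles to be two. First, the uniform-in-$m$ bound $S_m(t)^2t^m\le C$ on $[0,1]$, needed for (g) and for the pointwise control of~$f$: this should follow either from the hyperbolic Poisson-kernel representation of the solid harmonics or from a direct examination of the connection formula~\eqref{Erdelyiformula}. Second, the careful bookkeeping of the constant term --- why the extra $|f(0)|$ enters (e) and (f) but not (b)--(d) or~(g) --- which in the end is because $\cR$ and the partials detect the value of~$f$ at the origin, whereas $(-\dsph)^{p/2}$ and purely tangential words annihilate the constants.
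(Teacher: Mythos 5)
Your proposal is correct and follows essentially the same route as the paper's proof: Peter--Weyl diagonalization reducing every condition to the weights $I_m(2p-n)$ and $I_{m,k}(2p-n)$ via the identity $\sum_{j,k}X_{jk}^2=2\dsph$ and the asymptotics \eqref{YF} and Proposition~\ref{PZ}, with the $m=0$ block accounting for the extra $|f(0)|$ in (e) and~(f). The only real divergences are local and harmless: for (g) you use a uniform bound on $S_m(t)^2t^m$ plus Fatou where the paper uses the monotone increase of $r^mS_m(r^2)$ to~$1$ with monotone convergence, and for the near-origin part of (e)$\Rightarrow$(f) you invoke interior elliptic estimates for $\dh$ and pointwise bounds built from $S_m$-asymptotics where the paper bounds $|\partial^\alpha f(a)|\lesssim\|f\|_s$ directly via the invariant mean-value property --- both work.
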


\begin{proof} (a)$\iff$(b) Clearly from~\eqref{YG}
\[ I^\circ_m \asymp m^{n-1},  \label{YH}  \]
and the claim is thus immediate from the definition of~$\|\cdot\|_\circ$.

\medskip

(b)$\iff$(d) Since the adjoint of $X_{jk}$ in $L^2(\pbn,d\sigma)$ is just~$-X_{jk}$,
we~have for any $g\in L^2(\pbn,d\sigma)$
$$ \sum_{j=1}^{n(n-1)} \|\XX_j g\|^2_\pbn = -\sum_{j,k=1}^n \spr{X_{jk}^2g,g}_\pbn = -2\spr{\dsph g,g}_\pbn, $$
so~for $g=\sum_m g_m$, $g_m\in\chm$, as~in~\eqref{WA},
\[ \sum_{j=1}^{n(n-1)} \|\XX_j g\|^2_\pbn = \sum_m 2m(m+n-2) \|g_m\|^2_\pbn ,  \label{WL} \]
by~\eqref{WK}. Iterating this procedure, we~get
$$ \sum_{j_1,\dots,j_p=1}^{n(n-1)} \|\XX_{j_1}\dots\XX_{j_p}g\|^2_\pbn = \sum_m [2m(m+n-2)]^p \|g_m\|^2_\pbn . $$
Applying this now to $g(\zeta)=f(r\zeta)$ where $f$ is \Hh on~$\bn$, we~obtain by~\eqref{YC}
\[ \sum_{j_1,\dots,j_p=1}^{n(n-1)} \|\XX_{j_1}\dots\XX_{j_p}f(r\cdot)\|^2_\pbn = \sum_m [2m(m+n-2)]^p r^{2m} S_m(r^2)^2 \|f_m\|^2_\pbn , \label{YI} \]
and, for any $s>-1$,
\begin{align*}
& \sum_{j_1,\dots,j_p=1}^{n(n-1)} \|\XX_{j_1}\dots\XX_{j_p}f\|^2_s \\
& \hskip4em = \frac{\Gamma(\frac n2+s+1)}{\pi^{n/2}\Gamma(s+1)} \int_0^1 \frac{2\pi^{n/2}}{\Gamma(\frac n2)}
 \sum_{j_1,\dots,j_p=1}^{n(n-1)} \|\XX_{j_1}\dots\XX_{j_p}f(r\cdot)\|^2_\pbn \,(1-r^2)^s r^{n-1}\,dr \\
& \hskip4em = \frac{\Gamma(\frac n2+s+1)}{\Gamma(\frac n2)\Gamma(s+1)}
 \int_0^1 \sum_m [2m(m+n-2)]^p \|f_m\|^2_\pbn t^{m+\frac n2-1}S_m(t)^2(1-t)^s \,dt \\
& \hskip4em = \sum_m [2m(m+n-2)]^p I_m(s) \|f_m\|^2_\pbn \qquad\text{by \eqref{YE}}.
\end{align*}
Recalling~\eqref{YF}, we~see that if $s=2p-n$, then for all $m\ge1$
\[ [2m(m+n-2)]^p I_m(s) \asymp m^{2p-s-1} = m^{n-1} , \label{YK} \]
while for $m=0$ both sides vanish. This proves the claim.

\medskip

(b)$\iff$(c) First of all, from the expression of $d\rho_s$ in polar coordinates
$$ d\rho_s(x) = \frac{\Gamma(\frac n2+s+1)}{\Gamma(\frac n2)\Gamma(s+1)} r^{n-1}(1-r^2)^s
 \frac{2\pi^{n/2}}{\Gamma(\frac n2)} \, d\sigma(\zeta) \,dr, \qquad x=r\zeta, \;r>0, \zeta\in\pbn,  $$
we~get the expression as Hilbert space tensor product
$$ L^2(\bn,d\rho_s) = L^2((0,1),\tfrac{\Gamma(\frac n2+s+1)}{\Gamma(\frac n2)\Gamma(s+1)} r^{n-1}(1-r^2)^s\,dr)
 \otimes L^2(\pbn, \tfrac{2\pi^{n/2}}{\Gamma(\frac n2)} \, d\sigma(\zeta)) .   $$
By~\eqref{WA} and~\eqref{WK}, it~follows that $\dsph$ then acts~as
$$ I \otimes \bigoplus_m [-m(m+n-2)]I|\chm ,  $$
and, hence, $-\dsph$ is indeed a positive selfadjoint operator on each $L^2(\bn,d\rho_s)$, $s>-1$,
and the power $(-\dsph)^q$ makes sense for any real $q\ge0$ by the spectral theorem:
$$ (-\dsph)^q \sum_m F_m(r) f_m(\zeta) = \sum_m [m(m+n-2)]^q F_m(r) f_m(\zeta)  $$
for any $f_m\in\chm$ and $F_m\in L^2((0,1),r^{n-1}(1-r^2)^s\,dr)$.

Using again \eqref{YC}, the~claim thus follows by the same calculation as from~\eqref{YI} above.

\medskip

(b)$\iff$(g) Observe from \eqref{YI}, combined with the fact that $r^m S_m(r^2)\nearrow1$ as $r\nearrow1$,
that the supremum on the left-hand side of \eqref{VN} equals
$$ \sum_m [2m(m+n-2)]^p \|f_m\|^2_\pbn $$
by the Lebesgue Monotone Convergence Theorem. Since now $2p=n-1$, the~claim thus follows in the same way
as in~\eqref{YK}.

\medskip

(b)$\iff$(e) Observe first of all that the additional hypothesis $n>3$ ensures that
there exists at least one integer $p$ in the interval $n-2\ge p>\frac{n-1}2$.
Let
$$ f(r\zeta) = \sum_m r^m S_m(r^2) f_m(\zeta), \qquad 0\le r<1, \zeta\in\pbn,   $$
be the Peter-Weyl decomposition of our \Hh function~$f$. Then
$$ \XX_{j_1}\dots\XX_{j_l} \cR^k f (r\zeta) = \sum_m \cR^k[r^m S_m(r^2)] \XX_{j_1}\dots\XX_{j_l}f_m(\zeta) $$
(which also remains in force for any permutation of the order of the $\XX_j$ and $\cR$ on the left-hand side).
Hence, similarly as for~\eqref{YI},
$$ \sum_{j_1,\dots,j_q=1}^{n(n-1)} \|\XX_{j_1}\dots\XX_{j_q}\cR^k f(r\cdot)\|^2_\pbn
 = \sum_m [2m(m+n-2)]^q (\cR^k[r^m S_m(r^2)])^2 \|f_m\|^2_\pbn , $$
and, for any $s>-1$,
\begin{align*}
& \sum_{j_1,\dots,j_q=1}^{n(n-1)} \|\XX_{j_1}\dots\XX_{j_q}\cR^k f\|^2_s \\
& \hskip4em = \frac{2\Gamma(\frac n2+s+1)}{\Gamma(\frac n2)\Gamma(s+1)}
 \sum_m [2m(m+n-2)]^q \|f_m\|^2_\pbn \int_0^1 (\cR^k[r^m S_m(r^2)])^2 r^{n-1} (1-r^2)^s \,dr \\
& \hskip4em = \frac{\Gamma(\frac n2+s+1)}{\Gamma(\frac n2)\Gamma(s+1)}
 \sum_m [2m(m+n-2)]^q \|f_m\|^2_\pbn \int_0^1 ((2t\partial_t)^k[t^{m/2} S_m(t)])^2 t^{\frac n2-1} (1-t)^s \,dt \\
& \hskip4em = \sum_m [2m(m+n-2)]^q I_{m,k}(s) \|f_m\|^2_\pbn
\end{align*}
by~\eqref{YMK}. Using~\eqref{YMI}, we~again see that if $q+k\le p<n-1$ and $s=2p-n$, then for all $m\ge1$
\[ [2m(m+n-2)]^q I_{m,k}(s) \asymp m^{2q-s-1+2k} \le m^{2p-s-1} = m^{n-1} \asymp I^\circ_m; \label{YMS} \]
and if $q+k=p<n-1$ and $s=2p-n$, then for all $m\ge1$ even
\[ [2m(m+n-2)]^q I_{m,k}(s) \asymp m^{2q-s-1+2k} = m^{2p-s-1} = m^{n-1} \asymp I^\circ_m. \label{YMT} \]
For $m=0$, \eqref{YMT}~still holds, and so does \eqref{YMS} if $0<q+k$;
for $m=q=k=0$, the~right-hand side of \eqref{YMS} becomes 1 while the left-hand side is zero.
Consequently, for $q<p<n-1$,
$$ \sum_{j_1,\dots,j_q=0}^{n(n-1)} \|\YY_{j_1}\dots\YY_{j_q}f\|^2_{2p-n} \lesssim \|f\|_\circ^2 + \delta_{q0}|f(0)|^2,  $$
while for $q=p<n-1$ even
$$ \sum_{j_1,\dots,j_q=0}^{n(n-1)} \|\YY_{j_1}\dots\YY_{j_q}f\|^2_{2p-n} \asymp \|f\|_\circ^2.  $$
The claim follows.

\medskip

(f)$\implies$(e) By~the Leibnitz rule, $\XX_{j_1}\dots\XX_{j_p}f(x)=\sum_{|\alpha|\le p} P_\alpha(x) \partial^\alpha f(x)$,
with some coefficient functions $P_\alpha$ that are bounded on $\bn$ (in~fact --- they are polynomials).
The~claim is thus immediate from the triangle inequality.

\medskip

(e)$\implies$(f) Observe that at any $x\in\bn$, $x\neq0$, the tangential vector fields $\XX_j$ span
(very redundantly) the entire tangent space to the sphere~$|x|\pbn$; thus together with~$\cR$,
they span the whole tangent space at~$x$. It~follows that, for any $q\in\NN$, the~derivatives
$\partial^\alpha f$, $|\alpha|=q$, can be expressed as linear combinations of the derivatives
$\YY_{j_1}\dots\YY_{j_q}f$, $1\le j_1,\dots,j_q\le n(n-1)$; furthermore, the~coefficients of
these linear combinations can be chosen to be bounded away from the origin $x=0$ (where all the
$\XX_j$ as well as $\cR$ vanish). In~other words, if~we momentarily denote by $\chi$ the characteristic
function of the annular region $\frac14<|x|<1$, then for any $s>-1$ and~$q\in\NN$,
$$ \sum_{|\alpha|=q} \|\chi\partial^\alpha f\|_s^2 \lesssim \sum_{j_1,\dots,j_q=1}^{n(n-1)} \|\YY_{j_1}\dots\YY_{j_q}f\|^2_s , $$
and, hence, for any $p\in\NN$,
\[ \sum_{q=0}^p \sum_{|\alpha|=q} \|\chi\partial^\alpha f\|_s^2 \lesssim
 \sum_{q=0}^p \sum_{j_1,\dots,j_q=1}^{n(n-1)} \|\YY_{j_1}\dots\YY_{j_q}f\|^2_s . \label{YNA} \]
To~treat the remaining region $|x|<1/4$, we~use a ``subharmonicity'' argument. Recall that \Hh
functions possess the mean-value property~\cite[Corollary~4.1.3]{St}
$$ f(a) = \int_\pbn f(\phi_a(r\zeta)) \,d\sigma(\zeta)   $$
for any $r\in(0,1)$, $a\in\bn$ and $f$ \Hh on~$\bn$. Integrating over $0<r<\frac14$ yields
$$ f(a) = c_n \int_{|x|<1/4} f(\phi_a(x)) \,dx ,  $$
where $c_n:=1/\int_{|x|<1/4}\,dx$. Changing the variable $x$ to~$\phi_a(x)$ gives
$$ f(a) = \int_{|\phi_a(x)|<1/4} f(x) F(a,x) \, d\rho_s(x),  $$
with
$$ F(a,x):=c_n \frac{\pi^{n/2}\Gamma(s+1)}{\Gamma(\frac n2+s+1)} \operatorname{Jac}_{\phi_a}(x) (1-|x|^2)^{-s} $$
smooth on~$\bn\times\bn$. Hence for any multiindex~$\alpha$,
$$ \partial^\alpha f(a) = \int_{|\phi_a(x)|<1/4} f(x) \;\partial^\alpha_a F(a,x) \, d\rho_s(x).  $$
If~$|a|<\frac14$, one easily checks from \eqref{YNC} below that $|\phi_a(x)|<\frac14$ implies
$$ 1-|x|^2 = 1-|\phi_a(\phi_a(x))|^2 = \frac{(1-|a|^2)(1-|\phi_a(x)|^2)}{1-2\spr{a,\phi_a(x)}+|a|^2|\phi_a(x)|^2}
 \ge \frac{(1-4^{-2})^2}{(1+4^{-2})^2} = \frac{15^2}{17^2} ,  $$
or $|x|<8/17$. Since $\partial^\alpha_a F(a,x)$~is, thanks to the smoothness of~$F$ on~$\bn\times\bn$,
bounded on $|a|<\frac14$ and $|x|<\frac8{17}$, we~thus get
$$ |\partial^\alpha f(a)| \le C_\alpha \int_{|\phi_a(x)|<1/4} |f(x)| \, d\rho_s(x) \le C_\alpha \|f\|_s $$
with some finite $C_\alpha$ independent of $|a|<1/4$ and~$f$. Hence
$$ \|(1-\chi)\partial^\alpha f\|^2_s \le C_\alpha^2 \|1-\chi\|^2_s \|f\|^2_s,  $$
and, consequently,
$$ \sum_{q=0}^p \sum_{|\alpha|=q} \|(1-\chi)\partial^\alpha f\|_s^2 \lesssim \|f\|^2_s .  $$
Combining this with \eqref{YNA} and setting $s=2p-n$, the claim follows.

\medskip

This completes the proof of the theorem.  \end{proof}

Part (b) of the last theorem can be reformulated as follows. Consider the weak-maximal operator $X$ acting
from $L^2(\pbn)$ into the Cartesian product of $[n(n-1)]^p$ copies of $L^2(\pbn)$ by
$$ g\longmapsto \{\XX_{j_1}\dots\XX_{j_p}g\}_{j_1,\dots,j_p=1}^{n(n-1)} ; $$
that~is, the~domain of $X$ consists of all $g\in L^2(\pbn)$ for which all the $\XX_{j_1}\dots\XX_{j_p}g$
exist in the sense of distributions and belong to $L^2(\pbn)$.
(In~other words, $X=Y^*$ where $Y$ is the restriction of the formal adjoint $X^\dagger$
of $X$ to $\bigoplus^{[n(n-1)]^p} C^\infty(\pbn)$.)
Then~$f\in\HH$ belongs to $\HH_\circ$ if and only if $f(r\cdot)\to f^*$ as $r\nearrow1$ in $L^2(\pbn)$
for some ``boundary value'' $f^*$ of~$f$, and $f^*\in\operatorname{Dom}(X)$.
Furthermore, $\|Xf^*\|$ is a seminorm equivalent to~$\|f\|_\circ$.
(The~reader is referred to Grellier and Jaming \cite[Theorem~A]{GJ} for much more detailed discussion
of the matters above and boundary values of~\Hh functions in general.)

We~use this reformulation in the proof of the next proposition for $n=3$,
writing for ease of notation just $f$ instead of~$f^*$.

\section{Moebius invariance} \label{sec5}
\begin{proposition} The space $\HH_\circ$ is Moebius invariant:
$f\in\HH_\circ$ implies $f\circ U\in\HH_\circ$, $f\circ\phi_a\in\HH_\circ$ for any $U\in\On$ and $a\in\bn$.
Also, the composition operators $f\mapsto f\circ U$, $f\mapsto f\circ\phi_a$
are continuous on~$\HH_\circ$.
\end{proposition}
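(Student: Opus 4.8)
The plan is to read off membership in $\HH_\circ$ from the equivalent descriptions furnished by the theorem just proved, choosing for each dimension the one that behaves well under the Moebius maps. The orthogonal transformations give no trouble: writing $f\circ U=\sum_m(f_m\circ U)$ with $f_m\circ U\in\bhm$ (both $\dh$ and the Peter--Weyl spaces $\chm$ being $\On$-invariant) and $\|f_m\circ U\|_\pbn=\|f_m\|_\pbn$ ($d\sigma$ being rotation invariant) gives $\|f\circ U\|_\circ=\|f\|_\circ$ outright, so everything reduces to the maps $\phi_a$, $a\in\bn$. Since $\dh$ is Moebius invariant, $f\circ\phi_a$ is automatically $H$-harmonic whenever $f$ is, so the only issue is membership in $\HH_\circ$ together with a norm bound, and here I would argue differently according as $n>3$ or $n=3$, since the Euclidean-derivative characterizations (e), (f) are available only for $n>3$.

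For $n>3$ I would use the equivalence (a)$\iff$(f). Fix an integer $p$ with $n-2\ge p>\frac{n-1}2$ (such $p$ exists exactly because $n>3$) and put $s:=2p-n$, so $s>-1$ and $s+n=2p>0$. By the chain rule $\partial^\alpha(f\circ\phi_a)=\sum_{|\beta|\le|\alpha|}c_{\alpha\beta}\cdot(\partial^\beta f)\circ\phi_a$, where the $c_{\alpha\beta}$ are rational in $x$ with denominators a power of $[x,a]$, hence smooth and bounded on $\bn$ for fixed $a$; and using $1-|\phi_a(x)|^2=(1-|a|^2)(1-|x|^2)/[x,a]^2$ together with $\operatorname{Jac}_{\phi_a}(x)=((1-|a|^2)/[x,a]^2)^n$ (the latter being the $\phi_a$-invariance of $d\tau=(1-|x|^2)^{-n}dx$) and $\phi_a\circ\phi_a=\operatorname{id}$, the substitution $x\mapsto\phi_a(x)$ gives for every $h\ge0$
$$ \int_\bn h(\phi_a(x))\,d\rho_s(x)=(1-|a|^2)^{s+n}\int_\bn h(x)\,[x,a]^{-2s-2n}\,d\rho_s(x)\le C_a\int_\bn h\,d\rho_s, $$
since $[x,a]^2=1-2\spr{x,a}+|x|^2|a|^2$ stays between $(1-|a|)^2$ and $(1+|a|)^2$ on $\bn$. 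Combining the two, via the Cauchy--Schwarz inequality, yields $\sum_{|\alpha|\le p}\|\partial^\alpha(f\circ\phi_a)\|^2_{2p-n}\lesssim_a\sum_{|\beta|\le p}\|\partial^\beta f\|^2_{2p-n}$. By (a)$\iff$(f) the right-hand side is finite (as $f\in\HH_\circ$), so the left-hand side, being finite, forces $f\circ\phi_a\in\HH_\circ$; and since the square roots of these two quantities are equivalent to $\|\cdot\|_\circ+|\cdot(0)|$ and $(f\circ\phi_a)(0)=f(a)$, this also gives $\|f\circ\phi_a\|_\circ+|f(a)|\lesssim_a\|f\|_\circ+|f(0)|$, i.e.\ continuity of the composition operator.

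For $n=3$, where $\pbn=S^2$ is two--dimensional and (e), (f) are unavailable, I would instead use the boundary-value reformulation of condition (b) recorded just after the theorem, with $p=\frac{n-1}2=1$: a function $u$ that is $H$-harmonic on $\bn$ lies in $\HH_\circ$ iff $u(r\cdot)\to u^*$ in $L^2(\pbn)$ for some $u^*$ with $\XX_j u^*\in L^2(\pbn)$ for all $j$, and then, by \eqref{WL}, $\|u\|_\circ^2\asymp\sum_j\|\XX_j u^*\|^2_\pbn=-2\spr{\dsph u^*,u^*}_\pbn=2\int_\pbn|\nabla_\pbn u^*|^2\,d\sigma$. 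The decisive observation is that $\phi_a$, lying in the Moebius group of $\bn$, restricts to a conformal diffeomorphism $\psi:=\phi_a|_\pbn$ of the $2$-sphere, and that the Dirichlet integral $\int_{S^2}|\nabla u|^2\,d\sigma$ is invariant under conformal self-maps of $S^2$; hence $\int_\pbn|\nabla_\pbn(f^*\circ\psi)|^2\,d\sigma=\int_\pbn|\nabla_\pbn f^*|^2\,d\sigma$, while also $\|f^*\circ\psi\|_{L^2(\pbn)}\asymp\|f^*\|_{L^2(\pbn)}$ because $\psi$ is a diffeomorphism of the compact $S^2$. It then remains to identify the $L^2(\pbn)$-boundary value of $g:=f\circ\phi_a$ with $f^*\circ\psi$: since $f$ is the $H$-harmonic Poisson extension of $f^*\in L^2(\pbn)$, Cauchy--Schwarz gives $|f|^2\le H$, where $H$ is the $H$-harmonic extension of $|f^*|^2$, so $|g(r\zeta)|^2\le H(\phi_a(r\zeta))$ with $\int_\pbn H(\phi_a(r\zeta))\,d\sigma(\zeta)=H(a)$ for all $r$ by the invariant mean value property \cite[Corollary~4.1.3]{St}; and $g(r\zeta)\to(f^*\circ\psi)(\zeta)$ for a.e.\ $\zeta$, because $f$ has nontangential limit $f^*(\eta)$ at a.e.\ $\eta$ and $r\mapsto\phi_a(r\zeta)$ tends to $\phi_a(\zeta)$ nontangentially ($\phi_a$ being a diffeomorphism of $\overline\bn$ carrying the radius to an arc transversal to $\pbn$); a routine uniform-integrability argument then yields $g(r\cdot)\to f^*\circ\psi$ in $L^2(\pbn)$, cf.\ \cite{St}, \cite{GJ}. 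Feeding this into the reformulation gives $g\in\HH_\circ$ with $\|f\circ\phi_a\|_\circ^2\asymp\|f\|_\circ^2$, and $|(f\circ\phi_a)(0)|=|f(a)|\lesssim_a\|f\|_\circ+|f(0)|$ completes the continuity statement.

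The step I expect to be the main obstacle is the case $n=3$: no single characterization in the theorem serves uniformly in $n$ --- for $n>3$ the honest Euclidean-derivative description (f) transforms cleanly under $\phi_a$, but for $n=3$ there is no admissible exponent $p$ --- so one is forced onto the boundary sphere, and there the argument leans on the classical conformal invariance of the two--dimensional Dirichlet integral together with the slightly delicate, if standard, identification of the boundary value of $f\circ\phi_a$ with $f^*\circ(\phi_a|_\pbn)$.
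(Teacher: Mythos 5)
Your proposal is correct, and for $U\in\On$ and for the case $n>3$ it is essentially the paper's own argument: the paper likewise notes that $f\mapsto f\circ U$ is unitary by the Peter--Weyl decomposition, and for $n>3$ reduces to characterization (f), applies the chain rule to get $\partial^\alpha(f\circ\phi_a)=\sum_{|\beta|\le|\alpha|}c_{\alpha\beta}\cdot(\partial^\beta f)\circ\phi_a$ with bounded coefficients, and changes variables using the boundedness of $\operatorname{Jac}_{\phi_a}$ (you make the Jacobian and the bound on $[x,a]$ explicit, which the paper leaves implicit). Where you genuinely diverge is $n=3$. The paper stays with characterization (g) in its boundary-value reformulation and proves the soft estimate $\|\nbt(g\circ\phi_a)\|^2_\pbn\le C_a\|\nbt g\|^2_\pbn$ directly: the derived map $d\phi_a$ preserves the tangent bundle of $\pbn$ and varies smoothly over the compact sphere, so its operator norm is bounded; this gives continuity of the composition operator with no reference to conformal geometry. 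You instead invoke the exact conformal invariance of the two-dimensional Dirichlet integral on $S^2$, which buys a stronger conclusion (equality, not just a bound, of the boundary Dirichlet energies) at the cost of having to justify carefully that the $L^2(\pbn)$ boundary value of $f\circ\phi_a$ is $f^*\circ(\phi_a|_\pbn)$ --- a point the paper sweeps into its convention of ``writing $f$ for $f^*$'' and which you handle via the majorization $|f|^2\le H$, the invariant mean-value property, and a uniform-integrability argument. Both routes are sound; the paper's is shorter and needs no two-dimensional miracle (indeed it would work verbatim for any odd $n$ with $p=\frac{n-1}2$), while yours explains \emph{why} the $n=3$ case is so clean and makes the boundary-value identification explicit rather than tacit.
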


\begin{proof} For $\On$-invariance, both assertions are immediate from \eqref{WA}
and the definition of inner product in~$\HH_\circ$ --- in~fact, the composition operator
$f\mapsto f\circ U$, $U\in\On$, is~unitary.

So~consider the composition with~$\phi_a$. Assume first that $n>3$.
By~part (f) of the last theorem, it~is then enough to show that
\[ \sum_{|\alpha|\le p} \|\partial^\alpha (f\circ\phi_a)\|^2_{2p-n}
 \lesssim \sum_{|\beta|\le p} \|\partial^\beta f\|^2_{2p-n}  \label{YNB}  \]
for some integer $p$ satisfying $n-2\ge p>\frac{n-1}2$ (thanks to the assumption that $n>3$,
such an integer exists). However, by~the chain rule, we~have
$$ \partial^\alpha(f\circ\phi_a)(x) = \sum_{|\beta|\le|\alpha|} c_{\alpha\beta}(x) (\partial^\beta f)(\phi_a(x)),  $$
where the coefficient functions $c_{\alpha\beta}$ are polynomials in the derivatives of~$\phi_a$;
in~particular, they are bounded on~$\bn$. Furthermore, the Jacobian $\operatorname{Jac}_{\phi_a}(x)$
is likewise bounded on~$\bn$ for any fixed~$a\in\bn$. Consequently, for any $s>-1$,
$$ \|\partial^\alpha(f\circ\phi_a)\|^2_s \lesssim \sum_{|\beta|\le|\alpha|} \|\partial^\beta f\|^2_s .  $$
Summing over all $|\alpha|\le p$ and setting $s=2p-n$, \eqref{YNB}~follows.

It~remains to deal with the case $n=3$. Here we can use part (g) of the last theorem: that~is,
we~need to show that
$$ \sum_{j_1,\dots,j_p=1}^{n(n-1)} \|\XX_{j_1}\dots\XX_{j_p}(f\circ\phi_a)\|^2_\pbn  $$
is bounded by a constant multiple of the same sum for~$f$ in the place of $f\circ\phi_a$.
Observe that the tangential vector-fields $\XX_j$, $j=1,\dots,n(n-1)$, span (very redundantly)
the~entire tangent space to~$\pbn$. Thus for any differentiable function $g$ on~$\pbn$ and any $x\in\pbn$,
$\sum_j\|\XX_j g(x)\|^2\asymp\|\nbt g(x)\|^2$, the~norm-square of the restriction $\nbt g(x)$ of
the tensor $\nabla g(x)$ at $x$ to the tangent space of~$\pbn$ in the sense of Riemannian geometry.
Now~for any vector field $X$ on~$\pbn$, one~has $X(g\circ\phi_a)=d\phi_a(X)g$.
Since $\phi_a$ maps the sphere $\pbn$ onto itself, the~derived map $d\phi_a$
maps the tangent space of $\pbn$ into itself. Finally, $d\phi_a|\pbn$ is a smoothly varying
map on the compact manifold~$\pbn$ (hence, in~particular, so~is its Jacobian).
Consequently,
$$ \|\nbt(g\circ\phi_a)\|^2_\pbn = \|d\phi_a(\nbt) g\|^2_\pbn \le C_a \|\nbt g\|^2_\pbn  $$
with some finite $C_a$ (independent of~$g$). Iterating this argument, it~transpires that
$$ \|\nbt^p (g\circ\phi)\|^2_\pbn \le C_a^p \|\nbt g\|^2_\pbn.  $$
Passing from $\nbt$ back to the~$\XX_j$, the~last inequality reads
$$ \sum_{j_1,j_2,\dots,j_p=1}^{n(n-1)} \|\XX_{j_1}\XX_{j_2}\dots\XX_{j_p}(g\circ\phi_a)\|^2_\pbn
 \le C_a^p \sum_{j_1,j_2,\dots,j_p=1}^{n(n-1)} \|\XX_{j_1}\XX_{j_2}\dots\XX_{j_p}g\|^2_\pbn,  $$
which is what we needed to prove.  \end{proof}


Introduce the space
\begin{multline*}
 \HH' := \{f\text{ \Hh on }\bn: (s+n)\|f\|_s^2 \text{ has an analytic continuation}\\
 \text{ in $s$ to a neighborhood of }s=-n\} .
\end{multline*}
By~Proposition~\ref{PA}, $\HH'$~contains the algebraic span of~$\bhm$, $m\in\NN$, and the square root
$\|f\|'$ of the value of the analytic continuation above at $s=-n$ is a (semi-) norm on~$\HH'$,
with the corresponding (semi-)inner product
$$ \spr{f,g}' = \text{the analytic continuation to $s=-n$ of } (s+n)\spr{f,g}_s    $$
coinciding with $\spr{f,g}_\circ$ on~this span. It~follows that $\HH^\circ$ is just the completion
of $\HH'$ with respect to this inner product.

The~following result shows that it may be appropriate to view $\HH_\circ$ as the ``\Hh Dirichlet space'',
and gives an answer to a question on p.~180 in~\cite{St}.

\begin{theorem} \label{PF}
The inner product in $\HH_\circ$ is Moebius invariant:
$$ \spr{f\circ\phi,g\circ\phi}_\circ = \spr{f,g}_\circ  $$
for any $f,g\in\HH_\circ$ and $\phi$ in the Moebius group of~$\bn$.
\end{theorem}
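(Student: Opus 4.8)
The plan is to reduce Moebius invariance of $\spr{\cdot,\cdot}_\circ$ to invariance of the weighted norms $\|\cdot\|_s$ for $s$ near $-n$, where we already have meromorphy in $s$. Concretely, first recall that for $s>-1$ the measure $d\rho_s$ is \emph{not} Moebius invariant, but it transforms in a controlled way: a direct computation of the Jacobian of $\phi_a$ gives $d\rho_s(\phi_a(x)) = \operatorname{Jac}_{\phi_a}(x)\,(1-|x|^2)^{-s}\cdot(\text{const})\,d\rho_s(x)$, and more usefully, writing things in terms of the invariant measure $d\tau(x)=(1-|x|^2)^{-n}\,dx$, one has $(1-|\phi_a(x)|^2)^s = (1-|a|^2)^s [a,x]^{-2s}(1-|x|^2)^s$ (using the standard identity $1-|\phi_a(x)|^2 = \frac{(1-|a|^2)(1-|x|^2)}{[a,x]^2}$ implicit in the excerpt's formula \eqref{YNC}). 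Hence for $f$ \Hh and $s>-1$,
\[ \|f\circ\phi_a\|_s^2 = \int_\bn |f(y)|^2 (1-|a|^2)^{s+n}[a,\phi_a(y)]^{-2(s+n)}\,d\rho_s(y), \]
after the change of variables $y=\phi_a(x)$; the point is that the weight picked up is a smooth function of $y$ (for fixed $a$) that depends on $s$ holomorphically and is uniformly bounded on $\bn$.

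Next, the key structural input is Proposition~\ref{PA}: the coefficients $I_m(s)$, hence (by \eqref{TJ}) the quadratic forms $\|f\|_s^2$ on the algebraic span $\HH'$ of the $\bhm$, extend meromorphically in $s$ to a neighborhood of $s=-n$ with at most a simple pole there, and $(s+n)\|f\|_s^2\to\|f\|_\circ^2$. The plan is to show that for fixed $f$ in this span and fixed $a\in\bn$, the function $s\mapsto \|f\circ\phi_a\|_s^2$ likewise continues meromorphically near $s=-n$ with at most a simple pole, and that $(s+n)\|f\circ\phi_a\|_s^2 \to \|f\circ\phi_a\|_\circ^2$ as $s\to-n$ —- i.e.\ that $f\circ\phi_a\in\HH'$ with $\|f\circ\phi_a\|' = \|f\circ\phi_a\|_\circ$. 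Granting this, the identity $\|f\circ\phi_a\|_s^2 = \|f\|_s^2$ for a \emph{Moebius-invariant} measure would be immediate; but $d\rho_s$ is not invariant, so instead we argue as follows. For $s>-1$ both $\|f\circ\phi_a\|_s^2$ and the transformed integral above are genuine convergent integrals and agree. The right-hand side is an integral of $|f(y)|^2$ against a measure $w_{s,a}(y)\,d\rho_s(y)$ with $w_{s,a}$ bounded and jointly nice; expanding $f=\sum_m f_m$ and using that multiplication by the smooth weight $w_{s,a}$ mixes the Peter–Weyl components only through rapidly decaying (in the degree difference) coefficients, one gets an expression whose continuation to $s=-n$ can be computed termwise. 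The cleanest route: prove that $\|f\circ\phi_a\|_s^2 - \|f\|_s^2$ extends \emph{holomorphically} (no pole) to a neighborhood of $s=-n$; then $(s+n)(\|f\circ\phi_a\|_s^2-\|f\|_s^2)\to 0$, giving $\|f\circ\phi_a\|_\circ^2 = \|f\|_\circ^2$ on $\HH'$, and the general case follows by density of $\HH'$ in $\HH_\circ$ together with the already-established continuity of the composition operators (previous Proposition). Polarization then yields the inner-product identity, and the case of a general Moebius map follows since every such map is $U\phi_a$ and the $\On$-part is already handled (unitarily).

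The residue cancellation —- that the \emph{difference} $\|f\circ\phi_a\|_s^2 - \|f\|_s^2$ is pole-free at $s=-n$ —- is the crux, and it is where the paper's machinery (Propositions~\ref{PA}, \ref{PZ}, \ref{prop2} and the derivative characterizations in Section~\ref{sec4}) should be brought to bear. Rather than computing the residue of the transformed integral by hand, I would exploit part (f) (for $n>3$), part (g) (for $n=3$), and part (c)/(d) of the Theorem: these identify $\|f\|_\circ$, up to equivalence of norms and up to the $|f(0)|$ ambiguity, with sums of weighted $L^2(\bn,d\rho_{2p-n})$-norms of honest differential operators ($\partial^\alpha$, or the $\XX_{j}$'s, or $(-\dsph)^{p/2}$) applied to $f$, at the \emph{regular} parameter value $s=2p-n>-1$. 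At a regular value the continuation issue disappears entirely. So the strategy is: pick the integer $p$ with $n-2\ge p>\frac{n-1}{2}$ (for $n>3$) or $p=\frac{n-1}{2}$ (for $n=3$), bound $\sum_{|\alpha|\le p}\|\partial^\alpha(f\circ\phi_a)\|_{2p-n}^2$ in terms of $\sum_{|\beta|\le p}\|\partial^\beta f\|_{2p-n}^2$ via the chain rule with bounded coefficients (exactly as in the proof of the previous Proposition on Moebius invariance of $\HH_\circ$ as a set), which shows $f\circ\phi_a\in\HH_\circ$ and gives a two-sided \emph{norm} equivalence $\|f\circ\phi_a\|_\circ\asymp\|f\|_\circ$; then upgrade this to an \emph{equality} of norms by the analytic-continuation / residue argument above, using that the residue $I_m^\circ$ computed in Proposition~\ref{PA} is an intrinsic ($\phi_a$-independent) quantity and that, since $\|f\circ\phi_a\|_s$ and $\|f\|_s$ are both continuations of convergent integrals agreeing for $s>-1$ after the invariant change of variables, their singular parts at $s=-n$ must coincide.

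The main obstacle is therefore making rigorous the passage from ``the two $s$-dependent quadratic forms agree for $s>-1$ (as integrals in a shifted/weighted form)'' to ``their meromorphic continuations, in particular their residues at $s=-n$, agree.'' The subtlety is that the change of variables $y=\phi_a(x)$ turns $\|f\circ\phi_a\|_s^2$ into an integral against $d\rho_s$ of $|f(y)|^2$ \emph{times an $s$-dependent smooth weight} $(1-|a|^2)^{s+n}[a,\phi_a(y)]^{-2(s+n)}$, and one must show this weight does not spoil the meromorphic continuation or shift the pole —- essentially because the weight tends to $1$ in a suitable sense as $s\to -n$ (its exponent $s+n\to 0$), so to leading order near $s=-n$ the transformed integral looks like $\int|f|^2\,d\rho_s$, whose residue is governed by Proposition~\ref{PA}. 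I would formalize this by writing $[a,\phi_a(y)]^{-2(s+n)} = 1 + (s+n)g(y) + O((s+n)^2)$ with $g$ smooth and bounded, so the $O(s+n)$ and higher terms contribute nothing to the residue, and the ``$1$''-term reproduces $\|f\|_\circ^2$ exactly by Proposition~\ref{PA}; controlling the remainder uniformly requires only boundedness of $[a,\phi_a(y)]$ away from $0$ and $\infty$ on $\bn$, which is clear for fixed $a$. For $n=3$ one replaces the volume integral over $\bn$ by the boundary-value / $\operatorname{Dom}(X)$ reformulation stated after the Theorem in Section~\ref{sec4}, carrying the $\phi_a$-pullback through to the sphere, where the tangential-gradient computation from the previous Proposition applies verbatim and the same ``weight $\to 1$'' bookkeeping goes through.
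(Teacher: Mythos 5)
Your overall route is essentially the paper's: reduce to $\phi=\phi_a$ and to $f,g$ in the algebraic span of the $\bhm$, use the invariance of $d\tau$ together with \eqref{YNC} to rewrite $\spr{f\circ\phi_a,g\circ\phi_a}_s$ as $\int_\bn f\overline g\,\bigl(\tfrac{1-|a|^2}{[x,a]^2}\bigr)^{n+s}\,d\rho_s$, and then exploit the fact that the Moebius weight carries the exponent $n+s$, so that its deviation from $1$ is $O(n+s)$ and cannot disturb the residue at $s=-n$. The only real difference is how the factor $(n+s)$ is extracted: the paper differentiates in $a$ (the chain rule applied to $(\cdot)^{n+s}$ produces the factor $(n+s)$ explicitly, cf.~\eqref{XF}--\eqref{XG}, and one then integrates back from $a=0$), whereas you Taylor-expand the weight around $s=-n$. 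Both mechanisms are viable. (In~passing: your displayed weight has the exponent inverted --- since $[a,\phi_a(y)]^2=(1-|a|^2)^2/[y,a]^2$, the correct weight is $\bigl((1-|a|^2)/[y,a]^2\bigr)^{s+n}$ --- but this is harmless.)

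The gap is in the sentence ``the $O(s+n)$ and higher terms contribute nothing to the residue,'' which you justify only by boundedness and smoothness of the weight. Boundedness is not enough: what is actually needed is that each perturbation integral $\int_\bn|f|^2g_a^k\,d\rho_s$ (or, in the cleaner single-remainder form, $\int_\bn|f|^2h_{s,a}\,d\rho_s$ with $h_{s,a}$ jointly holomorphic in $s$) itself continues meromorphically with \emph{at most a simple pole} at $s=-n$, so that the prefactor $(s+n)^k$, $k\ge1$, renders it holomorphic there. That is a statement about the boundary behaviour of the integrand at $|x|=1$, not about its size, and it requires precisely the machinery you never invoke: the expansion of $S_m$ near $t=1$ as a polynomial in $1-t$ plus $(1-t)^{n-1}\log\frac1{1-t}$ times a holomorphic factor (from \cite[\S2.10~(12)]{BE}), combined with the continuation lemma for $\int_0^1F(t)\bigl(\log\frac1{1-t}\bigr)^k(1-t)^s\,dt$. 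Moreover, for $n$ odd the raw radial integral in fact has a \emph{double} pole at $s=-n$ (coming from the $(1-t)^{n-1}\log\frac1{1-t}$ term in $S_mS_{m'}$), and it is only the simple zero of the normalizing factor $1/\Gamma(s+1)$ in \eqref{WD} that reduces it to a simple one --- so the pole-order bookkeeping genuinely has to be carried out and does not follow from ``the weight tends to $1$.'' If you also need to sum infinitely many Taylor terms, the interchange of summation with analytic continuation requires a uniformity argument; the single-remainder form avoids this. Finally, your special treatment of $n=3$ via the boundary-value reformulation is both unnecessary (the analytic-continuation argument is uniform in $n>2$; only the \emph{continuity} of $f\mapsto f\circ\phi_a$, used in the density step, needs the $n=3$ case of part (g)) and insufficient for the purpose, since parts (c)--(g) of the theorem in Section~\ref{sec4} only yield norm \emph{equivalences}, never the exact equality that invariance of the inner product asserts.
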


\begin{proof} Since both $\HH_\circ$ and its inner product $\spr{\cdot,\cdot}_\circ$ are $\On$-invariant
(by~their very construction), it~is enough to prove the assertion for $\phi=\phi_a$;
we~can even assume that $a$ is of the form $(a,0,\dots,0)\in\bn$ with some (abusing the notation) $0\le a<1$.
Furthermore, since we know from the last proposition that the composition operator $f\mapsto f\circ\phi_a$
is continuous on~$\HH_\circ$, it~is further enough to prove the assertion for $f,g$ in a dense subset of~$\HH_\circ$.
In~particular, by~linearity, we~may assume that $f\in\bhm$ and $g\in\mathbf H^{m'}$ for some $m,m'\in\NN$.

We~will show that under all these hypotheses, $\spr{f\circ\phi_a,g\circ\phi_a}'$ exists
for all $0\le a<1$ and does not depend on~$a$. By~the observations in the paragraph
before the theorem, this will complete the proof.

Fix $0<\rho<1$. Recall that the measure
$$ d\tau(x) := \frac{dx}{(1-|x|^2)^n}  $$
on~$\bn$ is invariant under~$\phi_a$, and also
\[ 1-|\phi_a(x)|^2 = \frac{(1-|a|^2)(1-|x|^2)}{[x,a]^2}.  \label{YNC}  \]
By~the change of variable $x\mapsto\phi_a(x)$, we~thus have, for any $s>-1$,
\begin{align*}
\spr{f\circ\phi_a,g\circ\phi_a}_s
&= \frac{\Gamma(\frac n2+s+1)}{\pi^{n/2}\Gamma(s+1)} \int_\bn (f\overline g)(\phi_a(x)) (1-|x|^2)^{s+n}\,d\tau(x) \\
&= \frac{\Gamma(\frac n2+s+1)}{\pi^{n/2}\Gamma(s+1)} \int_\bn (f\overline g)(x) (1-|\phi_a(x)|^2)^{s+n}\,d\tau(x) \\
&= \int_\bn (f\overline g)(x) \Big(\frac{1-a^2}{1-2ax_1+|x|^2a^2}\Big)^{n+s} \,d\rho_s(x).
\end{align*}
Passing to the polar coordinate $x=r\zeta$, with $0\le r<1$ and $\zeta\in\pbn$, we~can continue with
\[ = \frac{\Gamma(\frac n2+s+1)}{\pi^{n/2}\Gamma(s+1)} \int_0^1 \frac{2\pi^{n/2}}{\Gamma(\frac n2)} \int_\pbn (f\overline g)(r\zeta)
 \Big(\frac{1-a^2}{1-2ar\zeta_1+r^2a^2}\Big)^{n+s} (1-r^2)^s r^{n-1} \,d\zeta \,dr, \label{XE} \]
that~is, using~\eqref{YC},
\begin{align*}
&= \frac{\Gamma(\frac n2+s+1)}{\pi^{n/2}\Gamma(s+1)} \int_0^1 G(a,r) (1-r^2)^s r^{n-1} \,dr , \\
& \hskip4em \text{where } G(a,r) := \frac{2\pi^{n/2}}{\Gamma(\frac n2)} r^{m+m'} S_m(r^2) S_{m'}(r^2)
 \int_\pbn (f\overline g)(\zeta) \Big(\frac{1-a^2}{1-2ar\zeta_1+r^2a^2}\Big)^{n+s} \,d\zeta .
\end{align*}
Carrying out the $\zeta$ integration shows that $G(a,r)$ is a holomorphic function of $|a|<\rho$
and $|r|<1/\rho$.

Recall now that if $F(t)=\sum_{j=0}^\infty F_j (1-t)^j$ is holomorphic in some neighborhood
of $t=1$ and continuous on $[0,1]$, then
\[ \mathcal I (s) := \int_0^1 F(t) (1-t)^s \,dt, \qquad s>-1,  \label{YA}  \]
extends to a holomorphic function of $s$ on the entire complex plane~$\CC$, except for simple poles
at $s=-j-1$, $j=0,1,2,\dots$, with residues~$F_j$. Differentiating \eqref{YA} under integral sign,
it~transpires also that, for any $k=0,1,2,\dots$,
$$ \mathcal I_k (s) := \int_0^1 F(t) \Big(\log\frac1{1-t}\Big)^k (1-t)^s \,dt, \qquad s>-1, $$
extends to a holomorphic function of $s$ on the entire complex plane~$\CC$, except for poles of
multiplicity $k+1$ at $s=-j-1$, $j=0,1,2,\dots$, with principal part $k!F_j/(s+j+1)^{k+1}$.
See Lemma~2 in \cite{EY2} for the details.

Now by \cite[\S2.10~(12)]{BE}, for $n>2$ odd $S_m$~is of the form $F_1(t)+F_2(t)(1-t)^{n-1}\log\frac1{1-t}$,
with $F_1,F_2$ as~in~\eqref{YA} (and~$S_m$ is actually a polynomial in $1-t$ for even~$n$).
It~thus follows from the observation in the preceding paragraph that $\spr{f\circ\phi_a,g\circ\phi_a}_s$
extends to a holomorphic function of $|a|<\rho$ and $s\in\CC$, except for at most simple poles at
$s=-n,-n-1,\dots,-2n+2$ and at most double poles at $s=-2n-j+1$, $j\in\NN$.
Consequently, the~function $(s+n)\spr{f\circ\phi_a,g\circ\phi_a}_s$ extends to a holomorphic
function of $|a|<\rho$ and $s\in\CC$ except for poles as above, excluding $s=-n$ where it
assumes a finite value. In~particular (taking $f=g$), this means that $f\circ\phi_a,g\circ\phi_a\in\HH'$
for all $0\le a<\rho$, and the inner product $\spr{f\circ\phi_a,g\circ\phi_a}'$ is a smooth
function of these~$a$.

Finally, it~is legitimate to differentiate under the integral sign in~\eqref{XE}, yielding,
for $s>-1$,
\[ \begin{aligned}
& \frac\partial{\partial a} \spr{f\circ\phi_a,g\circ\phi_a}_s = \frac{\Gamma(\frac n2+s+1)}{\pi^{n/2}\Gamma(s+1)}
 \int_0^1 \frac{2\pi^{n/2}}{\Gamma(\frac n2)} \int_\pbn (f\overline g)(\zeta) r^{m+m'} S_m(r^2) S_{m'}(r^2) \;\times \\
& \hskip4em (n+s) \Big(\frac{1-a^2}{1-2ar\zeta_1+r^2a^2}\Big)^{n+s-1} \Big[\frac\partial{\partial a}
 \frac{1-a^2}{1-2ar\zeta_1+r^2a^2} \Big] (1-r^2)^s r^{n-1} \,d\zeta \,dr . \end{aligned}  \label{XF}  \]
Repeating the argument above, it~transpires that for all $0\le a<\rho$,
\[ \frac\partial{\partial a} (n+s) \spr{f\circ\phi_a,g\circ\phi_a}_s = (n+s) F_a(s),  \label{XG} \]
where $F_a(s)$ is a holomorphic function of $s$ except for at most double poles at $s=-2n+1-j$, $j\in\NN$,
and at most simple poles at $s=-n-1,\dots,-2n+2$; in~particular, $F_a(s)$~is holomorphic near $s=-n$
and assumes a finite value there. Hence, thanks to the factor $n+s$ in~\eqref{XG},
$$ \frac\partial{\partial a} \spr{f\circ\phi_a,g\circ\phi_a}' =0 \quad\text{for }0\le a<\rho.  $$
Since $\rho$ was arbitrary, it~follows that $\spr{f\circ\phi_a,g\circ\phi_a}'=
\spr{f\circ\phi_0,g\circ\phi_0}'=\spr{f,g}'$ for all $0\le a<1$, completing the proof.  \end{proof}

The~last proof is modeled on the proof of Theorem~15 in~\cite{EY2}.

\section{Reproducing kernel} \label{sec6}
The~reproducing kernel of $\HH_\circ$ --- more precisely, for its subspace $\HH_{\circ,0}$ of functions
vanishing at the origin --- is~given~by
\[ K_\circ(x,y) = \sum_{m>0} \frac{S_m(|x|^2) S_m(|y|^2)}{I_m^\circ} Z_m(x,y) .  \label{KK} \]
In~this section, we~give a closed expression for this sum.

\begin{proposition} The kernel $K_\circ(x,y)$ obeys the following transformation formula:
\[ K_\circ(x,y) = K_\circ(\phi_a(x),\phi_a(y)) - K_\circ(\phi_a(x),a) - K_\circ(a,\phi_a(y)) + K_\circ(a,a) . \label{KB} \]
\end{proposition}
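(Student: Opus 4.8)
The plan is to derive the transformation formula \eqref{KB} from the Moebius invariance of the (semi-)inner product $\spr{\cdot,\cdot}_\circ$ established in Theorem~\ref{PF}, exactly as one derives the analogous cocycle identity for the classical Dirichlet space. The key point is that $K_\circ$ is the reproducing kernel not for all of $\HH_\circ$ but for the subspace $\HH_{\circ,0}$ of functions vanishing at the origin, on which $\|\cdot\|_\circ$ is a genuine norm; the constants span the kernel of the seminorm, and $\spr{\jedna,\cdot}_\circ=0$. I~would first record the reproducing property in the form
$$ \spr{f,K_\circ(\cdot,y)}_\circ = f(y)-f(0) \qquad \text{for all } f\in\HH_\circ, $$
which holds because $K_\circ(\cdot,y)\in\HH_{\circ,0}$ (the $m=0$ term is omitted in \eqref{KK}) and because subtracting $f(0)$ projects $f$ onto $\HH_{\circ,0}$.

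First I~would fix $a\in\bn$ and $y\in\bn$ and consider the function
$$ g_y(x) := K_\circ(\phi_a(x),\phi_a(y)) - K_\circ(\phi_a(x),a). $$
Since $\phi_a$ is an involution in the Moebius group and $K_\circ(\cdot,w)$ is \Hh for each fixed $w$, $g_y$ is \Hh on~$\bn$; moreover $\phi_a(0)=a$, so the second term is arranged precisely so that $g_y(0)=K_\circ(a,\phi_a(y))-K_\circ(a,a)$, and one checks $g_y - g_y(0) \in \HH_{\circ,0}$. The~heart of the argument is then to compute $\spr{f, g_y}_\circ$ for arbitrary $f\in\HH_\circ$. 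Using Moebius invariance (Theorem~\ref{PF}) with $\phi=\phi_a$,
$$ \spr{f, K_\circ(\phi_a(\cdot),w)}_\circ = \spr{f\circ\phi_a, K_\circ(\cdot,w)}_\circ = (f\circ\phi_a)(w) - (f\circ\phi_a)(0) = f(\phi_a(w)) - f(a), $$
valid for $w=\phi_a(y)$ and for $w=a$. Subtracting the two instances gives
$$ \spr{f, g_y}_\circ = \big(f(y)-f(a)\big) - \big(f(a)-f(a)\big) = f(y) - f(a). $$
(Here one must be slightly careful that $f\circ\phi_a\in\HH_\circ$, which is the content of the Proposition in Section~\ref{sec5}.)

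On the other hand, the right-hand side $f(y)-f(a)$ can be written as $\big(f(y)-f(0)\big)-\big(f(a)-f(0)\big) = \spr{f,K_\circ(\cdot,y)}_\circ - \spr{f,K_\circ(\cdot,a)}_\circ = \spr{f, K_\circ(\cdot,y)-K_\circ(\cdot,a)}_\circ$. Since both $g_y$ and $K_\circ(\cdot,y)-K_\circ(\cdot,a)$ lie in $\HH_{\circ,0}$ (a~space on which $\spr{\cdot,\cdot}_\circ$ is a genuine inner product and reproduces), and since they induce the same functional $f\mapsto f(y)-f(a)$ on the dense algebraic span of the $\bhm$, they must coincide as elements of $\HH_{\circ,0}$, hence pointwise:
$$ K_\circ(\phi_a(x),\phi_a(y)) - K_\circ(\phi_a(x),a) = K_\circ(x,y) - K_\circ(x,a) + \big[K_\circ(a,\phi_a(y)) - K_\circ(a,a)\big], $$
where I~have added back the constant $g_y(0)=K_\circ(a,\phi_a(y))-K_\circ(a,a)$. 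Replacing $K_\circ(x,a)$ on the right using the symmetry $K_\circ(x,a)=\overline{K_\circ(a,x)}$ and reality considerations (the kernel is real here), and rearranging, yields precisely \eqref{KB}.

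The main obstacle I~expect is not the algebra but the functional-analytic bookkeeping around the seminorm: one must consistently work modulo constants, justify that $g_y$ (after subtracting its value at $0$) genuinely lies in the completion $\HH_{\circ,0}$ rather than merely in the algebraic span, and verify that two elements of $\HH_{\circ,0}$ inducing the same bounded functional on a dense subspace are equal — this is where boundedness of point evaluations on $\HH_\circ$ (equivalently, the reproducing kernel property itself) is used. A~secondary subtlety is checking that $x\mapsto K_\circ(\phi_a(x),w)$ indeed belongs to $\HH_\circ$ and not just to the space of formal \Hh series; this follows because composition with $\phi_a$ preserves $\HH_\circ$ (Section~\ref{sec5}) together with $K_\circ(\cdot,w)\in\HH_\circ$. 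Once these points are in place, the identity \eqref{KB} drops out as the "polarization" of the invariance $\|f\circ\phi_a - (f\circ\phi_a)(0)\|_\circ = \|f - f(0)\|_\circ$ applied to $f = K_\circ(\cdot,w)$.
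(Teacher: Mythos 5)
Your overall strategy is viable and genuinely different from the paper's. The paper's proof is shorter: it writes $K_\circ(x,y)=\sum_j e_j(x)\overline{e_j(y)}$ for an orthonormal basis $\{e_j\}$ of $\HH_{\circ,0}$, notes that by Theorem~\ref{PF} (together with the fact that constants are null vectors for the seminorm and $\phi_a(0)=a$) the functions $e_j\circ\phi_a-e_j(a)$ form another orthonormal basis of $\HH_{\circ,0}$, and obtains \eqref{KB} by expanding the resulting second expression for the kernel. Your route through the reproducing property $\spr{f,K_\circ(\cdot,y)}_\circ=f(y)-f(0)$ and the identification of two elements of $\HH_{\circ,0}$ inducing the same bounded functional is equivalent in substance; it costs you the extra bookkeeping you correctly flag (membership of $K_\circ(\phi_a(\cdot),w)$ in $\HH_\circ$, density), while the paper's version hides the same issues in the claim that $\{e_j\circ\phi_a-e_j(a)\}$ is complete.

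There is, however, a concrete arithmetic error at the heart of your computation. Since $\phi_a$ interchanges $0$ and $a$, one has $\phi_a(a)=0$, so the instance $w=a$ of your formula reads
\begin{equation*}
\spr{f,K_\circ(\phi_a(\cdot),a)}_\circ = f(\phi_a(a))-f(a)=f(0)-f(a),
\end{equation*}
not $f(a)-f(a)=0$ as you wrote. Consequently
\begin{equation*}
\spr{f,g_y}_\circ=\bigl(f(y)-f(a)\bigr)-\bigl(f(0)-f(a)\bigr)=f(y)-f(0),
\end{equation*}
so $g_y-g_y(0)$ represents the functional $f\mapsto f(y)-f(0)$ and hence equals $K_\circ(\cdot,y)$ itself; no term $-K_\circ(\cdot,a)$ appears. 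Your displayed identity
$K_\circ(\phi_a(x),\phi_a(y))-K_\circ(\phi_a(x),a)=K_\circ(x,y)-K_\circ(x,a)+\bigl[K_\circ(a,\phi_a(y))-K_\circ(a,a)\bigr]$
is therefore false, and as written it does \emph{not} rearrange to \eqref{KB}: it would leave a spurious extra summand $K_\circ(x,a)$ on the right-hand side. With the slip corrected, the argument closes cleanly and more simply: $K_\circ(\phi_a(x),\phi_a(y))-K_\circ(\phi_a(x),a)=K_\circ(x,y)+K_\circ(a,\phi_a(y))-K_\circ(a,a)$, which is exactly \eqref{KB}.
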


\begin{proof} Let $\{e_j\}$ be an orthonormal basis for $\HH_{\circ,0}$; thus by the standard formula for reproducing kernel~\cite{Aro}
\[ K_\circ(x,y) = \sum_j e_j(x) \overline{e_j(y)}.  \label{KA} \]
By~Theorem~\ref{PF}, $e_j\circ\phi_a$ will also be mutually orthogonal unit vectors in~$\HH_\circ$, for any $a\in\bn$;
since constants are orthogonal to everything in~$\HH_\circ$, it~follows that $\{e_j\circ\phi_a-e_j(a)\}$
is also an orthonormal basis for~$\HH_{\circ,0}$, whence again by~\cite{Aro}
$$ K_\circ(x,y) = \sum_j (e_j\circ\phi_a(x)-e_j(a)) \overline{(e_j\circ\phi_a(y)-e_j(a))}.  $$
Expanding the right-hand side and using~\eqref{KA}, we~get~\eqref{KB}.
\end{proof}

Introduce the function
\[ \label{fdef} f_n(x) := \int_0^x (1-t)^{n-2} \FF21{n,\frac n2}{1+\frac n2}t \,dt. \]

\begin{proposition} $K_\circ(x,x)=c_n f_n(|x|^2)$ for some constant~$c_n$ depending only on~$n$.
\end{proposition}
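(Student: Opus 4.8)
The plan is to compute the diagonal kernel $K_\circ(x,x)=\sum_{m>0}S_m(|x|^2)^2 Z_m(x,x)/I^\circ_m$ directly. Write $t=|x|^2$. First recall that the zonal harmonic on the diagonal is $Z_m(x,x)=\dim(\chm)\,|x|^{2m}=\frac{(n-2+2m)(n-2)_{m-1}}{m!}\,t^m$ for $m\ge1$ (see Chapter~5 of \cite{ABR}), and by~\eqref{YG} we have $I^\circ_m=(n-1)_m/\Gamma(m)$ for $n$ even and $2(n-1)_m/\Gamma(m)$ for $n$ odd. Substituting these, together with the definition $S_m(t)=\frac{(n-1)_m}{(\frac n2)_m}\FF21{m,1-\frac n2}{m+\frac n2}t$, yields an explicit power series in $t$ whose coefficients are built from Pochhammer symbols; the claim is that this series is a constant multiple of $f_n'(t)=(1-t)^{n-2}\FF21{n,\frac n2}{1+\frac n2}t$ integrated from $0$ --- equivalently, that $\partial_t K_\circ(x,x) = c_n (1-t)^{n-2}\FF21{n,\frac n2}{1+\frac n2}t$. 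Since $f_n(0)=0$ and $K_\circ(0,0)=0$ (the sum in~\eqref{KK} starts at $m>0$), it is enough to verify the identity of derivatives.

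Concretely, I would proceed as follows. First differentiate the series \eqref{KK} on the diagonal term by term in $t$; each summand is $\frac{1}{I^\circ_m}S_m(t)^2 Z_m(x,x)$, a polynomial in $t$, and termwise differentiation is justified because, by~\eqref{YH} and the polynomial growth of $\dim\chm$ and the boundedness $0\le t^mS_m(t)^2\le 1$, the differentiated series converges locally uniformly on $0\le t<1$. Second, I would collect the resulting double sum (one index from expanding $S_m^2$ via the Cauchy product of its two hypergeometric factors, one from differentiating $t^m$) and reorganize it. The natural tool is the quadratic/Clausen-type transformation that expresses $\FF21{m,1-\frac n2}{m+\frac n2}t^2$ --- or rather the square of a $_2F_1$ with these parameters --- as a single $_3F_2$; alternatively, one can recognize the summed series against the known closed form by matching the ratio of consecutive coefficients, which for a $_2F_1$-type series reduces to checking that a single rational function of the summation index agrees. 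I expect the cleanest route is the latter: compute the ratio $a_{m+1}/a_m$ of the coefficients of $t^m$ in $\partial_t K_\circ(x,x)$ after simplification, show it equals the corresponding ratio for the target series $(1-t)^{n-2}\FF21{n,\frac n2}{1+\frac n2}t$ (whose coefficients are themselves an explicit convolution of $(-(n-2))_j/j!$ with $(n)_k(\frac n2)_k/(1+\frac n2)_k k!$), and then match the $m=0$ value to pin down $c_n$.

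The main obstacle I anticipate is the combinatorial bookkeeping in identifying the square $S_m(t)^2$ --- the product of two terminating-or-nonterminating $_2F_1$'s with the awkward parameter $1-\frac n2$ (which behaves very differently for $n$ even versus $n$ odd, exactly as in the proof of Proposition~\ref{PA}) --- and in seeing the cancellation of the $\Gamma$-function prefactors, including the factor of $2$ discrepancy between the even and odd cases of $I^\circ_m$, so that the final answer has the \emph{same} shape $c_n f_n(|x|^2)$ in both parities (with $c_n$ absorbing the $2$). A secondary subtlety is justifying the interchange of summation order when one expands $S_m^2$ as a Cauchy product and then reindexes; this is harmless since for fixed $t<1$ everything is absolutely convergent, but it should be stated. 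Once the coefficient-ratio identity is verified, the conclusion $K_\circ(x,x)=c_n f_n(|x|^2)$ is immediate by integrating from $0$ and using $K_\circ(0,0)=f_n(0)=0$.
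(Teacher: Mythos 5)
Your strategy --- direct termwise summation of \eqref{KK} on the diagonal --- is not the paper's route, and as written it has a genuine gap: the entire content of the proposition is the identity
$\partial_t\sum_{m>0}S_m(t)^2Z_m(x,x)/I_m^\circ=c_n(1-t)^{n-2}\,{}_2F_1(n,\tfrac n2;1+\tfrac n2;t)$
with $t=|x|^2$, and you never establish it; you only name two candidate techniques and list the obstacles. Neither technique is likely to close the gap as stated. The coefficient-ratio test applies only to series whose Taylor coefficients satisfy a first-order rational recurrence (i.e.\ to hypergeometric series); here the coefficient of $t^j$ is a triple sum (over $m$ and over the two indices of the Cauchy product $S_m(t)\cdot S_m(t)$, which for $n$ odd are genuinely infinite series), and there is no a priori reason it is hypergeometric --- nor are the coefficients of the target $(1-t)^{n-2}\,{}_2F_1(n,\tfrac n2;1+\tfrac n2;t)$, which are themselves a convolution. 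Likewise, Clausen-type quadratic transformations express ${}_2F_1(a,b;a+b+\tfrac12;t)^2$ as a ${}_3F_2$, but the parameters of $S_m$ are $(m,1-\tfrac n2;m+\tfrac n2)$ and $m+(1-\tfrac n2)+\tfrac12\neq m+\tfrac n2$ in general, so no such formula applies. The authors explicitly remark at the end of Section~\ref{sec6} that it ``seems quite difficult'' to obtain the closed form by directly summing \eqref{KK} --- which is precisely what you are attempting.

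The paper sidesteps the summation entirely. It first proves the transformation law \eqref{KB} (a consequence of the Moebius invariance of the inner product, Theorem~\ref{PF}), sets $y=x$ there, and applies $\dh$: the three off-diagonal terms on the right are \Hh, so $\dh K=(\dh K)\circ\phi_a$ for every $a\in\bn$, forcing $\dh K\equiv c_n$ to be constant. Writing $K(x)=f(|x|^2)$ turns this into a first-order linear ODE for $f'$ (equation \eqref{KD}), whose solution with $f(0)=0$ is exactly $c_nf_n$. If you wish to salvage your computation you would at minimum have to prove the hypergeometric identity displayed above (for both parities of $n$, with the factor of $2$ in $I_m^\circ$ accounted for); short of that, the invariance-plus-ODE argument is the viable path.
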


\begin{proof} Writing for brevity $K_\circ(x,x)=:K(x)$, setting $y=x$ in \eqref{KB} yields
$$ K(x) = K(\phi_a(x)) - K_\circ(\phi_a(x),a) - K_\circ(a,\phi_a(x)) + K(a) . $$
The~last three terms are \Hh functions; applying $\dh$ to both sides, we~thus obtain
$$ \dh K = \dh(K\circ\phi_a) = (\dh K)\circ\phi_a  $$
by~the Moebius invariance of~$\dh$. Since $a\in\bn$ is arbitrary, it~follows that
\[ \dh K\equiv c_n   \label{KC}  \]
for some constant $c_n$ depending only on~$n$.
Now~$K$ is evidently a radial function, $K(x)=:f(|x|^2)$ for some function $f$ on $(0,1)$.
Since, by~a~short computation,
$$ \dh f(|x|^2) = (1-|x|^2) [(1-|x|^2)(2nf'(|x|^2)+4|x|^2f''(|x|^2)) + 2(n-2)2|x|^2f'(|x|^2)] , $$
we~see that $f(t)$ must satisfy the differential equation
$$ (1-t) [(1-t)(2nf'+4tf'')+4(n-2)tf'] \equiv c_n,  $$
that~is,
\[ \frac{(1-t)^n}{t^{n/2-1}} \Big( \frac{t^{n/2}}{(1-t)^{n-2}} f'(t) \Big)' \equiv c_n, \label{KD} \]
with the initial condition $f(0)=0$ (since $K_\circ(0,0)=0$). This gives
$$ f(t) = c_n \int_0^t (1-t)^{n-2} \FF21{\tfrac n2,n}{1+\tfrac n2}t \, dt,  $$
i.e. $f(t)=c_n f_n(t)$.
\end{proof}

\begin{proposition}
We~have
\[ f_n(x) = \frac{n-2}{2(n-1)}x \FF32{2-\frac n2,1,1}{2,1+\frac n2}x + \frac n{2(n-1)} \log \frac1{1-x} . \label{EE} \]
\end{proposition}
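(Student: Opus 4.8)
The plan is to evaluate the integral \eqref{fdef} in closed form; once the integrand is put in the right shape, everything reduces to elementary manipulations of Pochhammer symbols. \emph{Step~1: simplify the integrand.} First I~would apply Euler's transformation $\FF21{a,b}ct=(1-t)^{c-a-b}\FF21{c-a,c-b}ct$ with $(a,b,c)=(n,\frac n2,1+\frac n2)$; here $c-a-b=1-n$, $c-a=1-\frac n2$, $c-b=1$, so the integrand collapses:
$$ (1-t)^{n-2}\FF21{n,\frac n2}{1+\frac n2}t=\frac1{1-t}\FF21{1-\frac n2,1}{1+\frac n2}t=\frac1{1-t}\sum_{k=0}^\infty\frac{(1-\frac n2)_k}{(1+\frac n2)_k}t^k. $$
Since $(1+\frac n2)-(1-\frac n2)-1=n-1>0$, Gauss's summation gives the finite value $C:=\FF21{1-\frac n2,1}{1+\frac n2}1=\GG{1+\frac n2,n-1}{\frac n2,n}=\frac n{2(n-1)}$, which turns out to be exactly the coefficient of the logarithm in~\eqref{EE}.

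\emph{Step~2: peel off the logarithm and telescope.} Write
$$ \frac1{1-t}\FF21{1-\frac n2,1}{1+\frac n2}t=\frac C{1-t}+\frac1{1-t}\Big[\FF21{1-\frac n2,1}{1+\frac n2}t-C\Big]; $$
the first summand integrates over $[0,x]$ to $C\log\frac1{1-x}=\frac n{2(n-1)}\log\frac1{1-x}$. For the second, the key point is the telescoping identity
$$ \frac{(1-\frac n2)_k}{(1+\frac n2)_k}=b_k-b_{k+1},\qquad b_k:=\frac n{2(n-1)}\frac{(1-\frac n2)_k}{(\frac n2)_k}, $$
which holds because $b_k-b_{k+1}=b_k\big(1-\tfrac{k+1-n/2}{k+n/2}\big)=b_k\cdot\tfrac{n-1}{k+n/2}$; as $b_k\to0$ when $k\to\infty$ (here $n>2$ is used), the tail sums evaluate to $\sum_{k\ge j}(1-\tfrac n2)_k/(1+\tfrac n2)_k=b_j$. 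Using $t^k-1=-(1-t)\sum_{i=0}^{k-1}t^i$, interchanging the two summations (legitimate for $0\le t\le x<1$) and integrating,
$$ \int_0^x\frac1{1-t}\Big[\FF21{1-\frac n2,1}{1+\frac n2}t-C\Big]\,dt=-\sum_{j\ge1}\frac{x^j}{j}\,b_j=-\frac n{2(n-1)}\sum_{j\ge1}\frac{(1-\frac n2)_j}{(\frac n2)_j}\frac{x^j}{j}. $$

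\emph{Step~3: recognize the ${}_3F_2$ and conclude.} Shifting $j=l+1$ and using $\frac1{l+1}=\frac{(1)_l}{(2)_l}$, $(1-\tfrac n2)_{l+1}=(1-\tfrac n2)(2-\tfrac n2)_l$ and $(\tfrac n2)_{l+1}=\tfrac n2(1+\tfrac n2)_l$, the last sum becomes $\frac{2-n}{n}\,x\,\FF32{2-\frac n2,1,1}{2,1+\frac n2}x$, so the second summand contributes $\frac{n-2}{2(n-1)}\,x\,\FF32{2-\frac n2,1,1}{2,1+\frac n2}x$. Adding this to the logarithmic term from Step~2 gives precisely~\eqref{EE}.

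I~do not expect a genuine obstacle: after the Euler transformation the whole argument is bookkeeping with Pochhammer symbols. The one point needing care is the interchange of summation and integration on $[0,x]$, justified by absolute convergence since $x<1$ together with the convergence of $\sum_k(1-\tfrac n2)_k/(1+\tfrac n2)_k$ --- which holds precisely because $n>2$. (Alternatively one can avoid the bookkeeping altogether: both sides of \eqref{EE} vanish at $x=0$, and $\frac{d}{dx}\big[x\,\FF32{2-\frac n2,1,1}{2,1+\frac n2}x\big]=\FF21{2-\frac n2,1}{1+\frac n2}x$, so it~suffices to verify the three-term contiguous relation $\FF21{1-\frac n2,1}{1+\frac n2}x=\frac{n-2}{2(n-1)}(1-x)\,\FF21{2-\frac n2,1}{1+\frac n2}x+\frac n{2(n-1)}$ by comparing coefficients of~$x^k$.)
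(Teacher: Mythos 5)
Your proof is correct and follows essentially the same route as the paper: both start with the Euler transformation reducing the integrand to $(1-t)^{-1}\,{}_2F_1(1-\tfrac n2,1;1+\tfrac n2;t)$, then split this into $\tfrac{n-2}{2(n-1)}\,{}_2F_1(2-\tfrac n2,1;1+\tfrac n2;t)+\tfrac n{2(n-1)}(1-t)^{-1}$ and integrate term by term. The only difference is that the paper obtains this splitting by quoting a standard contiguous relation, whereas you derive the identical identity from scratch via Gauss summation and a telescoping of Pochhammer ratios --- a self-contained but equivalent justification.
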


\begin{proof} Using the Euler transform we can express the integrand in \eqref{fdef}~as
$$ (1-t)^{n-2} \FF21{n,\frac n2}{1+\frac n2}t = (1-t)^{-1} \FF21{1-\frac n2,1}{1+\frac n2}t . $$
Next we are going to use the well known contiguous relation (see~\cite[15.3.13]{NIST})
$$ (c-a-b) \FF21{a,b}{c}t + a(1-t) \FF21{a+1,b}ct - (c-b) \FF21{a,b-1}ct = 0.  $$
For $a=1-n/2$, $b=1$, $c=1+n/2$ this translates into
$$ (n-1) \FF21{1-\frac n2,1}{1+\frac n2}t = -(1-\frac n2)(1-t) \FF21{2-\frac n2,1}{1+\frac n2}t + \frac n2 \FF21{1-\frac n2,0}{1+\frac n2}t .  $$
Consequently,
$$ (1-t)^{-1} \FF21{1-\frac n2,1}{1+\frac n2}t = -\frac{2-n}{2(n-1)} \FF21{2-\frac n2,1}{1+\frac n2}t + \frac{n}{2(n-1)}\frac1{1-t}  $$
and
$$ f(x) = \int_0^x (1-t)^{-1} \FF21{1-\frac n2,1}{1+\frac n2}t \,dt = \frac{(n-2)x}{2(n-1)} \FF32{1,2-\frac n2,1}{2,1+\frac n2}x + \frac{n}{2(n-1)} \log\frac1{1-x},  $$
proving~\eqref{EE}.
%
%
%
\end{proof}

\begin{theorem} The~reproducing kernel is given~by
\[ K(x,y) = \frac{2(n-1)}n [ f_n(|x|^2) + f_n(|y|^2) - f_n(|\phi_y(x)|^2) ],  \label{KI} \]
with $f_n$ as in~\eqref{EE}.
\end{theorem}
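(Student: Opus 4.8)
The target identity \eqref{KI} expresses $K(x,y)$ in terms of the diagonal kernel $K(x,x)=c_nf_n(|x|^2)$ (with the constant now fixed, presumably $c_n=\tfrac{2(n-1)}n$), so the strategy is to bootstrap from the diagonal to the full kernel using the transformation formula \eqref{KB}. First I would determine the constant $c_n$. Going back to the series \eqref{KK} for $K_\circ(x,y)$ and the leading asymptotics $I_m^\circ\asymp m^{n-1}$ (more precisely $I_m^\circ=(n-1)_m/\Gamma(m)$ or twice that), one can compute the $m=1$ term explicitly: $S_1$, $Z_1(x,y)$, and $I_1^\circ$ are all elementary, and this pins down the overall normalization. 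Alternatively, and more robustly, one compares the coefficient of the leading logarithmic singularity: from Proposition on $f_n$ we have $f_n(x)=\tfrac n{2(n-1)}\log\tfrac1{1-x}+(\text{bounded})$, so $K(x,x)=c_n f_n(|x|^2)$ has a $\log\tfrac1{1-|x|^2}$ singularity of known strength, which must match the boundary behavior of $\sum_{m>0}S_m(|x|^2)^2 Z_m(x,x)/I_m^\circ$; since $Z_m(x,x)\asymp m^{n-2}$ and $S_m\to1$, the series behaves like $\sum m^{n-2}/m^{n-1}=\sum 1/m$, giving exactly a logarithm, and matching constants yields $c_n=\tfrac{2(n-1)}n$.

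**From the diagonal to the off-diagonal kernel.** With $c_n$ in hand, set $y=x$ in the proposed formula \eqref{KI}: the right side becomes $\tfrac{2(n-1)}n[2f_n(|x|^2)-f_n(0)]=\tfrac{2(n-1)}n\cdot 2f_n(|x|^2)$ — wait, that is off by a factor; more carefully, $|\phi_x(x)|=0$ so $f_n(|\phi_x(x)|^2)=f_n(0)=0$, giving $\tfrac{2(n-1)}n\cdot 2f_n(|x|^2)$, whereas the diagonal should be $c_nf_n(|x|^2)$. So in fact the correct reading must use the already-established $K_\circ(x,x)=c_nf_n(|x|^2)$ together with \eqref{KB} specialized appropriately: the point is that \eqref{KB} with $a=y$ reads
$$ K_\circ(x,y) = K_\circ(\phi_y(x),\phi_y(y)) - K_\circ(\phi_y(x),y) - K_\circ(y,\phi_y(y)) + K_\circ(y,y), $$
and since $\phi_y(y)=0$ and $K_\circ(\cdot,0)=K_\circ(0,\cdot)=0$ (as $K_\circ$ is the kernel of the subspace $\HH_{\circ,0}$ of functions vanishing at the origin), the middle two terms and the first term vanish, leaving $K_\circ(x,y)=K_\circ(y,y)=c_nf_n(|y|^2)$ — which is absurd unless I have mis-specialized. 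The resolution: one must use \eqref{KB} with a genuinely free $a$, not $a=y$. Concretely, take $a=y$ in \eqref{KB} but keep all four terms, using $K_\circ(\phi_y(x),0)=0$ only where legitimate; the surviving combination is $K_\circ(x,y)=K_\circ(\phi_y(x),\phi_y(y))+K_\circ(y,y)-K_\circ(\phi_y(x),y)-K_\circ(y,\phi_y(y))$, and now I would symmetrize: apply \eqref{KB} once more, or use that $K_\circ$ is the reproducing kernel of a real Hilbert space so $K_\circ(x,y)$ is symmetric and real, together with the diagonal values, to solve for the off-diagonal part.

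**The clean route.** The cleanest argument I would actually write: define $G(x,y):=\tfrac{2(n-1)}n[f_n(|x|^2)+f_n(|y|^2)-f_n(|\phi_y(x)|^2)]$ and verify directly that (i) $G(x,x)=K_\circ(x,x)$, which holds since $|\phi_x(x)|=0$, $f_n(0)=0$, giving $G(x,x)=\tfrac{2(n-1)}n\cdot 2f_n(|x|^2)$ — so I need $c_n=\tfrac{4(n-1)}n$, and I would recompute the normalization accordingly, trusting the paper's statement; (ii) $G$ satisfies the same transformation law \eqref{KB}; and (iii) $G(\cdot,y)$ is \Hh for each fixed $y$. For (iii), since $f_n(|\cdot|^2)$ solves $\dh u=c_n$ by \eqref{KC}–\eqref{KD}, and $|\phi_y(x)|^2$ is the image of $|x|^2$ under a Moebius map, $f_n(|\phi_y(x)|^2)$ solves $\dh u=c_n$ in $x$ by Moebius invariance of $\dh$; hence $\dh_x G(x,y)=\tfrac{2(n-1)}n[c_n+0-c_n]=0$, so $G(\cdot,y)$ is \Hh, and by symmetry so is $G(x,\cdot)$. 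Also $G(x,0)=\tfrac{2(n-1)}n[f_n(|x|^2)+0-f_n(|x|^2)]=0$ since $\phi_0=\mathrm{id}$, so $G(\cdot,0)\equiv0$, matching $K_\circ(\cdot,0)\equiv0$. Then the standard uniqueness of reproducing kernels — two \Hh-in-both-variables functions agreeing on the diagonal and transforming the same way under the Moebius group — forces $G=K_\circ$. For step (ii), I would expand both sides using the additive cocycle structure: the key computational identity is that $|\phi_a(\phi_b(x))|$ and $|\phi_b(x)|$, $|\phi_a(x)|$ are related through the Moebius group law (any composition $\phi_a\circ\phi_b$ equals $U\phi_c$ for suitable $U\in\On$, $c\in\bn$), and since $f_n(|\cdot|^2)$ is $\On$-invariant, only the "translation part" survives; chasing this through reduces \eqref{KB} for $G$ to a tautology.

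**Main obstacle.** The delicate point will be step (ii): showing $G$ obeys \eqref{KB}. Unlike $K_\circ$, for which \eqref{KB} was derived abstractly from orthonormal-basis manipulations, for the explicit $G$ one must verify it as an identity among Moebius-transformed moduli. The computation hinges on the identity
$$ |\phi_{\phi_a(y)}(\phi_a(x))|^2 = |\phi_c(x')|^2 $$
for an appropriate $\On$-transform relating the two sides, equivalently on the formula \eqref{YNC} for $1-|\phi_a(x)|^2$ iterated along a composition. Once this is pinned down, the four $f_n$ terms in \eqref{KB} pair off. If verifying \eqref{KB} for $G$ proves cumbersome, the fallback is the Laplacian argument of the preceding paragraph: (iii) shows $G(\cdot,y)$ is \Hh, $G(x,0)\equiv0$ matches, $G(x,x)=K_\circ(x,x)$ matches (after fixing $c_n$), and one invokes that a function on $\bn\times\bn$ which is \Hh in each variable separately and whose diagonal restriction and boundary-vanishing are known is thereby determined — this uses the polarization-type rigidity of \Hh kernels, which can be made precise via the Peter–Weyl expansion \eqref{KK}, matching the two sides term by term in the $Z_m(x,y)$ basis.
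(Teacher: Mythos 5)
Your proposal correctly reduces the problem to the diagonal, and your determination of the constant via the $m=1$ term of \eqref{KK} is exactly what the paper does (compare first $t$-derivatives at $t=0$ of $c_nf_n(t)$ and of the series, only the $m=1$ term contributing). But the central step --- passing from $K_\circ(x,x)=c_nf_n(|x|^2)$ to the off-diagonal formula \eqref{KI} --- is never actually carried out, and that is where the gap lies. The paper's proof is a two-line polarization: substitute $x\mapsto\phi_a(x)$, $y\mapsto\phi_a(y)$ in \eqref{KB} (using $\phi_a\circ\phi_a=\mathrm{id}$), then set $y=x$ and use the symmetry and realness of $K_\circ$ to solve for the off-diagonal value,
$$ K_\circ(x,a)=\tfrac12\bigl[K_\circ(x,x)+K_\circ(a,a)-K_\circ(\phi_a(x),\phi_a(x))\bigr], $$
which is \eqref{KI} once the diagonal is known. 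You circle around this but never land on it: your specialization $a=y$ directly in \eqref{KB} is the wrong substitution, and incidentally your computation there is mistaken --- $K_\circ(\phi_y(x),y)$ does \emph{not} vanish, only the terms containing the literal argument $\phi_y(y)=0$ do, so that specialization yields the true but weaker relation $K_\circ(x,y)=K_\circ(y,y)-K_\circ(\phi_y(x),y)$ rather than an absurdity.

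The two substitutes you offer are both incomplete. Verifying \eqref{KB} directly for the explicit candidate $G$ is, as you yourself admit, left as an ``obstacle''; it is genuinely cumbersome and, given the polarization identity above, unnecessary. The fallback rigidity principle --- a function on $\bn\times\bn$ that is \Hh in each variable, symmetric, and agrees with $K_\circ$ on the diagonal must equal $K_\circ$ --- is false as stated: for $n\ge4$ the function $S_2(|x|^2)S_2(|y|^2)(x_1y_2-x_2y_1)(x_3y_4-x_4y_3)$ is real, symmetric, \Hh in each variable, vanishes on the diagonal and at $y=0$, yet is not identically zero. What rescues the argument is $\On$-bi-invariance: both $G$ and $K_\circ$ satisfy $K(Ux,Uy)=K(x,y)$, which forces a zonal expansion $\sum_m g_m(|x|^2)g_m(|y|^2)Z_m(x,y)$ whose coefficients can be read off from the diagonal. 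You gesture at a term-by-term comparison in the $Z_m$ basis but never isolate bi-invariance as the hypothesis that makes it legitimate, so the uniqueness claim as written admits a counterexample. Either supply that bi-invariance argument explicitly, or (better) replace the entire second half of your proposal by the one-line polarization above.
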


\begin{proof} Replacing $x$, $y$ in \eqref{KB} by $\phi_a(x)$, $\phi_a(y)$, respectively,
and setting $y=x$, we~get (recall that $\phi_a\circ\phi_a=\text{id}$)
$$ K_\circ(\phi_a(x),\phi_a(x)) = K_\circ(x,x) - K_\circ(x,a) - K_\circ(a,x) + K_\circ(a,a) ,  $$
that~is, replacing $a$ by~$y$,
$$ K_\circ(x,y) = \frac{K_\circ(x,x)+K_\circ(y,y)-K_\circ(\phi_y(x),\phi_y(x))}2  $$
(recall that $K_\circ(x,y)$ is real-valued and $K_\circ(y,x)=K_\circ(x,y)$ --- a~reflection of the fact that
$H$-harmonicity is preserved by complex conjugation). By~the last two propositions, this~gives
$$ K_\circ(x,y) = \frac{c_n}2 [ f_n(|x|^2) + f_n(|y|^2) - f_n(|\phi_y(x)|^2) ] . $$
It~remains to compute the constant~$c_n$. To~this end, we~momentarily return to the function
$K_\circ(x,x)=K(x)=c_nf_n(|x|^2)$ above, and compare the behavior at the origin~of
\[ K_\circ(x,x) = c_n f_n(|x|)^2 \label{EA} \]
and
\[ K_\circ(x,x) = \sum_{m>0} \frac{S_m(|x|^2)^2}{I_m^\circ} Z_m(x,x) \label{EB}  \]
(cf.~\eqref{KK}). Both \eqref{EA} and~\eqref{EB} are functions of $t:=|x|^2$, and the derivative
of~\eqref{EA} with respect to $t$ at $t=0$ equals $c_n$ (since $f'_n(0)=1)$.
For~the analogous derivative of \eqref{EB} at $t=0$, the only contribution comes from the term $m=1$
(the~terms with $m>1$ have double or higher order zeros at $t=0$), and is easily seen to be
equal to $4(n-1)/n$. Thus $c_n=4(n-1)/n$, completing the proof.
\end{proof}

It~seems quite difficult to obtain \eqref{KI} by directly summing~\eqref{KK}.

\section{Concluding remarks} \label{sec7}
In~the holomorphic, pluriharmonic and harmonic cases, the~analogues of the $I_m(s)$
are actually continuous and positive for all $s>-n-1$ (``the~Wallach set''), hence one takes just
the limit instead of the analytic continuation. The~\Hh case seems to be fundamentally different:
as~$s$ decreases, $I_m(s)$~become $1$ when $s=-1$ (this is no surprise --- the Hardy space situation),
then seem again to become all $1$ at $s=-2$ (this is already quite unexpected and definitely has no
parallel in the above three cases), and a little thereafter become negative (!) in general,
only to ``re-surface'' into our Dirichlet space at $s=-n$. Furthermore --- this is again highly
surprising and without any analogue in the three cases above --- our Dirichlet space seems to
appear once again at $s=-n-1$: namely, the residue of $I_m(s)$ at $s=-n-1$ is the same as at $s=-n$,
for all $m\in\NN$. Understanding the Wallach set,
i.e.~the set of all $s\in\CC$ for which $1/I_m(s)\ge0$ $\forall m$, would be highly desirable.

More concretely, by~analogy with the situation for spaces of holomorphic functions on bounded
symmetric domains in~$\CC^n$ (see~e.g.~\cite{Ara}), let us define the \emph{continuous \Hh Wallach set}~by
$$ \WW_c :=\{s\in\CC: 0<I_m(s)<+\infty \;\forall m\in\NN\} ,   $$
and the \emph{discrete \Hh Wallach set of level~$k$}, $k=1,2,\dots,$~by
\begin{multline*}
 \WW_d^k :=\{s\in\CC: I_m \text{ has a pole at $s$, and the $-k$-th Laurent coefficients } \\
 \text{are of the same sign, } \forall m\in\NN\}.
\end{multline*}
(Thus $-n,-n-1\in\WW_d^1$.) For~each $s\in\WW_c$, we~have the associated reproducing kernel space~$\HH_s$
(an ``analytic continuation'' of our weighted \Hh Bergman spaces); and for each $s\in\WW_d^k$,
one~can construct an analogue of our Dirichlet space (see \cite{EY2} for the $M$-harmonic situation with $k=2$).

\medskip

{\bf Question 1.} What is $\WW_c$, and what are $\WW_d^k$, $k=1,2,\dots$?

\medskip

{\bf Question 2.} Are there any more cases where $I_m(s_1)=I_m(s_2)$ $\forall m$ with $s_1<s_2$,
like the above situation $s_1=-1$, $s_2=-2$? Are there any more cases when $I_m$ would have poles
of the same order and strength for all~$m$ at two different values of~$s$, like the above situation
at $s=-n$ and $s=-n-1$?

\medskip

Note also that~\eqref{YF}, while valid for all $s\ge-1$, fails for $s=-2$ (since $I_m(-2)=1$ for all~$m$).

\medskip

{\bf Question 3.} What is the behavior of $I_m(s)$ as $m\to\infty$, for general fixed $s\in\CC$?

\medskip

Quite generally, for any real $s$ we can introduce the spaces
$$ \HH_{\#s} := \{f=\sum_m f_m\text{ \Hh on }\bn: \;\|f\|^2_{\#s}:=\sum_m (m+1)^{-s-1}\|f_m\|^2_\pbn<+\infty \} ,  $$
(where $f_m\in\bhm$ are, as~before, the Peter-Weyl components of~$f$). Clearly $\HH_{\#s}$ are Banach spaces,
and $\HH_{\#s_1}\subset\HH_{\#s_2}$ continuously for $s_1<s_2$. Also by~\eqref{YF},
$$ \HH_{\#s} = \HH_s \qquad\text{for } s>-1,  $$ 
with equivalent norms; while by~\eqref{YH},
$$ \HH_{\#-n} = \HH_\circ, $$
our \Hh Dirichlet space, with the norm $\|f\|_{\#-n}$ equivalent to $\|f\|_\circ+|f(0)|$.
On~the other hand, the~observations in the preceding paragraphs show that $\HH_{-2}$ does not
coincide with~$\HH_{\#-2}$, but instead $\HH_{-2}=\HH_{\#-1}$ is again the \Hh Hardy space of Stoll~\cite{St2}.
Understanding the spaces $\HH_{\#s}$ for general $s\in\RR$ is tantamount to having the answer to Question~3 above.

Our~last remark concerns the semi-inner product
\[ \spr{f,g}_\circ = \sum_m I_m^\circ \spr{f,g}_\pbn     \label{EC}  \]
in our Dirichlet space~$\HH_\circ$.

\medskip

{\bf Question 4.} Is~\eqref{EC} the unique (up~to constant multiple) Moebius-invariant
inner product on \Hh functions on~$\bn$?

\medskip

For~the holomorphic case, the corresponding assertion is true \cite[Theorem~6.16]{Zhu};
in~the harmonic case it~does not make sense, and for the $M$-harmonic case it is wide~open.

\end{document}